\documentclass[12pt,reqno]{amsart}

\usepackage{stmaryrd}
\usepackage{txfonts}
\usepackage{amssymb}
\usepackage{amsfonts}
\usepackage{}

\usepackage{amssymb,amsmath,graphicx,
	amsfonts,euscript}
\usepackage{color}
\usepackage{cite}
\allowdisplaybreaks

\setlength{\textheight}{8.8in} \setlength{\textwidth}{5.9in}
\setlength{\oddsidemargin}{0.2in} \setlength{\evensidemargin}{0.2in}
\setlength{\parindent}{0.2in}
\setlength{\topmargin}{0.2in} \setcounter{section}{0}
\setcounter{figure}{0} \setcounter{equation}{0}

\newtheorem {theorem}{Theorem}[section]
\newtheorem {lemma}[theorem]{{\bf Lemma}}

\newtheorem {proposition}[theorem]{{\bf Proposition}}
\theoremstyle{remark}

\theoremstyle{plain} \numberwithin {equation}{section}

\def\ee{\epsilon}

\def\na{\nabla}
\def\th{\theta}
\newcommand{\p}{\partial}

\def\R{\mathbb{R}}
\def\N{\mathbb{N}}

\newcommand{\beq}{\begin{equation}}
\newcommand{\eeq}{\end{equation}}
\newcommand{\ben}{\begin{eqnarray}}
\newcommand{\een}{\end{eqnarray}}
\newcommand{\beno}{\begin{eqnarray*}}
\newcommand{\eeno}{\end{eqnarray*}}

\voffset=-0.2in
\numberwithin{equation}{section}
\subjclass[2010]{35A05, 35B35, 76D03}
\keywords{Background magnetic field; magnetohydrodynamic equation; partial dissipation; stability; decay rate}

\begin{document}

\title [Stability on the 3D MHD equtions]{Stability and large-time behavior on 3D incompressible MHD equations with partial dissipation near a background magnetic field}

\author[Lin, Wu, Zhu]{Hongxia Lin$^{1}$, Jiahong Wu$^{2}$ and Yi Zhu$^{3}$  }
	
	\address{$^{1}$ College of Mathematics and Physics,  and  Geomathematics Key Laboratory of Sichuan Province, Chengdu University of Technology, Chengdu 610059, P.R. China }
	
	\email{linhongxia18@126.com}

    \address{$^2$ Department of Mathematics, Oklahoma State University, Stillwater, OK 74078, United States}
	
	\email{jiahong.wu@okstate.edu}

    \address{$^{3}$ Department of Mathematics, East China University of Science and Technology, Shanghai, 200237, P. R. China}
	
	\email{Zhuyim@ecust.edu.cn}

\vskip .2in
\begin{abstract}
Physical experiments and numerical simulations have observed a remarkable stabilizing phenomenon:
a background magnetic field stabilizes and damps electrically conducting fluids. This paper intends 
to establish this phenomenon as a mathematically rigorous fact on a magnetohydrodynamic (MHD) system 
with anisotropic dissipation in $\mathbb R^3$. The velocity equation in this system is the 3D 
Navier-Stokes equation with dissipation only in the $x_1$-direction while the magnetic field obeys
the induction equation with magnetic diffusion in two horizontal directions. We establish that 
any perturbation near the background magnetic field $(0,1,0)$ is globally stable in the 
Sobolev setting $H^3(\R^3)$. In addition, explicit decay rates in $H^2(\R^3)$ are also obtained. 
When there is no presence of the magnetic field, the 3D anisotropic Navier-Stokes equation in $\mathbb R^3$
is not well understood and the small data global well-posedness   remains an intriguing open 
problem. This paper reveals the mechanism of how the magnetic field generates enhanced dissipation 
and helps stabilize the fluid. 
\end{abstract}
\maketitle

%\everymath{\displaystyle}
%\newcommand {\eqdef }{\ensuremath {\stackrel {\mathrm {def}}{=}}}

% math symbols

\def\aint{\dashint}

\def\arraystretch{2}
\def\eps{\varepsilon}

\def\s#1{\mathbb{#1}} % set
\def\t#1{\tilde{#1}} %new variables
\def\b#1{\overline{#1}}
\def\N{\mathcal{N}} %Nozzle
\def\M{\mathcal{M}} %Mach number
\def\R{{\mathbb{R}}}

\def\ba{\begin{aligned}}
\def\ea{\end{aligned}}
\def\be{\begin{equation}}
\def\ee{\end{equation}}

\def\cz#1{\|#1\|_{C^{0,\alpha}}}
\def\ca#1{\|#1\|_{C^{1,\alpha}}}
\def\cb#1{\|#1\|_{C^{2,\alpha}}}

\def\lb#1{\|#1\|_{L^2}}
\def\ha#1{\|#1\|_{H^1}}
\def\hb#1{\|#1\|_{H^2}}

\def\cin{\subset\subset}
\def\Ld{\Lambda}
\def\ld{\lambda}
\def\ol{{\Omega_L}}
\def\sla{{S_L^-}}
\def\slb{{S_L^+}}
\def\C{\mathbf{C}} %%unit cylinder
\def\cl#1{\overline{#1}}
\def\ra{\rightarrow}
\def\xra{\xrightarrow}
\def\g{\nabla}
\def\a{\alpha}
\def\b{\beta}
\def\d{\delta}
\def\th{\theta}
\def\fai{\varphi}
\def\O{\Omega}
\def\o{\omega}
\def\f{\frac}
\def\p{\partial}

\section{Introduction}

This paper deals with the stability and large-time behavior problem on a system of 3D anisotropic MHD equations near a background magnetic field. To shed some light on the potential 
difficulties of this problem, we briefly review several facts on the behavior of solutions to the Euler and the anisotropic Navier-Stokes equations.

\vskip .1in 
It is well-known that solutions of the incompressible Euler equations 
$$
\begin{cases}
	\p_t u + (u\cdot\na) u  =- \na P, \\
	\na\cdot u =0
\end{cases}
$$
can grow rather rapidly in time. In fact, Kiselev and Sverak are able to construct 
a vorticity solution of the 2D Euler equations in a disk whose gradient grows double exponentially in time \cite{Kis}. In the periodic setting, an example of Zlatos  
shows that the vorticity gradient can grow at least exponentially \cite{Zla}.  Choi and Jeong 
obtain linear in time growth for the vorticity gradient for certain smooth and compactly supported initial vorticity in $\mathbb R^2$ \cite{Jeong}.
Classical solutions to the 3D Euler equations could develop finite-time singularities (\cite{Chen,Elgindi}). Many more results in this direction can be found in a review paper by  Drivas and Elgindi \cite{Drivas}.
As a special consequence, perturbations governed by the Euler equations near the trivial solution
are generally not stable. How much dissipation does one really need in order to achieve the
stability? Adding the full Laplacian dissipation is certainly sufficient. As demonstrated by 
Schonbek and others (see, e.g., \cite{Sc0,Sc1, ScSc,Wie}), solutions of the Navier-Stokes equations 
$$
\begin{cases}
	\p_t u + (u\cdot\na) u  =- \na P + \mu \Delta u, \\
	\na\cdot u =0
\end{cases}
$$
are asymptotically stable and decay in time with explicit decay rates. When the dissipation is anisotropic and only in two directions, the Navier-Stokes equations become 
\beq\label{an}
\begin{cases}
	\p_t u + (u\cdot\na) u  =- \na P + \mu \Delta_h u, \\
	\na\cdot u =0.
\end{cases}
\eeq
where $\Delta_h =\p_1^2 + \p_2^2$ is the horizontal Laplacian. Due to its physical applications 
and special mathematical properties, (\ref{an}) has attracted considerable interests and an array of beautiful small data global well-posedness results have been obtained
(see, e.g., \cite{CDGG,CheZh,If,LiuZ, Pai1, Pai2, ZhangT1, ZhangT2}). 
New approaches have very recently been developed to tackle the large-time behavior problem and 
 explicit decay rates have been extracted for (\ref{an}) (see \cite{JiWuYang, XZ1}).  If we further reduce the dissipation to be in just one
direction, the resulting 3D anisotropic Navier-Stokes equations 
\beq\label{ns1}
\begin{cases}
	\p_t u + (u\cdot\na) u  =- \na P + \mu \p_1^2 u, \quad x\in \mathbb R^3, \,\,t>0,\\
	\na\cdot u =0
\end{cases}
\eeq
is not well-understood. In particular, the small data global well-posedness problem remains open.
In addition, very little is known on the stability properties and the large-time behavior. 

\vskip .1in 
This paper focuses on the following system of the 3D MHD equations with anisotropic dissipation
\begin{align}\label{MHD}
	\left\{
	\begin{array}{ll}
		\p_t u + (u\cdot\na) u  =- \na P + \mu \p_1^2u+(B\cdot\na) B, \ \ \ x\in{\R^3},\ t>0, \\
		\partial_{t}B+ (u\cdot\nabla) B=\eta\, \Delta_h B + (B\cdot\nabla) u,\\
		\nabla\cdot u=\nabla\cdot B=0
	\end{array}
	\right.
\end{align}
with the initial data
$$u(x,0)=u_0,\,\quad  B(x,0)=B_0. $$
Here $u=(u_1,u_2,u_3)^\top, B=(B_1, B_2, B_3)^\top$ and $P$ represent the velocity field of the fluid, the magnetic field and the scalar pressure, respectively. The constants $\mu>0$ and $\eta>0$ are the viscosity coefficient and the magnetic diffusivity. The MHD system (\ref{MHD})
focused here is relevant in the modeling of reconnecting plasmas (see, e.g., \cite{CLi1,CLi2,Pri}).

\vskip .1in 
The motivation for studying (\ref{MHD}) comes from two distinct sources. The first is the 
stabilizing phenomenon observed in physical experiments involving electrically conducting fluids.
The experiments exhibit a remarkable phenomenon: a background magnetic field actually stabilizes 
and damps turbulent MHD fluids (see, e.g., \cite{AMS, Alex, Alf,  Davi0, Davi1, DA, Gall, Gall2}). We intend to establish this phenomenon as a mathematically 
rigorous fact on (\ref{MHD}). The second is to initiate new strategies and develop innovative tools for stability and large-time behavior problems on anisotropic models.

\vskip .1in 
To understand the stabilizing mechanism of a background magnetic field
$$
u^{(0)} \equiv 0, \quad  B^{(0)} \equiv  e_2 := (0,1,0),
$$
which is obviously a steady-state of (\ref{MHD}), we study the dynamics of the perturbation 
$(u,b)$ with $b= B-B^{(0)}$.  Clearly $(u,b)$ satisfies the MHD equations
\begin{align}\label{MHD1}
	\left\{
	\begin{array}{ll}
		\p_t u + (u\cdot\na) u  =- \na P + \mu \p_1^2u+(b\cdot\na) b+\p_2b, \ \ \ x\in{\R^3},\ t>0, \\
		\partial_{t}b+ (u\cdot\nabla) b=\eta\, \Delta_h b + (b\cdot\nabla) u+\p_2u, \\
		\nabla\cdot u=\nabla\cdot b=0, \\
		u(x,0)=u_{0}(x), \qquad b(x,0) =b_0(x).
	\end{array}
	\right.
\end{align}
Our main result asserts the global well-posedness and stability of $(u,b)$, and provides precise 
decay rates for various Sobolev norms of $(u, b)$. The precise statement of these results is
presented in the following theorem. To simplify the notation, we use $\|f\|_{L^r L^q L^p}$ for  
the norm 
$\Big \|\,\,  \big\|\,\, \|f\|_{L^p(\R)}\big\|_{L^q(\R)} \Big \|_{L^r(\R)}$, and  $\|f\|_{L^q_{x_3}L^p_{x_1x_2}}$ for $\big\|\,\, \|f\|_{L^p_{x_1x_2}(\R^2)}\big\|_{L^q_{x_3}(\R)}$.

\begin{theorem}\label{main}
	Assume $(u_0,b_0)\in H^3(\R^3)$ with
	$\nabla\cdot u_0=0$ and $\nabla\cdot b_0=0$ satisfies
	$$
	(u_0,b_0),\,(\p_3u_0,\p_3b_0),\, (\p_3^2u_0,\p_3^2b_0)\in L^2_{x_3}L^1_{x_1x_2}(\R^3).
	$$
	Then there exists a sufficiently small constant $\delta >0$ such that, if
	\begin{align}\label{Ini}
		&\|(u_0,b_0)\|_{H^3(\R^3)}+\|(u_0,b_0)\|_{L^2_{x_3}L^1_{x_1x_2}(\R^3)}+\|(\p_3u_0,\p_3b_0)\|_{L^2_{x_3}L^1_{x_1x_2}(\R^3)}\nonumber\\
		&\quad +\|(\p_3^2u_0,\p_3^2b_0)\|_{L^2_{x_3}L^1_{x_1x_2}(\R^3)} \leq \delta,
	\end{align}
	then (\ref{MHD1}) admits a unique global solution $(u, b) \in C\big([0, \infty); H^3(\mathbb R^3)\big)$. In addition, $(u, b)$ is stable in the sense that, for an absolute constant $C>0$,
	\begin{align*}
		\|(u,b)(t)\|_{H^3(\R^3)}^2+\int_0^t\big(\|\p_1 u(\tau)\|_{H^3(\R^3)}^2+\|\p_2 u(\tau)\|_{H^2(\R^3)}^2+\|\na_h b(\tau)\|_{H^3(\R^3)}^2\big)\,d\tau\le C\delta^2
	\end{align*}
	for any $t>0$. 
	
	Furthermore, $(u,b)$ obeys the following time decay estimates, for $0<\varepsilon \le \f {1}{36}$, 
	\begin{align*}
		&\quad\|(u,b)\|_{L^2(\R^3)}\le C (1+t)^{-\f 12},\,\,
		\qquad \|(\na_h u,\na_h b)\|_{L^2(\R^3)}\le C (1+t)^{-1},\\
		&\quad \|(\p_3u,\p_3b)\|_{L^2(\R^3)}\le C (1+t)^{-\f 12+\varepsilon},
		\quad\|(\p_1\p_j u,\p_1\p_j b)\|_{L^2(\R^3)}\le C (1+t)^{-\f 54+\varepsilon},\,j=1, 2,\\
		&\quad \|(\p_1\p_3 u,\p_1\p_3 b)\|_{L^2(\R^3)}\le C (1+t)^{-1+\varepsilon},
		\quad \|(\p_2\p_j u,\p_2\p_j b)\|_{L^2(\R^3)}\le C (1+t)^{-\f 23+\varepsilon},\,j=2, 3,\\
		&\quad\|(\p_3^2 u,\p_3^2 b)\|_{L^2(\R^3)}\le C (1+t)^{-\f 14}.
	\end{align*}
\end{theorem}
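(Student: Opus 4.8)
The plan is to run a bootstrap argument on the energy functional
\[
E(t) = \sup_{0\le\tau\le t}\|(u,b)(\tau)\|_{H^3}^2 + \int_0^t\Big(\|\p_1 u\|_{H^3}^2 + \|\p_2 u\|_{H^2}^2 + \|\na_h b\|_{H^3}^2\Big)\,d\tau,
\]
and show $E(t)\le C_0\delta^2 + C_1 E(t)^{3/2}$, which for $\delta$ small closes to give $E(t)\le 2C_0\delta^2$ for all $t$, and hence (by a standard continuation argument together with local well-posedness in $H^3$) global existence. The first and most delicate step is the selection of the energy: the system \eqref{MHD1} has dissipation only in $\p_1 u$ and $\na_h b$, so the $\p_2 u$, $\p_3 u$ and $\p_3 b$ directions have no direct damping. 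The crucial observation, as in related background-magnetic-field works, is that the coupling terms $\p_2 b$ in the $u$-equation and $\p_2 u$ in the $b$-equation act, after a wave-equation-type manipulation (differentiating the $u$-equation in $t$ and substituting the $b$-equation), to generate the missing dissipation in the $\p_2 u$ direction at the cost of a degenerate weight. Concretely, one expects to add to the pure Sobolev energy a cross term of the form $\int \p_2 u\cdot b$ (and its higher-order analogues $\int \p_2\p^\alpha u\cdot\p^\alpha b$ for $|\alpha|\le 2$), whose time derivative produces $\|\p_2 u\|^2$ with a favorable sign while the error terms it introduces are controlled by the genuine dissipation $\|\p_1 u\|_{H^3}$, $\|\na_h b\|_{H^3}$ and are cubic in the energy.

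Second, I would carry out the nonlinear estimates for the $H^3$ energy inequality. Differentiating \eqref{MHD1} by $\p^\alpha$, $|\alpha|\le3$, taking the $L^2$ inner product with $(\p^\alpha u,\p^\alpha b)$ and summing, the linear coupling terms $\p_2 b$ and $\p_2 u$ cancel by the skew-symmetry of the coupling, leaving
\[
\frac12\frac{d}{dt}\|(u,b)\|_{H^3}^2 + \mu\|\p_1 u\|_{H^3}^2 + \eta\|\na_h b\|_{H^3}^2 = \sum (\text{triple products}).
\]
Each triple product must be bounded by $\delta^{1/2}$ times the dissipative norms. This is where the anisotropic Sobolev inequalities (the $L^rL^qL^p$ and $L^q_{x_3}L^p_{x_1x_2}$ norms highlighted before the theorem) and the hypothesis $(u_0,b_0),(\p_3 u_0,\p_3 b_0),(\p_3^2 u_0,\p_3^2 b_0)\in L^2_{x_3}L^1_{x_1x_2}$ enter: terms involving $u_3$ or $b_3$ or horizontal-free derivatives are handled by integrating by parts to move a $\p_1$ or $\na_h$ onto a factor that carries a dissipative norm, and by splitting $L^\infty$ bounds anisotropically so that at most one derivative-free, non-dissipated factor appears per term. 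I would also need the cross-term estimate, controlling $\frac{d}{dt}\int\sum_{|\alpha|\le2}\p_2\p^\alpha u\cdot\p^\alpha b$; its linear part gives $\|\p_2 u\|_{H^2}^2$ minus terms absorbable into $\eta\|\na_h b\|_{H^3}^2$ and the time derivative of lower-order energy, and its nonlinear part is again cubic.

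Third, for the decay rates I would use the now-established uniform bounds together with a refined bootstrap on weighted-in-time quantities. The strategy is to interpolate: the $L^2_{x_3}L^1_{x_1x_2}$ control of the data (propagated, via Duhamel, using the explicit decay of the anisotropic heat-type semigroups $e^{t(\mu\p_1^2)}$ and $e^{t\eta\Delta_h}$ acting in the relevant variables) yields algebraic decay of the low-order norms, and the uniform $H^3$ bound upgrades this to decay of derivatives by Gagliardo–Nirenberg interpolation between a decaying low norm and a bounded high norm. The $\varepsilon$-loss in exponents like $(1+t)^{-1/2+\varepsilon}$ for $\p_3 u,\p_3 b$ reflects that these components are only weakly damped — through the degenerate cross term — so one proves decay by setting up a system of coupled differential inequalities for the weighted norms $(1+t)^{a_j}\|\p^{(j)}(u,b)\|_{L^2}^2$ with carefully chosen exponents $a_j$, and absorbing the off-diagonal couplings using smallness and the slightly-less-than-optimal rates. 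The main obstacle, I expect, is precisely closing this coupled hierarchy of decay estimates: one must choose the exponents so that every error term decays strictly faster than the term it feeds into, and the degeneracy of the $\p_2 u$ dissipation (it is not $\|\p_2 u\|_{H^3}$ but only $\|\p_2 u\|_{H^2}$, and it comes weighted) forces the careful $\varepsilon$-budget seen in the statement. Verifying that the $\varepsilon\le\frac1{36}$ constraint is exactly what makes all the inequalities consistent is the technical heart of the argument.
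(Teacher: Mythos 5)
Your overall framework (bootstrap on an energy functional, a cross term $(\p_2\p^\alpha u,\p^\alpha b)$ for $|\alpha|\le 2$ to extract $\|\p_2 u\|_{H^2}^2$ from the wave structure) matches the paper's first stage. But there is a genuine gap in the architecture: you propose to close the $H^3$ energy inequality using \emph{only} the functional
$E_0(t)=\sup_{\tau\le t}\|(u,b)\|_{H^3}^2+\int_0^t(\|\p_1u\|_{H^3}^2+\|\p_2u\|_{H^2}^2+\|\na_hb\|_{H^3}^2)\,d\tau$,
and then derive the decay rates \emph{afterwards} as a second, separate step. This order of operations fails. In the estimate of $\int\p_3^3(u\cdot\na u)\cdot\p_3^3u\,dx$, after using $\p_3u_3=-\p_1u_1-\p_2u_2$ one is left with the term
\begin{equation*}
\int|\p_2u|\,|\p_3^3u|^2\,dx,
\end{equation*}
which carries three vertical derivatives on $u$ and only a single $\p_2$ on the remaining factor. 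Since the $\p_2$-dissipation supplied by the cross term is one derivative weaker than a true Laplacian (only $\|\p_2u\|_{H^2}^2$ is time-integrable, not $\|\p_2u\|_{H^3}^2$), no anisotropic H\"older/Gagliardo--Nirenberg splitting bounds the time integral of this term by powers of $E_0$ alone; your prescription of ``moving a $\p_1$ or $\na_h$ onto a dissipated factor'' has no derivative left to move here. The paper's resolution is to make the time-weighted quantities part of the bootstrap itself: it defines $E=E_0+E_1+E_2$, where $E_1$ contains $(1+\tau)\|\na_h(u,b)\|_{H^1}^2$ with its weighted dissipation and $E_2$ contains the full list of weighted $L^2$ norms appearing in the decay statement, and the bound for $\int_0^tI_{124}\,d\tau$ is of the form $E_2^{1/4}E_1^{1/8}E_0^{9/8}$. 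In other words, the decay rates are not a corollary of the global bound --- they are an indispensable input to closing the $H^3$ estimate, and all three pieces must be bootstrapped simultaneously.

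A secondary inaccuracy: for the decay stage you invoke the semigroups $e^{t\mu\p_1^2}$ and $e^{t\eta\Delta_h}$. The velocity's own dissipation $e^{-\mu\xi_1^2t}$ is one-dimensional and, with $L^2_{x_3}L^1_{x_1x_2}$ data, cannot produce the rate $(1+t)^{-1/2}$ for $\|u\|_{L^2}$. The correct linear analysis diagonalizes the full coupled system (including the $\p_2$ off-diagonal terms); the resulting kernels $\widehat Q_i$ decay like $e^{-c|\xi_h|^2t}$ on most of frequency space precisely because of the magnetic coupling, and a frequency-space decomposition is needed to handle the degenerate regions. Your instinct about the wave structure is right, but the two-dimensional horizontal smoothing for $u$ comes entirely from the coupling, not from $\mu\p_1^2$.
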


\vskip .1in 
Theorem \ref{main} rigorously confirms the smoothing and stabilizing effect of the magnetic field 
on the electrically conducting fluids. Without the magnetic field, the fluid motion is governed
by the 3D anisotropic Navier-Stokes equation \eqref{ns1} alone and whether or not the velocity
is stable in Sobolev spaces remains an outstanding open problem. When coupled with magnetic field,
Theorem \ref{main} ensures that any perturbation near a background magnetic is stable and decays to 
zero at explicit rates as $t\to\infty$. 

\vskip .1in 
We clarify the differences between  Theorem \ref{main} and some of the closely related results. 
Wu and Zhu \cite{WuZhu} solved the stability problem for the MHD system with horizontal dissipation 
$\Delta_h u$ and vertical magnetic diffusion $\p_3^2 b$. It appears that the situation considered
here is more difficult. This is due to the handling of the velocity nonlinearity $(u\cdot\na) u$. 
When the velocity dissipation is only in one direction, the triple-product term 
$((u\cdot\na) u, u)_{H^3}$ is much more difficult to control than any triple product terms 
generated for the MHD system considered in \cite{WuZhu}. In fact, this term is exactly the reason 
why the well-posedness problem on the 3D anisotropic Navier-Stokes (\ref{ns1}) is open. 
One main contribution of this paper is the handling of the Navier-Stokes nonlinearity 
when the dissipation of the velocity is only in a single direction. The smoothing and stabilizing effect of the magnetic field on the fluids, and the elaborate construction of 
time-weighted energy functional are the key ingredients of this successful story. We remark that 
there is a very large mathematical literature on the incompressible MHD equations. In particular, there have been substantial recent developments on the well-posedness and stability problems, and 
significant progress has been made (see, e.g., \cite{AP,BLW, BSS, Bee, CL,CRW, CW,CWJ, ChenZhang, Deng, DLW,DLW1, DZ, Duv, Fef, Fef1, Feng, HeXuYu, HuX, HuLin, HuWang, JiWu, JNiu, Lai1, Lai2, LTY, LJW, LXZ, LZ, LZ1, LinDu, Liu0, PZZ, Ren1, Ren2, ST, Shang, Tan, Wan, WeiZ, WeiZ2, Wu2018, WuWu, WuWuXu, XZ, XZZ, Yam1,Yam2,YangWu,  YuanZhao, Zhang, Zhangt, Zhou}).

\vskip .1in 
We explain the proof of Theorem \ref{main}. Due to the lack of velocity dissipation in two
directions, we take the functional setting to be the Sobolev space $H^3$ in order to guarantee
the uniqueness. The local existence follows from a standard procedure (see, e.g., \cite{MaBe}), so
we focus on the global {\it a priori} bounds of $(u, b)$. This is accomplished via the bootstrapping
 argument (see, e.g., \cite{Tao}). A crucial step is to construct a suitable energy
functional. Naturally it should include the $H^3$-norm together with the time integral pieces 
from the dissipative terms
$$
	E^{(1)}_0(t)=\sup_{0\le\tau\le t}\|(u(\tau),b(\tau))\|_{H^3}^2+\int_0^t \Big(\|\p_1 u(\tau)\|_{H^3}^2+\|\nabla_h b(\tau)\|_{H^3}^2\Big)d \tau.
$$
However, due to the lack of velocity dissipation in two directions, the triple product generated by
the nonlinearity, namely $((u\cdot\na)u, u)_{H^3}$ can not be bounded in terms of $E^{(1)}_0(t)$. The
most difficult piece is the following triple product 
$$
\int \p_3^3(u\cdot\na u)\cdot \p_3^3 u\, dx. 
$$
Here we have used $\int$ to denote the integral in $x$ over $\mathbb R^3$. To distinguish the 
derivatives in different directions, we further write it as 
\begin{align}\label{Exa}
	&\int \p_3^3(u\cdot\na u)\cdot \p_3^3 u\, dx\nonumber\\
	&= 3\int \p_3 u_h \cdot\na_h\p_3^2 u \cdot \p_3^3 u\, dx + 3\int \p_3^2 u_h \cdot\na_h\p_3 u \cdot \p_3^3 u\, dx +\int \p_3^3 u_h \cdot \na_h u \cdot \p_3^3 u \, dx\nonumber\\
	&\quad+3\int \p_3 u_3 \,\p_3^3 u \cdot \p_3^3 u\, dx + 3\int \p_3^2 u_3 \,\p_3^2 u \cdot \p_3^3 u\, dx +\int \p_3^3 u_3 \, \p_3 u \cdot \p_3^3 u \, dx.
\end{align}
Clearly we need to seek enhanced dissipation in the $x_2$ or the $x_3$ direction to complement the 
existing dissipation in the $x_1$-direction. The background magnetic field is along the $x_2$ direction and it is in this direction that the extra regularization is generated. Mathematically 
this is reflected in the wave structure. We explain this. To avoid unnecessary complications, we 
look at the linearized system of (\ref{MHD1}), namely 
\begin{equation}\label{lin}
\begin{cases} 
	\p_t u = \mu \p_2^2 u + \p_2 b, \\
	\p_t b = \eta \Delta_h b + \p_2 u, \\
	\na\cdot u =\na\cdot b=0. 
\end{cases}
\end{equation}
By differentiating (\ref{lin}) in $t$ and making several substitutions, (\ref{lin}) can be converted 
into the following system of wave equations 
\begin{equation}\label{wv}
	\begin{cases} 
		\p_{tt} u - (\mu \p_2^2 + \eta \Delta_h) \p_t u + \mu\eta \p_2^2 \Delta_h u -\p_2^2u =0, \\
		\p_{tt} b - (\mu \p_2^2 + \eta \Delta_h) \p_t b + \mu\eta \p_2^2 \Delta_h b -\p_2^2b =0, \\
		\na\cdot u =\na\cdot b=0. 
	\end{cases}
\end{equation}
(\ref{wv}) is a system of anisotropic and degenerate wave equations. In comparison with (\ref{lin}),
(\ref{wv}) exhibits much more smoothing and stabilizing properties. In particular, the two terms
$\p_2^2u$ and $\p_2^2b$ in (\ref{wv}), emerged from the interaction of the velocity and the magnetic
field,  generates the dissipation in the $x_2$-direction. This confirms the stabilizing effect of the background magnetic field. To include this regularizing property in the energy functional, 
we define 
$$
E^{(2)}_0(t)=\int_0^t \|\p_2 u(\tau)\|_{H^2}^2d \tau.
$$
We emphasize that the extra dissipative effect in the $x_2$-direction is one-derivative lower than 
what a standard dissipation term $\p^2_2 u$ provides. This is why this energy functional only allows
the time integrability of $\|\p_2 u\|_{H^2}^2$, not  $\|\p_2 u\|_{H^3}^2$. 
Combining $E^{(1)}_0$ and $E^{(2)}_0$ gives 
\begin{align*}
E_0(t)&= E^{(1)}_0 + E^{(2)}_0\\
&=\sup_{0\le\tau\le t}\|(u(\tau),b(\tau))\|_{H^3}^2+\int_0^t \Big(\|\p_1 u(\tau)\|_{H^3}^2
+\|\p_2 u(\tau)\|_{H^2}^2+\|\nabla_h b(\tau)\|_{H^3}^2\Big)d \tau.
\end{align*}
However, there are still two terms in (\ref{Exa}) (the third term and the fourth term) that can 
not be bounded in terms of $E_0(t)$. After invoking the divergence-free condition 
$\p_3 u_3 =-\p_1 u_1-\p_2 u_2$, these terms are reduced to the difficult term 
\begin{align}\label{Di}
	\int |\p_2 u|\, |\p_3^3 u|\, |\p_3^3 u| dx.
\end{align}
Due to the aforementioned weaker smoothing effect in the $x_2$-direction, (\ref{Di}) can not be bounded by $E_0(t)$. Extra maneuvers are necessary.   Our idea  
is to include two extra time-weighted energy functionals 
\begin{align*}
	E_1(t)=&\sup_{0\le\tau\le t}(1+\tau)\|(\na_hu(\tau),\na_hb(\tau))\|_{H^1}^2\\
	&+\int_0^t(1+\tau) \Big(\|\p_1\na_h u(\tau)\|_{H^1}^2 +\|\p_2\na_h u(\tau)\|_{L^2}^2+\|\na_h^2 b(\tau)\|_{H^1}^2\Big)d \tau,\nonumber\\
	E_2(t)=&\sup_{0\le\tau\le t}\Big(
	(1+\tau)\|(u(\tau),b(\tau))\|_{L^2}^2+(1+\tau)^2\|(\na_h u(\tau),\na_h b(\tau))\|_{L^2}^2\\
	&+(1+\tau)^{1-2\varepsilon }\|(\p_3u(\tau),\p_3 b(\tau))\|_{L^2}^2\nonumber\\
	&+\sum_{j=1}^{2}(1+\tau)^{\f 52-2\varepsilon}\|(\p_1\p_j u(\tau), \p_1\p_j b(\tau))\|_{L^2}^2\\
	&+\sum_{j=2}^{3}(1+\tau)^{\f 43-2\varepsilon}\|(\p_2\p_j u(\tau), \p_2\p_j b(\tau))\|_{L^2}^2\nonumber\\
	&+(1+\tau)^{2-2\varepsilon }\|(\p_1\p_3 u(\tau), \p_1\p_3 b(\tau))\|_{L^2}^2
	+(1+\tau)^{\f 12 }\|(\p_3^2 u(\tau), \p_3^2 b(\tau))\|_{L^2}^2. 
	\Big).
\end{align*}
We shall show that the inclusion of $E_1(t)$ and $E_2(t)$ enables us to bound the term in (\ref{Di}) 
suitably and thus establish a closed energy inequality. The definition of $E_2(t)$ is certainly 
not simple. It takes into account of the precise time decay rate of each norm involved in $E_2(t)$. 
We will resort to the integral representation of (\ref{MHD1}) and spectral analysis to control the 
terms in $E_2(t)$. Having obtained the necessary components of the energy functional, we sum them 
up to form our total energy functional 
\beq
E(t)=E_0(t)+E_1(t)+E_2(t).\nonumber\\
\eeq
Our main efforts are devoted to proving the following estimate 
\begin{align}
	E(t)\le C_1F(u_0,b_0)+ C_2\Big(E^{\f 32}(t)+E^2(t)\Big),\label{E}
\end{align}
where $C_1$ and $C_2$ are constants, and 
\begin{align*}
	F(u_0,b_0)&=\|(u_0,b_0)\|_{H^3}^2+\|(u_0,b_0)\|_{L^2_{x_3}L^1_{x_1x_2}}^2
	+\|(\p_3u_0,\p_3b_0)\|_{L^2_{x_3}L^1_{x_1x_2}}^2+\|(\p_3^2u_0,\p_3^2b_0)\|_{L^2_{x_3}L^1_{x_1x_2}}^2.
\end{align*}
Verifying (\ref{E}) is a very lengthy process. For the sake of clarity, we divide 
the whole process into the proofs of the following inequalities
\begin{align}
& E_0(t)\le C E(0)+ C E^\f 32(t), \label{e0b}\\
& E_1(t)\le CE(0)+CE_0(t)+ C E^{\f 32}(t), \label{e1b} \\
&E_2(t)\le  C \Big(E^{\f 32}(t)+ E^2(t)\Big)
+C\Big(\|(u_0,b_0)\|_{H^2}^2+\|(u_0,b_0)\|_{L^2_{x_3}L^1_{x_1x_2}}^2\nonumber\\
&\qquad\quad  +\|(\p_3u_0,\p_3b_0)\|_{L^2_{x_3}L^1_{x_1x_2}}^2+\|(\p_3^2u_0,\p_3^2b_0)\|_{L^2_{x_3}L^1_{x_1x_2}}^2\Big). \label{e2b}
\end{align}
To prove (\ref{e0b}), we realize that $E_0(t)$ consists of two different types of terms 
$E^{(1)}_0(t)$ and $E^{(2)}_0(t)$, as aforementioned. The boundedness of $E^{(2)}_0(t)$ relies on
the enhanced dissipation from the wave structure. Naturally the proof of (\ref{e0b}) is further split into two parts, 
\begin{align*}
	(\|u(t)\|_{H^3}^2+\|b(t)\|_{H^3}^2)+2\int_0^t \Big(\mu\|\p_1u(\tau)\|_{H^3}^2+\eta\|\na_h b(\tau)\|_{H^3}^2\Big)d\tau
	\le CE(0)+ C E^{\f 32}(t)
\end{align*}
and 
\begin{align*}
	&-(\p_2u(t), b(t))_{H^2}+\f 12\int_0^t \|\p_2u(\tau)\|_{H^2}^2
	-\int_0^t \Big(\|\p_2b(\tau)\|_{H^2}^2+(\mu^2+\eta^2)\|\Delta_h b(\tau)\|_{H^2}^2\Big)d\tau\nonumber\\
	&\quad\le CE(0)+C E^{\f 32}(t).
\end{align*}
The detailed estimates are provided in Section \ref{sec3}. To prove (\ref{e1b}), we also need to 
divide the terms in $E_1(t)$ into two parts, 
$$
\int_0^t (1+ \tau) \|\p_2 \na_h u(\tau)\|_{L^2}^2\,d\tau 
$$
and the rest of the terms. The regularization from the wave structure in (\ref{wv}) is 
used to gain the time integrability of the vertical derivative. More technical 
details are left in Section \ref{sec4}.

\vskip .1in 
The proof of (\ref{e2b}) is extremely elaborate and relies on the precise decay rates of the norms 
involved in $E_2(t)$. Direct energy estimates are not sufficient for this purpose. Instead we solve 
the system of linear equations (\ref{lin}) and recast the nonlinear system (\ref{MHD1}) into an
integral form. This form relies on three kernel functions. They are degenerate and anisotropic in the
frequency space. We first perform a detailed spectral analysis in suitably divided subdomains of the
frequency space to obtain sharp and precise upper bounds for the kernel functions. The terms in 
$E_2(t)$ are then estimated according to the orders and directions of their derivatives. After a lengthy process, we finally obtain (\ref{e2b}). 

\vskip .1in 
Once (\ref{E}) is at our disposal, a direct application of the bootstrapping argument yields the 
desired global bounds and Theorem \ref{main} then follows. 

\vskip .1in 
The rest of this paper is divided into four sections. Section \ref{sec2} applies the bootstrapping
argument to the a priori inequality (\ref{E}) to establish Theorem \ref{main}. In addition, several 
anisotropic inequalities for products and triple products are provided here as well. 
They will be used in the subsequent sections. Section \ref{sec3} details the proof of (\ref{e0b}). 
Section \ref{sec4} proves  (\ref{e1b}) while Section \ref{sec5} is devoted to (\ref{e2b}).

\vskip .3in
\section{Proof of Theorem \ref{main} and anisotropic Sobolev inequalities}
\label{sec2}

This section serves two purposes. The first is to prove Theorem \ref{main} by applying 
the bootstrapping argument to the {\it a priori} inequality in (\ref{E}). The second is to provide
anisotropic inequalities for several products and triple products, which will be used in the proofs
in subsequent sections.

\begin{proof}[Proof of Theorem \ref{main}] The local (in time) well-posedness of (\ref{MHD1}) 
	in $H^3$ can be shown via standard procedures (see, e.g., \cite{MaBe}). It suffices to 
	establish the global bounds stated in Theorem \ref{main} in order to obtain the 
	global existence. This is accomplished by applying the bootstrapping argument to (\ref{E}), 
	namely
\begin{align}
E(t)\le C_1F(u_0,b_0)+ C_2\Big(E^{\f 32}(t)+E^2(t)\Big),\label{E1}
\end{align}
where
\begin{align*}
F(u_0,b_0)&=\|(u_0,b_0)\|_{H^3}^2+\|(u_0,b_0)\|_{L^2_{x_3}L^1_{x_1x_2}}^2
 +\|(\p_3u_0,\p_3b_0)\|_{L^2_{x_3}L^1_{x_1x_2}}^2\\
 &\quad +\|(\p_3^2u_0,\p_3^2b_0)\|_{L^2_{x_3}L^1_{x_1x_2}}^2.
\end{align*}
A useful description of the bootstrapping argument can be found in \cite[p.21]{Tao}. 
In order to apply the bootstrapping argument, we make the ansatz that
\begin{align}
E(t)\le M:=\min\Big\{1,\f {1}{(4C_2)^2}\Big\}. \label{ans}
\end{align}
We then verify that $E(t)$ actually admits a smaller bound, 
\begin{align*}
E(t)\le \f {M}{2}.
\end{align*}
Inserting (\ref{ans}) in (\ref{E1}) and recalling the initial assumption (\ref{Ini}), we have 
\begin{align*}
E(t)&\le C_1F(u_0,b_0)+C_2\Big( M^\f 12 + M \Big)E(t)\\
    &\le C_1 \d^2 + 2C_2M^{\f 12}E(t)\\
    &\le C_1 \d^2 + \f 1 2 E(t),
\end{align*}
or
\begin{align*}
E(t)\le 2C_1 \d^2.
\end{align*}
If the initial data is sufficiently small, say
$$
\d^2 \le \f {M} {4C_1},
$$
then we derive
\begin{align*}
E(t)\le 2C_1\d^2 \le  \f {M}{2}.
\end{align*}
The bootstrapping argument then implies $T=\infty$ and asserts that for any time $t>0$,
\begin{align*}
E(t)\le C\d^2,
\end{align*}
which, in particular, implies the desired global bound on the solution $(u,b)$. As a 
consequence, we obtain the global existence of solutions. The uniqueness is obvious due to the 
high regularity of the solution. The global bound on $E_2(t)$ yields the desired decay rates 
stated in Theorem \ref{main}.  This completes the proof of Theorem {\ref{main}}.
\end{proof}

\vskip .1in 
In the second part of this section, we provide several anisotropic upper bounds for products and 
triple products. The bounds stated in the following lemma are powerful tools in controlling the 
nonlinearity in terms of the anisotropic dissipation. 

\begin{lemma}\label{lem21}
For some constants $C>0$, $i,j,k=1,2,3$ and $i\neq j\neq k$, we have
\begin{align}
&\int|fgh|\,dx\le C\| f\|_{L^2(\R^3)}^{\f 12}\|\p_1 f\|_{L^2(\R^3)}^{\f 12}\| g\|_{L^2(\R^3)}^{\f 12}\|\p_2 g\|_{L^2(\R^3)}^{\f 12}
\|h\|_{L^2(\R^3)}^{\f 12}\|\p_3 h\|_{L^2(\R^3)}^{\f 12},\label{Ine1}\\
&\int|fgh|\,dx\le C\|f\|_{L^2(\R^3)}^{\f 14}\|\p_if\|_{L^2(\R^3)}^{\f 14} \|\p_jf\|_{L^2(\R^3)}^{\f 14} \|\p_i\p_jf\|_{L^2(\R^3)}^{\f 14}\notag\\
&\qquad\qquad\qquad \times \|g\|_{L^2(\R^3)}^{\f 12} \|\p_kg\|_{L^2(\R^3)}^{\f12}\|h\|_{L^2(\R^3)}\label{Ine2},\\
&\quad\|fg\|_{L^2(\R^3)}
\le C\|f\|_{L^2(\R^3)}^{\f 14}\|\p_if\|_{L^2(\R^3)}^{\f 14}\|\p_jf\|_{L^2(\R^3)}^{\f 14}\|\p_i\p_jf\|_{L^2(\R^3)}^{\f 14}
     \|g\|_{L^2(\R^3)}^{\f 12}\|\p_kg\|_{L^2(\R^3)}^{\f 12},\label{Ine3}\\
&\quad\|f\, g\|_{L_{x_3}^2 L_{x_1x_2}^1}\le  C\|f\|_{L^2(\R^3)}^{\f 12} \|\p_3f\|_{L^2(\R^3)}^{\f 12}  \|g\|_{L^2(\R^3)}.\label{fg}
\end{align}
\end{lemma}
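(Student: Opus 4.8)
The plan is to establish each of the four inequalities in Lemma \ref{lem21} by the standard anisotropic Ladyzhenskaya--Gagliardo--Nirenberg machinery, namely repeated use of the one-dimensional Sobolev embedding $\|w\|_{L^\infty(\R)}\le C\|w\|_{L^2(\R)}^{1/2}\|\p w\|_{L^2(\R)}^{1/2}$ applied variable by variable, together with the Cauchy--Schwarz inequality in the remaining variables. I would treat \eqref{Ine1} first as the model case, then deduce the others by the same scheme with a different bookkeeping of derivatives.

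For \eqref{Ine1}, first I would bound $\int |fgh|\,dx$ by integrating successively. Write $\int_{\R^3}|fgh|\,dx \le \int_{\R^2}\big(\sup_{x_1}|f|\big)\int_\R |gh|\,dx_1\,dx_2dx_3$ after pulling $f$ out in $x_1$; apply the 1D embedding in $x_1$ to get $\sup_{x_1}|f| \le C\|f(\cdot,x_2,x_3)\|_{L^2_{x_1}}^{1/2}\|\p_1 f(\cdot,x_2,x_3)\|_{L^2_{x_1}}^{1/2}$, then Cauchy--Schwarz in $x_1$ on $\int|gh|\,dx_1$. Next pull $g$ out in $x_2$ and $h$ out in $x_3$ by the same device, each time using the 1D embedding in the relevant variable and Cauchy--Schwarz in the others; collecting the $L^2_{x_1x_2x_3}$ pieces via a final Cauchy--Schwarz (or Hölder with exponents $2,2,2$) in the full space produces exactly the six factors on the right. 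The only subtlety is the order of integration and making sure each $L^2$-in-one-variable factor gets completed to an $L^2(\R^3)$ norm; this is routine but must be done carefully so the exponents come out to $1/2$ each.

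For \eqref{Ine2}, the idea is the same except that $f$ is handled by a \emph{two}-variable embedding: using the 1D embedding in $x_i$ and then in $x_j$ (equivalently, $\|w\|_{L^\infty_{x_ix_j}}\le C\|w\|_{L^2}^{1/2}\|\p_iw\|_{L^2}^{1/2}\cdot(\text{same with }\p_j)^{1/2}\cdots$, giving the four quarter-powers $\|f\|^{1/4}\|\p_if\|^{1/4}\|\p_jf\|^{1/4}\|\p_i\p_jf\|^{1/4}$), then $g$ is handled by a single 1D embedding in $x_k$, and $h$ is simply kept in $L^2$ after Cauchy--Schwarz. Inequality \eqref{Ine3} is the $L^2$-version of \eqref{Ine2}: apply \eqref{Ine2} with $h$ replaced by $fg$ itself, or more directly bound $\|fg\|_{L^2}^2=\int f^2 g^2\,dx$ by putting $f^2$ through the mixed $L^\infty_{x_ix_j}$ estimate and $g^2$ through the $L^\infty_{x_k}$ estimate, then take square roots. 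Finally \eqref{fg} follows from $\|fg\|_{L^2_{x_3}L^1_{x_1x_2}}\le \|\,\|f\|_{L^2_{x_1x_2}}\|g\|_{L^2_{x_1x_2}}\,\|_{L^2_{x_3}}$ by Cauchy--Schwarz in $(x_1,x_2)$, followed by $\sup_{x_3}\|f\|_{L^2_{x_1x_2}}\le C\|f\|_{L^2}^{1/2}\|\p_3 f\|_{L^2}^{1/2}$ (the 1D embedding in $x_3$ applied to the function $x_3\mapsto\|f(\cdot,\cdot,x_3)\|_{L^2_{x_1x_2}}$, whose derivative is controlled by $\|\p_3 f\|_{L^2_{x_1x_2}}$) and then Cauchy--Schwarz in $x_3$.

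I do not anticipate a genuine obstacle here: all four bounds are variations on a single theme. The one place requiring care is the inner-product/chain-rule step in \eqref{fg}, where one differentiates $\|f(\cdot,\cdot,x_3)\|_{L^2_{x_1x_2}}$ in $x_3$ and must justify $\big|\p_{x_3}\|f(\cdot,\cdot,x_3)\|_{L^2_{x_1x_2}}\big|\le \|\p_3 f(\cdot,\cdot,x_3)\|_{L^2_{x_1x_2}}$; likewise in \eqref{Ine2}--\eqref{Ine3} one must be consistent about which variable each half- or quarter-power of a derivative comes from so that the final product of $L^2(\R^3)$ norms is exactly as stated. These are bookkeeping matters rather than mathematical difficulties.
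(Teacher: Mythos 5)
Your proposal is correct and follows essentially the same route as the paper: the paper simply cites \cite{WuZhu} for \eqref{Ine1}--\eqref{Ine2} (whose standard iterated one-dimensional-embedding proof you sketch accurately), and proves \eqref{Ine3} and \eqref{fg} exactly by the combination of the 1D Agmon inequality $\|w\|_{L^\infty(\R)}\le C\|w\|_{L^2(\R)}^{1/2}\|\p w\|_{L^2(\R)}^{1/2}$, Minkowski's inequality, and Cauchy--Schwarz that you describe. The only cosmetic difference is that for \eqref{Ine3} the paper first peels off $g$ in the $x_k$-direction and then estimates $\|f\|_{L^\infty_{x_ix_j}L^2_{x_k}}$ via Minkowski, whereas you put the mixed $L^\infty_{x_ix_j}$ bound on $f$ first; both orderings yield the stated exponents.
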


\begin{proof}
The first two estimates have been stated and proven in \cite{WuZhu}. Here we give the proof of (\ref{Ine3}) and (\ref{fg}).
Without loss of generality, we assume $i=2,j=3,k=1$ in (\ref{Ine3}). Now we prove (\ref{Ine3}).  By H\"{o}lder's inequality, for $l=1,2,3$, we have the simple fact
\begin{align}\label{f}
\|f\|_{L_{x_l}^{\infty}(\R)} \leq \sqrt{2} \|f\|_{L_{x_l}^2(\R)}^{\f 12} \|\partial_{l}f\|_{L_{x_l}^2(\R)}^{\f 1 2}.
\end{align}
By (\ref{f}), 
\begin{align*}
\|fg\|_{L^2(\R^3)}&\le \big\|\, \|f\|_{L_{x_1}^2} \|g\|_{L_{x_1}^{\infty}}\, \big\|_{L_{x_2x_3}^2}\\
&\le C\big\|\, \|f\|_{L_{x_1}^2} \|g\|_{L_{x_1}^{2}}^{\f 12} \|\p_1g\|_{L_{x_1}^{2}}^{\f 12}\, \big\|_{L_{x_2x_3}^2}\\
&\le C\|f\|_{L_{x_2x_3}^{\infty}L_{x_1}^2} \|g\|_{L^{2}(\R^3)}^{\f 12} \|\p_1g\|_{L^{2}(\R^3)}^{\f 12}.
\end{align*}
By Minkowski's inequality, (\ref{f}) and H\"{o}lder's inequality, 
\begin{align*}
\|f\|_{L_{x_2x_3}^{\infty}L_{x_1}^2}&\le \big\| \,\|f\|_{L_{x_2}^{\infty}}\, \big\|_{L_{x_1}^2 L_{x_3}^{\infty}}
 \le C\big\| \,\|f\|_{L_{x_2}^{2}}^{\f 12} \|\p_2f\|_{L_{x_2}^{2}}^{\f 12} \,\big\|_{L_{x_1}^2 L_{x_3}^{\infty}}\\
& \le C \Big\| \, \big\| \, \|f\|_ {L_{x_3}^{\infty}}\big\|_{L_{x_2}^{2}}^{\f 12} \, \big\|\, \|\p_2f\|_{ L_{x_3}^{\infty}}\big\|_{L_{x_2}^{2}}^{\f 12} \,\Big\|_{L_{x_1}^2}\\
& \le C\big\|\, \|f\|_{L_{x_3}^{\infty}} \, \big\|_{L_{x_1x_2}^2}^{\f 12}
       \big\| \, \|\p_2f\|_{L_{x_3}^{\infty}} \, \big\|_{L_{x_1x_2}^2}^{\f 12}\\
& \le C\|f\|_{L^2(\R^3)}^{\f 14}\|\p_3f\|_{L^2(\R^3)}^{\f 14}\|\p_2f\|_{L^2(\R^3)}^{\f 14}\|\p_2\p_3f\|_{L^2(\R^3)}^{\f 14}.
\end{align*}
Therefore, 
\begin{align*}
\|fg\|_{L^2(\R^3)}
\le C\|f\|_{L^2(\R^3)}^{\f 14}\|\p_2f\|_{L^2(\R^3)}^{\f 14}\|\p_3f\|_{L^2(\R^3)}^{\f 14}\|\p_2\p_3f\|_{L^2(\R^3)}^{\f 14}
     \|g\|_{L^2(\R^3)}^{\f 12}\|\p_1g\|_{L^2(\R^3)}^{\f 12}.
\end{align*}
To prove (\ref{fg}), we apply H\"{o}lder's inequality, Minkowski's inequality and (\ref{f}) to obtain
 \begin{align*}
 \|f\, g\|_{L_{x_3}^2L_{x_1x_2}^1}
 &\le C \Big\| \|f\|_{L_{x_1x_2}^2}\|g\|_{L_{x_1x_2}^2}\Big\|_{L_{x_3}^2}
 \le C\Big\|\, \|f\|_{L_{x_3}^{\infty}}\Big \|_{L_{x_1x_2}^2}\|g\|_{L^2(\R^3)}\\
 &\le  C\|f\|_{L^2(\R^3)}^{\f 12} \|\p_3f\|_{L^2(\R^3)}^{\f 12}  \|g\|_{L^2(\R^3)}.
 \end{align*}
This completes the proof of Lemma \ref{lem21}. 
\end{proof}

\vskip .3in
\section{Estimate for $E_0(t)$ }
\label{sec3}

This section is devoted to proving the {\it a priori} estimate (\ref{e0b}) for $E_0(t)$. More precisely, we prove the following proposition. We exploit the extra smoothing reflected
in the wave structure (\ref{wv}) to make up for the lack of vertical dissipation in the velocity
equation. The idea is to consider a  Lyapunov functional involving an inner product besides the 
standard $H^3$-norm.

\begin{proposition}\label{pro1}
Let $(u,b)$ be a solution of the system (\ref{MHD1}). Then, for some constant $C>0$, we have
\begin{align}\label{E0}
E_0(t)\le C E(0)+ C E^\f 32(t).
\end{align}
\end{proposition}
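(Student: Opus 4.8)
The plan is to split $E_0(t)=E_0^{(1)}(t)+E_0^{(2)}(t)$ exactly as indicated in the introduction and bound each piece by a separate energy identity. For $E_0^{(1)}(t)$, I would apply $\p^\alpha$ with $|\alpha|\le 3$ to both equations in (\ref{MHD1}), take the $L^2$ inner product with $\p^\alpha u$ and $\p^\alpha b$ respectively, sum over $\alpha$, and add. The linear coupling terms $(\p_2 b,u)_{H^3}$ and $(\p_2 u,b)_{H^3}$ cancel after integration by parts, the dissipative terms produce $2\mu\int_0^t\|\p_1 u\|_{H^3}^2+2\eta\int_0^t\|\na_h b\|_{H^3}^2$, and what remains is the collection of triple products coming from $(u\cdot\na)u$, $(u\cdot\na)b$, $(b\cdot\na)u$, $(b\cdot\na)b$ after distributing $\p^\alpha$ by the Leibniz rule. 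The goal is to bound every such triple product by $C\,E^{3/2}(t)$, using the anisotropic inequalities (\ref{Ine1})--(\ref{Ine3}) of Lemma \ref{lem21} to convert each factor into pieces controlled by $\|(u,b)\|_{H^3}$ together with one of the dissipative integrands $\|\p_1 u\|_{H^3}$, $\|\na_h b\|_{H^3}$, or (crucially) $\|\p_2 u\|_{H^2}$. The key structural point I would exploit is that whenever a "bad" factor $\p_3 u$ or $\p_3^3 u$ appears without an accompanying dissipation direction, one either moves a derivative onto another factor by integration by parts, or uses the divergence-free condition $\p_3 u_3=-\p_1 u_1-\p_2 u_2$ to trade the vertical derivative for a horizontal one — this is precisely what reduces the worst terms in (\ref{Exa}) to the form (\ref{Di}), and the $\|\p_2 u\|_{H^2}^2$ slot in $E_0^{(2)}(t)$ is what absorbs (\ref{Di}).

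For $E_0^{(2)}(t)=\int_0^t\|\p_2 u\|_{H^2}^2\,d\tau$, direct energy estimates fail because there is no $\p_2^2 u$ dissipation; instead I would follow the Lyapunov-functional idea hinted in the text and differentiate the mixed quantity $-(\p_2 u,b)_{H^2}$ in time. Using the equations (\ref{MHD1}) to substitute for $\p_t u$ and $\p_t b$, the term $-(\p_2\p_t u, b)_{H^2}$ contributes $-\|\p_2 u\|_{H^2}^2$ from the $\p_2 b$ term in the $u$-equation (after integrating the extra $\p_2$ by parts onto $b$), which is the sign we want; the other contributions are $(\p_2 u, \p_2 b)_{H^2}$, $(\mu^2+\eta^2)$-type terms involving $\|\Delta_h b\|_{H^2}$ and $\|\p_1\p_2 u\|_{H^2}$ coming from the dissipations, and nonlinear triple products. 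Integrating in time yields an inequality of the form
\begin{align*}
-(\p_2 u(t),b(t))_{H^2}+\tfrac12\int_0^t\|\p_2 u\|_{H^2}^2\,d\tau\le C E(0)+C\int_0^t\big(\|\p_2 b\|_{H^2}^2+\|\Delta_h b\|_{H^2}^2+\|\p_1\p_2 u\|_{H^2}^2\big)\,d\tau+CE^{3/2}(t),
\end{align*}
and the terms in the middle integral are all controlled by $E_0^{(1)}(t)$ (they are pieces of $\int_0^t\|\na_h b\|_{H^3}^2$ and $\int_0^t\|\p_1 u\|_{H^3}^2$), while $|(\p_2 u(t),b(t))_{H^2}|\le\tfrac14\|\p_2 u(t)\|_{H^2}^2+C\|b(t)\|_{H^2}^2$ — but since we cannot afford $\|\p_2 u(t)\|_{H^2}^2$ on the left, I would instead bound $|(\p_2 u,b)_{H^2}|\le C\|u\|_{H^3}\|b\|_{H^2}\le C E(t)$, which is harmless after the full sum. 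Adding a small multiple of this second identity to the first $H^3$ identity then produces a functional equivalent to $\|(u,b)\|_{H^3}^2$ whose time-integral part dominates all three dissipative quantities in $E_0(t)$, giving (\ref{E0}).

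The main obstacle is the nonlinear triple-product bookkeeping in the $E_0^{(1)}$ estimate — in particular controlling the genuinely anisotropic terms such as (\ref{Di}) and its relatives where no dissipation is directly available. The resolution hinges on being systematic about three moves: (i) integration by parts to relocate a derivative, (ii) the divergence-free relation to convert $\p_3 u_3$ into horizontal derivatives, and (iii) choosing the right splitting of the six $L^2$-exponents in Lemma \ref{lem21} so that each factor lands in a norm that is either part of $\sup_\tau\|(u,b)\|_{H^3}^2$ or part of one of the three dissipation integrals (with $\|\p_2 u\|_{H^2}$ reserved for the terms that have an honest $\p_2 u$ factor). One must also be careful that the weaker, one-derivative-deficient $x_2$-smoothing means $\|\p_2 u\|_{H^2}$, not $\|\p_2 u\|_{H^3}$, is available, so any term requiring three horizontal derivatives on a $\p_2 u$ factor must instead be handled by the $\p_1$ dissipation or by $E_1,E_2$; verifying that every term in (\ref{Exa}) and its $b$-analogues falls into an admissible category is the crux of the argument.
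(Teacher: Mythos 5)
Your overall architecture matches the paper's: the same two-part Lyapunov functional $\|(u,b)\|_{H^3}^2+\lambda(\p_2u,b)_{H^2}$, the same cancellation of the linear coupling, the same use of Lemma \ref{lem21} and of $\p_3u_3=-\p_1u_1-\p_2u_2$ to reduce the worst terms of (\ref{Exa}), and the same mechanism (the cross term, with the $\p_2u$ forcing in the $b$-equation producing $+\|\p_2u\|_{H^2}^2$ and the resulting $\|\p_2b\|_{H^2}^2$, $\|\Delta_hb\|_{H^2}^2$, $\|\p_1\p_2u\|_{H^2}^2$ absorbed into $E_0^{(1)}$ after choosing $\lambda$ small).

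However, there is a genuine gap at the single most delicate point. You assert that ``the $\|\p_2 u\|_{H^2}^2$ slot in $E_0^{(2)}(t)$ is what absorbs (\ref{Di})''. It does not, and the paper says so explicitly: (\ref{Di}), i.e. the term $I_{124}=4\int|\p_2u|\,|\p_3^3u|^2\,dx$, \emph{cannot} be bounded in terms of $E_0(t)$ alone. Concretely, any application of (\ref{Ine2}) gives
\begin{align*}
I_{124}\le C\,\|\p_2u\|_{L^2}^{\f14}\|\p_2^2u\|_{L^2}^{\f14}\|\p_2\p_3u\|_{L^2}^{\f14}\|\p_2^2\p_3u\|_{L^2}^{\f14}\|\p_3^3u\|_{L^2}^{\f32}\|\p_1\p_3^3u\|_{L^2}^{\f12},
\end{align*}
and the time-integrable factors ($\|\p_2^2u\|^{1/4}$, $\|\p_2^2\p_3u\|^{1/4}$, $\|\p_1\p_3^3u\|^{1/2}$, each lying in $L^2_t$ through the dissipation) only account for a H\"older exponent of $\f18+\f18+\f14=\f12<1$; the remaining sup-controlled factors carry no decay, so $\int_0^t I_{124}\,d\tau$ is not finite from $E_0$ alone, and a Young-type absorption into $\varepsilon\int_0^t\|\p_2u\|_{H^2}^2$ fails because only a single power of $\p_2u$ is present against the quadratic, dissipation-free factor $|\p_3^3u|^2$. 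The paper's resolution is to invoke the time-weighted functionals: it uses the decay rates in $E_2$ (e.g. $(1+\tau)^{1/4}\|\p_2u\|_{L^2}^{1/4}$) and the weighted integrability in $E_1$ (namely $\int_0^t(1+\tau)\|\p_2^2u\|_{L^2}^2\,d\tau$) to produce an integrable weight $(1+\tau)^{-13/12+\varepsilon/2}$, yielding $\int_0^t I_{124}\,d\tau\le CE_2^{1/4}E_1^{1/8}E_0^{9/8}\le CE^{3/2}(t)$. Your closing remark does concede that some terms must be ``handled by $E_1,E_2$'', but you misidentify which one: it is precisely (\ref{Di}) that requires them, and without this step the proof of (\ref{e0b}) does not close. (A minor additional slip: the good term $+\|\p_2u\|_{H^2}^2$ in the cross-term identity comes from $-(\p_2u,\p_tb)_{H^2}$ via the $\p_2u$ forcing in the $b$-equation, not from the $\p_2b$ term in the $u$-equation, which instead produces $+\|\p_2b\|_{H^2}^2$; this does not affect the outcome.)
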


\vskip .1in
To prove (\ref{E0}), we work with the Lyapunov functional defined by
$$
L(u, b)(t) =\|(u(t),b(t))\|_{H^3}^2 + \lambda \big(\p_2 u(t), b(t)\big)_{H^2},
$$
where $\lambda>0$ is a small parameter. Next we show the bound of $L(u,b)$. We evaluate the 
time evolution of each part in this Lyapunov functional. For the sake of clarity, we divide 
this process into two lemmas. The first focuses on bounding $\|(u(t),b(t))\|_{H^3}^2$ while the second handles the inner product
$\big(\p_2 u(t), b(t)\big)_{H^2}$.

\vskip .1in
\begin{lemma}\label{lem32}
Assume $(u, b)$ is a solution to (\ref{MHD1}). Then we have
\begin{align*}
(\|u(t)\|_{H^3}^2+\|b(t)\|_{H^3}^2)+2\int_0^t \Big(\mu\|\p_1u(\tau)\|_{H^3}^2+\eta\|\na_h b(\tau)\|_{H^3}^2\Big)d\tau
\le CE(0)+ C E^{\f 32}(t).
\end{align*}
\end{lemma}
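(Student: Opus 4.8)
The plan is to perform a standard $H^3$ energy estimate on the system (\ref{MHD1}), taking advantage of the cancellations in the MHD coupling. First I would apply $\p^\alpha$ for all multi-indices $|\alpha|\le 3$ to the $u$-equation and the $b$-equation, take the $L^2$ inner product of the resulting equations with $\p^\alpha u$ and $\p^\alpha b$ respectively, and sum. The linear coupling terms $\p_2 b$ and $\p_2 u$ cancel after integration by parts (since $\int \p^\alpha\p_2 b\cdot\p^\alpha u + \int \p^\alpha\p_2 u\cdot\p^\alpha b = 0$), and similarly the leading parts of the nonlinear magnetic-stretching terms $(b\cdot\na)b$ and $(b\cdot\na)u$ combine so that $\int \p^\alpha((b\cdot\na)b)\cdot\p^\alpha u + \int\p^\alpha((b\cdot\na)u)\cdot\p^\alpha b$ reduces (after using $\na\cdot b = 0$) to commutator terms only. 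The dissipative terms produce exactly $-2\mu\|\p_1 u\|_{H^3}^2 - 2\eta\|\na_h b\|_{H^3}^2$ on the right side, which we move to the left. This yields
\begin{align*}
\ddt\big(\|u\|_{H^3}^2+\|b\|_{H^3}^2\big) + 2\mu\|\p_1 u\|_{H^3}^2 + 2\eta\|\na_h b\|_{H^3}^2 = \sum (\text{triple-product terms}),
\end{align*}
where the triple products come from the Navier--Stokes nonlinearity $(u\cdot\na)u$ tested against $u$, and from the commutators $[\p^\alpha,(b\cdot\na)]b$ tested against $\p^\alpha u$, $[\p^\alpha,(b\cdot\na)]u$ tested against $\p^\alpha b$, plus $[\p^\alpha,(u\cdot\na)]b$ tested against $\p^\alpha b$.

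The main work — and the main obstacle — is bounding every one of these triple products by $C E^{3/2}(t)$, using only the dissipative quantities available in $E(t)$, namely $\|\p_1 u\|_{H^3}$, $\|\p_2 u\|_{H^2}$, $\|\na_h b\|_{H^3}$ (and the $H^3$ sup-norms). The anisotropic Sobolev inequalities of Lemma \ref{lem21}, especially (\ref{Ine1}) and (\ref{Ine2}), are the right tools: whenever a factor carries no $\p_1$ or $\p_h$ derivative, one distributes the three available directional derivatives across the three factors so that each factor picks up a derivative matching a controlled quantity. The genuinely delicate terms are those already flagged in the introduction — the expansion (\ref{Exa}) of $\int \p_3^3(u\cdot\na u)\cdot\p_3^3 u\,dx$, and in particular the two terms $\int\p_3^3 u_h\cdot\na_h u\cdot\p_3^3 u\,dx$ and $3\int\p_3 u_3\,\p_3^3 u\cdot\p_3^3 u\,dx$. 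For the first, I would use $\na_h u$ carries a horizontal derivative (controllable via $\|\p_1 u\|_{H^3}$ or $\|\p_2 u\|_{H^2}$), while one $\p_3^3 u$ factor gets an extra $\p_1$ or $\p_2$ via (\ref{Ine2}), and balance the remaining factor in $L^2$ with its $H^3$ bound; for the second, after invoking $\p_3 u_3 = -\p_1 u_1 - \p_2 u_2$ it becomes a term of the form (\ref{Di}), whose treatment in this lemma must be arranged so that the excess (the half-derivative loss in the $x_2$-direction) is absorbed into $E_1(t)$ and $E_2(t)$ — i.e., here we only need the crude bound that suffices after those extra energies are in hand, or we park it and bound it via the time-weighted pieces. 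Each triple product, after the anisotropic splitting, is a product of norms all appearing in $E(t)$, hence bounded by $E(t)\cdot E(t)^{1/2} = E^{3/2}(t)$.

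Finally I would integrate in time from $0$ to $t$: the left side gives $\|(u,b)(t)\|_{H^3}^2 + 2\int_0^t(\mu\|\p_1 u\|_{H^3}^2 + \eta\|\na_h b\|_{H^3}^2)\,d\tau$ minus the initial value $\|(u_0,b_0)\|_{H^3}^2 \le CE(0)$, and the right side is $\le C\int_0^t E^{3/2}(\tau)\,d\tau \le C E^{3/2}(t)$ since $E$ is nondecreasing. Rearranging gives the stated inequality. The one subtlety to watch is that $\int_0^t(\text{triple products})\,d\tau$ must be genuinely controlled by the \emph{time-integrated} dissipation in $E(t)$, not just pointwise, so in the anisotropic splitting each dissipative factor should appear to the first power (in $L^2_t$), with the sup-norm factors pulled out of the time integral; this is exactly what (\ref{Ine1})--(\ref{Ine2}) deliver when applied with care.
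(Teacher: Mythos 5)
Your plan follows the paper's proof essentially verbatim: the $H^3$ energy identity with the linear and magnetic-stretching cancellations, the anisotropic inequalities (\ref{Ine1})--(\ref{Ine2}) used to distribute directional derivatives onto the available dissipation, and---crucially---deferring the bad term $\int|\p_2 u|\,|\p_3^3 u|^2\,dx$ (the paper's $I_{124}$, arising exactly from the two terms you flag after invoking $\p_3u_3=-\p_1u_1-\p_2u_2$) to a time-weighted H\"older estimate using $E_1$ and $E_2$, which is precisely how the paper closes this lemma. The only slip is the intermediate claim $\int_0^t E^{3/2}(\tau)\,d\tau\le CE^{3/2}(t)$, which is false as written, but your final paragraph correctly replaces it with the right mechanism (sup-norm factors pulled out of the time integral, dissipative factors integrated in $L^2_t$), which is what the paper actually does.
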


\begin{proof}[Proof of Lemma \ref{lem32}]
 First we take the $L^2$-inner product of (\ref{MHD1}) with $(u,b)$ to obtain 
\begin{align}\label{ub0}
\f 12\f {d}{dt}(\|u(t)\|^2_{L^2}+\|b(t)\|^2_{L^2})+\big(\mu\|\partial_{1} u\|^{2}_{L^2}+\eta\|\na_h b\|^{2}_{L^2}\big)=0.
\end{align}
 Due to the equivalence of the norm $\|(u(t),b(t))\|_{H^{3}}$  with $\|(u(t),b(t))\|_{L^2}+\|(u(t),b(t))\|_{\dot{H}^{3}},$  it suffices to bound $\|(u(t),b(t))\|_{\dot{H}^{3}}$.
Applying $\partial^3_i\,(i=1,2,3)$ to the equations (\ref{MHD1}) and taking the $L^2$-inner product of the resulting equations with $(\partial^3_iu,\partial^3_ib)$, we have
\begin{align}\label{I0}
&\frac{1}{2}\sum_{i = 1}^3 \frac{d}{dt} \Big(\|\partial_i^3 u(t)\|_{L^2}^2 +  \|\partial_i^3 b(t)\|_{L^2}^2\Big)
+\sum_{i = 1}^3\Big( \mu\|\partial_i^3 \partial_1 u\|_{L^2}^2+ \eta\|\partial_i^3 \na_h b\|_{L^2}^2\Big) \nonumber\\
&=-\sum_{i = 1}^3\int \partial_i^3 (u\cdot \nabla u)\cdot \partial_i^3 u \; dx
+\sum_{i = 1}^3\int \partial_i^3 (b \cdot \nabla b)\cdot \partial_i^3 u\; dx \nonumber\\
&\quad- \sum_{i = 1}^3\int \partial_i^3 (u \cdot \nabla b)\cdot \partial_i^3 b \; dx
+\sum_{i = 1}^3\int \partial_i^3 (b \cdot \nabla u)\cdot \partial_i^3 b \; dx\nonumber\\
&:= I_1 + I_2 + I_3 + I_4.
\end{align}
By Leibniz formula, integration by parts and $\na\cdot u =0$, we have
\begin{align*}
I_1& = -\sum^2_{i=1} \sum_{k = 1}^3 \mathcal{C}_{3}^k\int\partial_i^k u \cdot \na\partial_i^{3-k} u\cdot \partial_i^3 u \, dx
- \sum_{k = 1}^3 \mathcal{C}_{3}^k\int\partial_3^k u \cdot \na\partial_3^{3-k} u\cdot \partial_3^3 u \, dx \\
&:=I_{11}+I_{12},
\end{align*}
where $\mathcal{C}_{3}^k$ is the standard binomial coefficient. 
By H\"{o}lder's inequality and Sobolev's inequality,
\begin{align}\label{I11}
I_{11}&=-\sum_{i = 1}^2\Big(3\int\partial_i u \cdot \na\partial_i^{2} u\cdot \partial_i^3 u \, dx
+3\int\partial_i^2 u \cdot \na\partial_i u\cdot \partial_i^3 u \, dx
+\int\partial_i^3 u \cdot \na u\cdot \partial_i^3 u \, dx\Big)\nonumber\\
&\le C (\|\na u\|_{L^\infty}\|\na\na_h^2u\|_{L^2}+\|\na_h^2u\|_{L^4}\|\na_h\na u\|_{L^4})\|\na_h^3u\|_{L^2}\nonumber\\
&\le C(\|\na u\|_{H^2}\|\na\na_h^2u\|_{L^2}+\|\na_h^2u\|_{H^1}\|\na_h\na u\|_{H^1})\|\na_h^3u\|_{L^2}\nonumber\\
&\le C\|\na u\|_{H^2}\|\na_h^2 u\|_{H^1}^2.
\end{align}
Rewriting the terms $I_{12}$ in components, we have
\begin{align*}
I_{12}&\le 4\int |\p_3u|\, |\p_3^2\na_h u|\,|\p_3^3u|\, dx  + 6\int |\na_h\p_3u|\, |\p_3^2 u|\,|\p_3^3u|\, dx\\
&\quad-3 \int \p_3u_3 \, \p_3^3u \cdot \p_3^3u \, dx   -\int \p_3^3u_h\cdot \na_h u\cdot \p_3^3u \, dx\\
&\le 4\int |\p_3u|\, |\p_3^2\na_h u|\,|\p_3^3u|\, dx   +6\int |\na_h\p_3u|\, |\p_3^2 u|\,|\p_3^3u|\, dx\\
&\quad+ 8 \int |u|\, |\p_3^3u|\,|\p_1\p_3^3u|\, dx   + 4\int |\p_2 u|\, |\p_3^3u|^2 \, dx\\
&:=I_{121}+I_{122} +I_{123}+I_{124},
\end{align*}
where we have used  the divergence-free condition, $\p_3 u_3 = -\p_1 u_1 -\p_2 u_2$. 
By the anisotropic inequalities (\ref{Ine1}) and (\ref{Ine2}), 
\begin{align*}
I_{121}+I_{122}&\le C \|\p_3u\|_{L^2}^{\f 14}\|\p_2\p_3u\|_{L^2}^{\f 14}\|\p_3^2u\|_{L^2}^{\f 14}\|\p_3^2\p_2u\|_{L^2}^{\f 14}
    \|\p_3^2\na_h u\|_{L^2}  \|\p_3^3u\|_{L^2}^{\f 12}\|\p_3^3\p_1u\|_{L^2}^{\f 12}\\
& \quad+ C\|\na_h\p_3u\|_{L^2}^{\f 12} \|\na_h\p_3^2u\|_{L^2}^{\f 12}   \|\p_3^2u\|_{L^2}^{\f 12}\|\p_2\p_3^2u\|_{L^2}^{\f 12}
    \|\p_3^3u\|_{L^2}^{\f 12}\|\p_1\p_3^3u\|_{L^2}^{\f 12}\\
&\le C \|\na u\|_{H^2}(\|\na_h\na u\|_{H^1}^2+\|\p_1\na^3u\|_{L^2}^2).
\end{align*}
Applying (\ref{Ine2}) again, $I_{123}$ can be bounded by
\begin{align*}
I_{123}
&\le C \|u\|_{L^2}^{\f 14}\|\p_2u\|_{L^2}^{\f 14}\|\p_3u\|_{L^2}^{\f 14}\|\p_2\p_3u\|_{L^2}^{\f 14}
        \|\p_3^3u\|_{L^2}^{\f 12}   \|\p_1\p_3^3 u\|_{L^2}^{\f 32} \\
&\le  C  \|u\|_{H^3}(\|\p_2 u\|_{H^1}^2+\|\p_1\na^3u\|_{L^2}^2).
\end{align*}
Therefore, 
\begin{align}\label{I12}
I_{12}&\le C \|u\|_{H^3}(\|\na_hu\|_{H^2}^2+\|\p_1\na^3u\|_{L^2}^2)+I_{124},
\end{align}
where $I_{124}$ will be estimated at the end of the proof.
Consequently, (\ref{I11}), together with (\ref{I12}), leads to
\begin{align}\label{I1}
I_1&\le C \|u\|_{H^3}(\|\na_hu\|_{H^2}^2+\|\p_1\na^3u\|_{L^2}^2)+I_{124}.
\end{align}
Since $b$ has better dissipation than $u$, it is simpler to bound $I_2$. By Leibniz's formula, 
\begin{align*}
I_2& = \sum^2_{i=1} \sum_{k = 1}^3 \mathcal{C}_{3}^k\int\partial_i^k b \cdot \na\partial_i^{3-k} b\cdot \partial_i^3 u \, dx
+ \sum_{k = 1}^3 \mathcal{C}_{3}^k\int\partial_3^k b \cdot \na\partial_3^{3-k} b\cdot \partial_3^3 u \, dx\\
&\quad+ \int b \cdot \nabla \p_i^3b \cdot \p_i^3u \, dx\\
&:=I_{21}+I_{22}+ \int b \cdot \nabla \p_i^3b \cdot \p_i^3u \, dx.
\end{align*}
As in $I_{11}$, we first have
\begin{align}\label{I21}
I_{21}
&\le C (\|\na b\|_{L^\infty}\|\na\na_h^2b\|_{L^2}+\|\na_h^2b\|_{L^4}\|\na_h\na b\|_{L^4})\|\na_h^3u\|_{L^2}\nonumber\\
&\le C(\|\na b\|_{H^2}\|\na\na_h^2b\|_{L^2}+\|\na_h^2b\|_{H^1}\|\na_h\na b\|_{H^1})\|\na_h^3u\|_{L^2}\nonumber\\
&\le C\|\na b\|_{H^2}(\|\na_h^2 b\|_{H^1}^2+\|\na_h^3u\|_{L^2}^2).
\end{align}
For $I_{22}$, we further split it into two parts and then apply (\ref{Ine1}) to get
\begin{align}\label{I22}
I_{22}&= \sum_{k = 1}^3 \mathcal{C}_{3}^k\int\partial_3^k b_h \cdot \na_h\partial_3^{3-k} b\cdot \partial_3^3 u \, dx
     +   \sum_{k = 1}^3 \mathcal{C}_{3}^k \int\partial_3^k b_3 \, \partial_3^{4-k} b\cdot \partial_3^3 u \, dx\nonumber\\
&\le C\sum_{k = 1}^3\|\partial_3^k b_h\|_{L^2}^{\f 12} \|\p_2\partial_3^k b_h\|_{L^2}^{\f 12}
     \|\na_h\partial_3^{3-k} b\|_{L^2}^{\f 12}\|\p_3\na_h\partial_3^{3-k} b\|_{L^2}^{\f 12}
    \|\p_3^3u\|_{L^2}^{\f 12}\|\p_1\p_3^3u\|_{L^2}^{\f 12}\nonumber\\
&\quad + C\sum_{k = 1}^3\|\partial_3^k b_3\|_{L^2}^{\f 12} \|\partial_3^{k+1} b_3\|_{L^2}^{\f 12}
     \|\partial_3^{4-k} b\|_{L^2}^{\f 12}\|\p_2\partial_3^{4-k} b\|_{L^2}^{\f 12}
    \|\p_3^3u\|_{L^2}^{\f 12}\|\p_1\p_3^3u\|_{L^2}^{\f 12}\nonumber\\
&\le C (\|\na b\|_{H^2}+\|\na^3 u\|_{L^2})(\|\na_h b\|_{H^3}^2+\|\p_1\p_3^3u\|_{L^2}^2).
\end{align}
Therefore, (\ref{I21}) and  (\ref{I22}) yield
\begin{align}\label{I2}
I_2\le C (\|\na b\|_{H^2}+\|\na^3 u\|_{L^2})(\|\na_h b\|_{H^3}^2+\|\na_h^3 u\|_{L^2}^2+\|\p_1\na^3u\|_{L^2}^2)
+ \int b \cdot \nabla \p_i^3b \cdot \p_i^3u \, dx.
\end{align}
We proceed to deal with $I_3$.  $I_3$ is firstly divided into three parts,
\begin{align*}
I_3& = -\sum^2_{i=1} \sum_{k = 1}^3 \mathcal{C}_{3}^k\int\partial_i^k u \cdot \na\partial_i^{3-k} b\cdot \partial_i^3 b\, dx
- \sum_{k = 1}^3 \mathcal{C}_{3}^k\int\partial_3^k u_h \cdot \na_h\partial_3^{3-k} b\cdot \partial_3^3 b \, dx \\
&- \sum_{k = 1}^3 \mathcal{C}_{3}^k\int\partial_3^k u_3 \, \partial_3^{4-k} b\cdot \partial_3^3 b \, dx \\
&:=I_{31}+I_{32}+I_{33}.
\end{align*}
By (\ref{Ine1}), 
\begin{align}\label{I3132}
I_{31}+I_{32}
&\le C\sum^2_{i=1}\sum_{k = 1}^3\|\partial_i^k u\|_{L^2}^{\f 12} \|\p_1 \partial_i^ku\|_{L^2}^{\f 12}
     \|\na\partial_i^{3-k} b\|_{L^2}^{\f 12}\|\p_2\na\partial_i^{3-k} b\|_{L^2}^{\f 12}
    \|\p_i^3b\|_{L^2}^{\f 12}\|\p_3\p_i^3b\|_{L^2}^{\f 12}\nonumber\\
 & \quad+C\sum_{k = 1}^3\|\partial_3^k u_h\|_{L^2}^{\f 12} \|\p_1\partial_3^k u_h\|_{L^2}^{\f 12}
     \|\na_h\partial_3^{3-k} b\|_{L^2}^{\f 12}\|\p_3\na_h\partial_i^{3-k} b\|_{L^2}^{\f 12}
    \|\p_3^3b\|_{L^2}^{\f 12}\|\p_2\p_3^3b\|_{L^2}^{\f 12}\nonumber\\
&\le C(\|\na u\|_{H^2}+\|\na b\|_{H^2})(\|\p_1\na u\|_{H^2}^2+\|\na_h b\|_{H^3}^2).
\end{align}
For $I_{33}$, we further decompose it, integrate by parts and use (\ref{Ine2}) to get
\begin{align}\label{I33}
I_{33}&=- \sum_{k = 2}^3 \mathcal{C}_{3}^k\int\partial_3^k u_3\, \partial_3^{4-k} b\cdot \partial_3^3 b \, dx
       +6 \int u_h\cdot \p_3^2\na_h b\cdot \p_3^3 b\, dx\nonumber\\
&\le  C \sum_{k = 2}^3 \|\p_3^k u_3\|_{L^2}\|\partial_3^{4-k} b\|_{L^2}^{\f 14}\|\p_1\partial_3^{4-k} b\|_{L^2}^{\f 14}
        \|\p_3\partial_3^{4-k} b\|_{L^2}^{\f 14}\|\p_1\p_3\partial_3^{4-k} b\|_{L^2}^{\f 14}
         \|\p_3^3b\|_{L^2}^{\f 12} \|\p_2\p_3^3b\|_{L^2}^{\f 12}\nonumber\\
    &\quad+ C \|u\|_{L^2}^{\f 14}\|\p_1u\|_{L^2}^{\f 14} \|\p_3u\|_{L^2}^{\f 14}\|\p_1\p_3u\|_{L^2}^{\f 14}
         \|\p_3^2\na_h b\|_{L^2}    \|\p_3^3b\|_{L^2}^{\f 12} \|\p_2\p_3^3b\|_{L^2}^{\f 12}\nonumber\\
&\le C (\|u\|_{H^1}+\|\na b\|_{H^2})(\|\p_1 u\|_{H^1}^2+\|\na\na_h u\|_{H^1}^2+\|\na\na_h b\|_{H^2}^2),
\end{align}
where we have used $\na\cdot u=0$. Combining (\ref{I3132}) and (\ref{I33}) yields
\begin{align}\label{I3}
I_3\le C (\| u\|_{H^3}+\|\na b\|_{H^2})(\|\na_h\na u\|_{H^1}^2+\|\p_1 u\|_{H^3}^2+\|\na_h b\|_{H^3}^2).
\end{align}
We now bound $I_4$. As in $I_2$, we decompose $I_4$ into three parts, 
\begin{align*}
I_4& = \sum^2_{i=1} \sum_{k = 1}^3 \mathcal{C}_{3}^k\int\partial_i^k b \cdot \na\partial_i^{3-k} u\cdot \partial_i^3 b \, dx
+ \sum_{k = 1}^3 \mathcal{C}_{3}^k\int\partial_3^k b \cdot \na\partial_3^{3-k} u\cdot \partial_3^3 b \, dx\\
&\quad+ \int b \cdot \nabla \p_i^3u \cdot \p_i^3b \, dx\\
&:=I_{41}+I_{42}+ \int b \cdot \nabla \p_i^3u \cdot \p_i^3b\, dx.
\end{align*}
By H\"{o}lder's inequality and Sobolev's inequality, 
\begin{align*}
I_{41}
\le C  \sum_{k = 1}^3 \|\na_h^kb\|_{L^4}\|\na\na_h^{3-k}u\|_{L^2}\|\na_h^3 b\|_{L^4}
\le C\|\na u\|_{H^2}\|\na_h b\|_{H^3}^2.
\end{align*}
The estimate for $I_{42}$ is more subtle. We first further split it into three terms,
\begin{align*}
I_{42}
&=  3\int\partial_3 b_h \cdot \na_h\partial_3^2 u\cdot \partial_3^3 b \, dx
 +\sum_{k = 2}^3 \mathcal{C}_{3}^k\int\partial_3^k b_h \cdot \na_h\partial_3^{3-k} u\cdot \partial_3^3 b \, dx\\
 &\quad+\sum_{k = 1}^3 \mathcal{C}_{3}^k\int\partial_3^k b_3 \, \partial_3^{4-k} u\cdot \partial_3^3 b \, dx\\
 &:=I_{421}+I_{422}+I_{423}.
\end{align*}
Applying (\ref{Ine2}) to $I_{421}$,  and (\ref{Ine1}) to $I_{422}$ and $I_{423}$, respectively, we obtain
\begin{align*}
I_{421}
&\le  C  \|\partial_3 b_h\|_{L^2}^{\f 14} \|\p_2\partial_3 b_h\|_{L^2}^{\f 14}
         \|\p_3^2 b_h\|_{L^2}^{\f 14} \|\p_2\p_3^2 b_h\|_{L^2}^{\f 14}
         \|\na_h\p_3^2 u\|_{L^2}     \|\p_3^3b\|_{L^2}^{\f 12} \|\p_1\p_3^3b\|_{L^2}^{\f 12}\nonumber\\
&\le C \|\na b\|_{H^2}(\|\na^2\na_h u\|_{L^2}^2+\|\na\na_h b\|_{H^2}^2),
\end{align*}
and
\begin{align*}
I_{422}+I_{423}
&\le C\sum_{k = 2}^3\|\partial_3^k b_h\|_{L^2}^{\f 12} \|\p_1\partial_3^k b_h\|_{L^2}^{\f 12}
     \|\na_h\partial_3^{3-k} u\|_{L^2}^{\f 12}\|\p_3\na_h\partial_3^{3-k} u\|_{L^2}^{\f 12}
    \|\partial_3^3 b\|_{L^2}^{\f 12}\|\p_2\partial_3^3 b\|_{L^2}^{\f 12}\nonumber\\
 & +C\sum_{k = 1}^3\|\partial_3^k b_3\|_{L^2}^{\f 12} \|\partial_3^{k+1} b_3\|_{L^2}^{\f 12}
     \|\partial_3^{4-k} u\|_{L^2}^{\f 12}\|\p_1\partial_3^{4-k} u\|_{L^2}^{\f 12}
    \|\p_3^3b\|_{L^2}^{\f 12}\|\p_2\p_3^3b\|_{L^2}^{\f 12}\nonumber\\
&\le C(\|\na u\|_{H^2}+\|\na^2 b\|_{H^1})(\|\p_1\na u\|_{H^2}^2+\|\na_h u\|_{H^2}^2+\|\na_h b\|_{H^3}^2).
\end{align*}
Thus,
\begin{align}\label{I4}
I_4\le C (\|\na u\|_{H^2}+\| \na b\|_{H^2})(\|\na_h u\|_{H^2}^2+\|\p_1\na u\|_{H^2}^2+\|\na_h b\|_{H^3}^2).
\end{align}
Inserting (\ref{I1}), (\ref{I2}), (\ref{I3}) and (\ref{I4}) in (\ref{I0}) and combining with (\ref{ub0}),  we conclude
\begin{align}\label{I0b}
&\frac{1}{2} \frac{d}{dt} \Big[\|(u(t),b(t))\|_{L^2}^2
+\sum_{i = 1}^3\|(\partial_i^3 u(t),\p_i^3b(t))\|_{L^2}^2 \Big]
+\Big[ \mu\|\p_1u\|_{L^2}^2+\eta\|\na_h u\|_{L^2}^2 \nonumber\\
&\quad +\sum_{i = 1}^3(\mu\|\partial_i^3 \partial_1 u\|_{L^2}^2
+ \eta\|\partial_i^3 \na_h b\|_{L^2}^2)\Big] \nonumber\\
&\le C (\|u\|_{H^3}+\|  b\|_{H^3})(\|\p_2u\|_{H^2}^2+\|\p_1 u\|_{H^3}^2+\|\na_h b\|_{H^3}^2)+I_{124}.
\end{align}
Integrating (\ref{I0b}) over $[0,t]$ yields
\begin{align*}
&\|(u(t),b(t))\|_{H^3}^2+2\int_0^t\big(\mu\|\p_1u (\tau)\|_{H^3}^2+\eta\|\na_h b (\tau)\|_{H^3}^2\big)\,d \tau \nonumber\\
& \le C \int_0^t (\|u(\tau)\|_{H^3}+\|  b(\tau)\|_{H^3})(\|\p_2 u(\tau)\|_{H^2}^2+\|\p_1 u(\tau)\|_{H^3}^2+\|\na_h b(\tau)\|_{H^3}^2)d\tau \nonumber\\
&\quad + C(\|u_0\|_{H^3}^2+\|b_0\|_{H^3}^2) +C\int_0^t I_{124}(\tau)\, d\tau  \nonumber\\
&\le CE_0^{\f 32}(t)+CE(0)+C\int_0^t I_{124}(\tau)\, d\tau.
\end{align*}
It remains to bound the integral of $I_{124}$. By means of (\ref{Ine2}), we have
\begin{align*}
I_{124}\le C \|\p_2u\|_{L^2}^{\f 14}\|\p_2^2u\|_{L^2}^{\f 14}\|\p_3\p_2u\|_{L^2}^{\f 14}\|\p_2^2\p_3u\|_{L^2}^{\f 14}
            \|\p_3^3u\|_{L^2}^{\f 32}\|\p_1\p_3^3u\|_{L^2}^{\f 12}.
\end{align*}
Then applying H\"{o}lder's inequality leads to
\begin{align*}
\int_0^t I_{124}(\tau) d\tau
&\le C\sup_{0\le \tau \le t} (1+\tau)^{\f 14} \|\p_2u(\tau)\|_{L^2}^{\f 14}\,
                           (1+\tau)^{\f 14(\f 23-\varepsilon )}\|\p_2\p_3u(\tau)\|_{L^2}^{\f 14}
                           \|\p_3^3u(\tau)\|_{L^2}^{\f 32}\\
& \quad \times \int_0^t   (1+\tau)^{\f 18}\|\p_2^2 u(\tau)\|_{L^2}^{\f 14} \,  \|\p_2^2\p_3u(\tau)\|_{L^2}^{\f 14}
                  \|\p_1\p_3^3u(\tau)\|_{L^2}^{\f 12}\, (1+\tau)^{-\f{13}{24}+\f {\varepsilon}{4}} d \tau\\
& \le C E_2^{\f 14}(t)  E_0^{\f 34}(t)  \Big(\int_0^t (1+\tau)\|\p_2^2u(\tau)\|_{L^2}^2d \tau\Big)^{\f 18} \,
                                  \Big(\int_0^t \|\p_2^2\p_3u(\tau)\|_{L^2}^{2}d\tau\Big)^{\f 18}\\
                                &\quad \times\Big(\int_0^t\|\p_1\p_3^3u(\tau)\|_{L^2}^2d\tau\Big)^{\f 14}
                                  \Big(\int_0^t(1+\tau)^{-\f{13}{12}+\f {\varepsilon}{2}} d\tau\Big)^{\f 12}\\
&\le C  E_2^{\f 14}(t)   E_1^{\f 18}(t)  E_0^{\f 9 8}(t)
\le C E^{\f 3 2}(t).
\end{align*}
Therefore, 
\begin{align*}
\|(u(t),b(t))\|_{H^3}^2+2\int_0^t(\mu\|\p_1u (\tau)\|_{H^3}^2+\eta\|\na_h b (\tau)\|_{H^3}^2)\,d \tau
\le C E^{\f 32}(t) + CE(0).
\end{align*}
This completes the proof of Lemma \ref{lem32}.
\end{proof}

\vskip .1in
Next we evaluate the inner product $(\p_2u(t), b(t))_{H^2}$ and prove the following lemma.

\begin{lemma}\label{lem33}
Assume $(u, b)$ is a solution to (\ref{MHD1}). Then 
\begin{align}\label{2uH2}
&-(\p_2u(t), b(t))_{H^2}+\f 12\int_0^t \|\p_2u(\tau)\|_{H^2}^2
-\int_0^t \Big(\|\p_2b(\tau)\|_{H^2}^2+(\mu^2+\eta^2)\|\Delta_h b(\tau)\|_{H^2}^2\Big)d\tau\nonumber\\
&\quad\le CE(0)+C E^{\f 32}(t).
\end{align}
\end{lemma}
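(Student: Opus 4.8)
The plan is to produce the inner product $-(\p_2 u, b)_{H^2}$ and its time derivative by differentiating the system \eqref{MHD1} up to second order, pairing the $u$-equation with $\p_2 b$ and the $b$-equation with $\p_2 u$ in $H^2$, and adding. The linear coupling terms $\p_2 b$ in the $u$-equation and $\p_2 u$ in the $b$-equation are the crucial ones: pairing $\p_2 b$ (from the $u$-equation) against $-\p_2 u$ and $\p_2 u$ (from the $b$-equation) against $-b$ in $H^2$, together with the time-derivative term $-\frac{d}{dt}(\p_2 u, b)_{H^2}$, will generate the positive definite term $\|\p_2 u\|_{H^2}^2$ (after using $\frac{d}{dt}(\p_2 u, b)_{H^2} = (\p_2 \p_t u, b)_{H^2} + (\p_2 u, \p_t b)_{H^2}$ and substituting the equations). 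First I would write
\begin{align*}
\frac{d}{dt}(\p_2 u, b)_{H^2} = (\p_2 \p_t u, b)_{H^2} + (\p_2 u, \p_t b)_{H^2},
\end{align*}
then replace $\p_t u$ and $\p_t b$ using \eqref{MHD1}. The leading linear contribution is $(\p_2(\p_2 b), b)_{H^2} + (\p_2 u, \p_2 u)_{H^2} = -\|\p_2 b\|_{H^2}^2 + \|\p_2 u\|_{H^2}^2$ after one integration by parts; the dissipation terms $\mu \p_1^2 u$ and $\eta \Delta_h b$ contribute $\mu(\p_2 \p_1^2 u, b)_{H^2} + \eta(\p_2 u, \Delta_h b)_{H^2}$, which after integration by parts and Cauchy-Schwarz are absorbed into $\frac14\|\p_2 u\|_{H^2}^2$ plus multiples of $\|\p_1^2 \na_h b\|_{H^2}^2$ and $\|\Delta_h b\|_{H^2}^2$ (these become the $(\mu^2+\eta^2)\|\Delta_h b\|_{H^2}^2$ term on the left after integration in time); note all these ``bad'' terms on the right of \eqref{2uH2} are controlled because $\|\na_h b\|_{H^3}$ sits inside $E_0(t)$.

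Next I would handle the nonlinear contributions. These are the terms
\begin{align*}
-(\p_2(u\cdot\na u), b)_{H^2}, \quad (\p_2((b\cdot\na) b), b)_{H^2}, \quad -(\p_2 u, u\cdot\na b)_{H^2}, \quad -(\p_2 u, (b\cdot\na) u)_{H^2},
\end{align*}
plus the mixed terms coming from $\p_2$ hitting $u$ or $b$ and from the $H^2$-derivatives. Each is a triple product that I would estimate using the anisotropic inequalities \eqref{Ine1}, \eqref{Ine2}, \eqref{Ine3} from Lemma~\ref{lem21}, distributing derivatives so that every factor carries either a $\p_1$ on $u$, a $\na_h$ on $b$, or a $\p_2 u$ — exactly the quantities time-integrable in $E_0(t)$, $E_1(t)$ — with the remaining low-order factor bounded by $\|(u,b)\|_{H^3} \le E^{1/2}(t)$. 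The target is to bound the time integral of all nonlinear terms by $C E^{3/2}(t)$. Finally I would integrate over $[0,t]$: the time-derivative term integrates to $-(\p_2 u(t), b(t))_{H^2} + (\p_2 u_0, b_0)_{H^2}$, and the latter is bounded by $\|u_0\|_{H^3}\|b_0\|_{H^3} \le C E(0)$; collecting everything gives \eqref{2uH2}.

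The main obstacle will be the nonlinear triple products in which an $H^2$-derivative falls entirely on the factor that would otherwise carry the useful dissipative derivative — for instance terms of the shape $\int |\p_2 u|\,|\na^2 b|\,|\na^2 b|$ or $\int |u|\,|\na^2 \p_2 b|\,|\na^2 b|$ — where one must carefully choose which of the anisotropic Gagliardo--Nirenberg splittings in Lemma~\ref{lem21} to apply so that no factor with only a bare vertical derivative $\p_3$ (which is not time-integrable) appears, and so that the powers of $\|\p_2 u\|_{H^2}$, $\|\p_1 u\|_{H^3}$, $\|\na_h b\|_{H^3}$ produced are each $\le 2$ in total. Because the extra $x_2$-dissipation is one derivative weaker than a genuine $\p_2^2$ term (only $\|\p_2 u\|_{H^2}$, not $\|\p_2 u\|_{H^3}$, is available), the hardest subcase is the analogue of the term \eqref{Di} appearing here, which may require borrowing the time weights from $E_1(t)$ or $E_2(t)$ — as was done for $I_{124}$ in Lemma~\ref{lem32} — rather than closing purely within $E_0(t)$; I expect the bookkeeping of time weights, Hölder exponents in $\tau$, and the decay rates encoded in $E_2(t)$ to be the most delicate part.
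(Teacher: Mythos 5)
Your proposal follows essentially the same route as the paper: compute $\f{d}{dt}(\p_2 u,b)_{H^2}$, substitute the equations to extract $\|\p_2 u\|_{H^2}^2-\|\p_2 b\|_{H^2}^2$ plus the dissipative cross terms $-\mu(\p_2 u,\p_1^2 b)_{H^2}-\eta(\p_2 u,\Delta_h b)_{H^2}$ handled by Cauchy--Schwarz, and bound the four nonlinear triple products with the anisotropic inequalities of Lemma~\ref{lem21}. One small correction to your final paragraph: the analogue of the difficult term (\ref{Di}) does not occur in this lemma, since every nonlinear term here carries either a copy of $b$ (which enjoys full horizontal diffusion) or a copy of $\p_2 u$, so the paper closes all estimates with the single bound $C(\|u\|_{H^3}+\|b\|_{H^3})(\|\p_2 u\|_{H^2}^2+\|\p_1 u\|_{H^3}^2+\|\na_h b\|_{H^3}^2)$, i.e.\ within $E_0(t)$ alone, with no need to borrow time weights from $E_1(t)$ or $E_2(t)$.
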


\begin{proof}[Proof of Lemma \ref{lem33}]
Invoking the equations of $u$ and $b$ in (\ref{MHD1}), we have 
\begin{align}\label{D2uH2}
&-\f {d}{dt}(\p_2u(t), b(t))_{H^2}+ \|\p_2u\|_{H^2}^2- \|\p_2b\|_{H^2}^2\nonumber\\
&=(\p_2(u\cdot\na u), b)_{H^2}-(\p_2(b\cdot\na b), b)_{H^2}+(\p_2u, u\cdot\na b)_{H^2}-(\p_2u, b\cdot\na u)_{H^2}\nonumber\\
& \quad -\mu(\p_2\p_1^2u, b)_{H^2}-\eta(\p_2u, \Delta_h b)_{H^2}\nonumber\\
&:= I_5+\cdots+I_{10}.
\end{align}
By integration by parts, $I_5$ can be rewritten as
\begin{align*}
 I_5&=-\int u\cdot\na u\cdot(\p_2b-\p_2\Delta b)\,dx+\int\na (u\cdot\na u)\cdot\p_2\na^3b\,dx\nonumber\\
    &=-\int u\cdot\na u\cdot (\p_2b-\p_2\Delta b)\,dx+\int(\na u\cdot\na) u\cdot\p_2\na^3b\,dx\\
    &\quad +\int (u\cdot \na)\na u\cdot\p_2\na^3b\,dx.
\end{align*}
Applying (\ref{Ine1}) and (\ref{Ine2}) leads to
\begin{align*}
I_{5}
&\le C\|u\|_{L^2}^{\f 12} \|\p_2u\|_{L^2}^{\f 12}  \|\na u\|_{L^2}^{\f 12}\|\p_1\na u\|_{L^2}^{\f 12}
    \|\p_2b+\p_2\Delta b\|_{L^2}^{\f 12}\|\p_3\p_2b+\p_3\p_2\Delta b\|_{L^2}^{\f 12}\nonumber\\
& +  C\|\na u\|_{L^2}^{\f 14} \|\p_1\na u\|_{L^2}^{\f 14}  \|\p_3\na u\|_{L^2}^{\f 14}\|\p_1\p_3\na u\|_{L^2}^{\f 14}
    \|\na u\|_{L^2}^{\f 12}\|\p_2\na u\|_{L^2}^{\f 12}\|\p_2\na^3 b\|_{L^2}\nonumber\\
& +  C\| u\|_{L^2}^{\f 14} \|\p_1 u\|_{L^2}^{\f 14}  \|\p_3  u\|_{L^2}^{\f 14}\|\p_1\p_3 u\|_{L^2}^{\f 14}
    \|\na^2 u\|_{L^2}^{\f 12}\|\p_2\na^2 u\|_{L^2}^{\f 12}\|\p_2\na^3 b\|_{L^2}\nonumber\\
&\le C\| u\|_{H^2}(\|\na_h u\|_{H^2}^2+\|\na_h b\|_{H^3}^2).
\end{align*}
Similarly,
\begin{align*}
I_{6} \le C\| b\|_{H^2}\|\na_h b\|_{H^3}^2.
\end{align*}
For $I_7$, we split it into two parts
\begin{align*}
 I_7=\int u\cdot\na b\cdot(\p_2u-\p_2\Delta u)\,dx+\int \Delta(u\cdot\na b)\cdot \p_2\Delta u\,dx
    := I_{71}+I_{72}.
\end{align*}
By (\ref{Ine2}), 
\begin{align*}
 I_{71}&\le C  \| u\|_{L^2}^{\f 14} \|\p_1 u\|_{L^2}^{\f 14}  \|\p_3  u\|_{L^2}^{\f 14}\|\p_1\p_3 u\|_{L^2}^{\f 14}
              \|\na b\|_{L^2}^{\f 12}\|\p_2\na b\|_{L^2}^{\f 12}\|\p_2u+\p_2 \Delta u \|_{L^2}\nonumber\\
       &\le C(\| u\|_{H^1}+\|\na b\|_{L^2}) (\|\na_h u\|_{H^2}^2+\|\p_2\na b\|_{L^2}^2).
\end{align*}
Similarly, making use of the inequality (\ref{Ine2}) again, we get
\begin{align*}
I_{72}&=\int (\Delta u\cdot\na b+2\na u\cdot\na^2 b+  u\cdot\na\Delta b )\cdot \p_2\Delta u\,dx\nonumber\\
&\le  C\|\Delta u\|_{L^2}^{\f 12} \|\p_1\Delta u\|_{L^2}^{\f 12}
       \|\na b\|_{L^2}^{\f 14}\|\p_2\na b\|_{L^2}^{\f 14}\|\p_3\na b\|_{L^2}^{\f 14}\|\p_2\p_3\na b\|_{L^2}^{\f 14}
       \|\p_2\Delta u\|_{L^2}\nonumber\\
&\quad +  C \| \na u\|_{L^2}^{\f 14} \|\p_2\na u\|_{L^2}^{\f 14}  \|\p_3  \na u\|_{L^2}^{\f 14}\|\p_2\p_3\na u\|_{L^2}^{\f 14}
    \|\na^2 b\|_{L^2}^{\f 12}\|\p_1\na^2 b\|_{L^2}^{\f 12}\|\p_2\Delta u\|_{L^2}\nonumber\\
&\quad +  C \| u\|_{L^2}^{\f 14} \|\p_2u\|_{L^2}^{\f 14}  \|\p_3 u\|_{L^2}^{\f 14}\|\p_2\p_3 u\|_{L^2}^{\f 14}
    \|\na\Delta b\|_{L^2}^{\f 12}\|\p_1\na\Delta b\|_{L^2}^{\f 12}\|\p_2\Delta u\|_{L^2}\nonumber\\
&\le C(\| u\|_{H^2}+\|\na b\|_{H^2})(\|\na_h u\|_{H^2}^2+\|\na_h\na b\|_{H^2}^2),
\end{align*}
which, together with the estimate of $I_{71}$, gives
\begin{align*}
I_{7} \le C(\| u\|_{H^2}+\|\na b\|_{H^2})(\|\na_h u\|_{H^2}^2+\|\na_h\na b\|_{H^2}^2).
\end{align*}
$I_8$ can be estimated with the same process as $I_7$. Firstly,
\begin{align*}
 I_8=-\int b\cdot\na u\cdot(\p_2u-\p_2\Delta u)dx-\int \Delta(b\cdot\na u)\cdot \p_2\Delta u\,dx
    = I_{81}+I_{82}.
\end{align*}
Then we can derive
\begin{align*}
 I_{81}\le C(\|\na u\|_{L^2}+\| b\|_{H^1}) (\|\p_2u\|_{H^2}^2+\|\p_1 b\|_{H^1}^2).
\end{align*}
and
\begin{align*}
I_{82}&=-\int (\Delta b\cdot\na u+2\na b\cdot\na^2 u+  b\cdot\na\Delta u )\cdot \p_2\Delta u\,dx\nonumber\\
&\le  C\|\Delta b\|_{L^2}^{\f 12} \|\p_1\Delta b\|_{L^2}^{\f 12}
       \|\na u\|_{L^2}^{\f 14}\|\p_2\na u\|_{L^2}^{\f 14}\|\p_3\na u\|_{L^2}^{\f 14}\|\p_2\p_3\na u\|_{L^2}^{\f 14}
       \|\p_2\Delta u\|_{L^2}\nonumber\\
& +  C \| \na b\|_{L^2}^{\f 14} \|\p_2\na b\|_{L^2}^{\f 14}  \|\p_3  \na b\|_{L^2}^{\f 14}\|\p_2\p_3\na b\|_{L^2}^{\f 14}
    \|\na^2 u\|_{L^2}^{\f 12}\|\p_1\na^2 u\|_{L^2}^{\f 12}\|\p_2\Delta u\|_{L^2}\nonumber\\
& +  C \| b\|_{L^2}^{\f 14} \|\p_2b\|_{L^2}^{\f 14}  \|\p_3 b\|_{L^2}^{\f 14}\|\p_2\p_3 b\|_{L^2}^{\f 14}
    \|\na\Delta u\|_{L^2}^{\f 12}\|\p_1\na\Delta u\|_{L^2}^{\f 12}\|\p_2\Delta u\|_{L^2}\nonumber\\
&\le C(\|\na u\|_{H^2}+\| b\|_{H^2})(\|\p_2\na u\|_{H^1}^2+\|\p_1\na^2 u\|_{H^1}^2+\|\na_h b\|_{H^2}^2).
\end{align*}
Thus,
\begin{align*}
I_{8} \le C(\|\na u\|_{H^2}+\| b\|_{H^2})(\|\p_2 u\|_{H^2}^2+\|\p_1\na^2 u\|_{H^1}^2+\|\na_h b\|_{H^2}^2).
\end{align*}
By H\"{o}lder's inequality and Young's inequality, 
\begin{align*}
I_{9}+I_{10} =-\mu(\p_2u, \p_1^2b)_{H^2}-\eta(\p_2u, \Delta_h b)_{H^2}
\le \f 12\|\p_2 u\|_{H^2}^2+\mu^2\|\p_1^2b \|_{H^2}+\eta^2\|\Delta_h b \|_{H^2}^2.
\end{align*}
In summary, we have obtained
\begin{align}\label{2ub}
&-\f {d}{dt}(\p_2u(t), b(t))_{H^2}+\f 12\|\p_2u\|_{H^2}^2
- \Big(\|\p_2b\|_{H^2}^2+(\mu^2+\eta^2)\|\Delta_h b\|_{H^2}^2\Big)\nonumber\\
&\le C(\| u\|_{H^3}+\|b\|_{H^3})(\|\p_2 u\|_{H^2}^2+\|\p_1u\|_{H^3}^2+\|\na_h b\|_{H^3}^2).
\end{align}
Then integrating (\ref{2ub}) leads to the desired estimate (\ref{2uH2}). This completes the proof of Lemma \ref{lem33}.
\end{proof}

\vskip .1in
Now we ready to prove Proposition \ref{pro1}.
\begin{proof}[Proof of Proposition \ref{pro1}]
According to Lemma \ref{lem32} and \ref{lem33}, we have
\begin{align*}
&\big(\|u(t)\|_{H^3}^2+\|b(t)\|_{H^3}^2-\lambda(\p_2u(t), b(t))_{H^2}\big)
+\int_0^t \Big[2\mu\|\p_1u(\tau)\|_{H^3}^2\\
&\quad+\big(2\eta-\lambda(1+\mu^2+\eta^2)\big)\|\na_h b(\tau)\|_{H^3}^2
+\f \lambda 2\|\p_2u(\tau)\|_{H^2}^2 \Big]d\tau\\
&\le CE(0)+ C E^{\f 32}(t),
\end{align*}
where $\lambda$ is a parameter.
Now we select $\lambda$ to be sufficiently small to obtain
\begin{align*}
&\|u(t)\|_{H^3}^2+\|b(t)\|_{H^3}^2+\int_0^t \Big(\|\p_1 u(\tau)\|_{H^3}^2
+\|\p_2 u(\tau)\|_{H^2}^2+\|\nabla_h b(\tau)\|_{H^3}^2\Big)d \tau\\
&\le C E(0)+ C E^\f 32(t).
\end{align*}
This completes the proof of Proposition \ref{pro1}.
\end{proof}

\vskip .3in
\section{Estimate for $E_1(t)$}
\label{sec4}

The section proves the {\it a priori} inequality (\ref{e1b}) for $E_1(t)$. 
That is, we establish the following proposition. Since the velocity equation does not have the 
vertical dissipation, we need to make use of the extra smoothing and stabilization revealed 
by the wave structure in (\ref{wv}). Our idea is to use the inner product  
$(1+t)(\p_2\na_hu, \na_hb)$ to decode this regularizing property. As a consequence, we obtain 
the time integrability of  $(1+t)\,\|\p_2\na_hu\|_{L^2}^2$. More details are given in 
Lemma \ref{lem43} and its proof. 

\begin{proposition}\label{pro2}
For some constants $C>0$, it holds
\begin{align}\label{E1}
E_1(t)\le CE(0)+CE_0(t)+ C E^{\f 32}(t).
\end{align}
\end{proposition}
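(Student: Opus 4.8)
The plan is to follow the strategy of Proposition \ref{pro1}. Since the velocity equation carries no vertical dissipation, the natural $(1+t)$-weighted $H^1$ energy of $(\na_h u,\na_h b)$ will close only up to the missing $x_2$-dissipation of $u$, which I will recover from the wave structure (\ref{wv}) by adjoining a small multiple of the time-weighted cross term $(1+t)\,(\p_2\na_h u,\na_h b)$, the $E_1$-analogue of the inner product $(\p_2 u,b)_{H^2}$ used in Lemma \ref{lem33}. Accordingly I will split $E_1(t)$ into the piece $\int_0^t(1+\tau)\|\p_2\na_h u(\tau)\|_{L^2}^2\,d\tau$ and everything else, bounding them by the cross term and by the weighted $H^1$ energy, respectively.

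\textbf{Step 1 (weighted $H^1$ energy).} First I would apply $\na_h$ and $\na\na_h$ to (\ref{MHD1}), pair in $L^2$ with the corresponding derivatives of $(u,b)$, sum, multiply by $(1+t)$, and integrate over $[0,t]$. Integrating the weight by parts, the term created by $\ddt(1+t)$ equals $\int_0^t\|(\na_h u,\na_h b)(\tau)\|_{H^1}^2\,d\tau$, which is $\le CE_0(t)$ because $\|\na_h u\|_{H^1}\le\|\p_1 u\|_{H^3}+\|\p_2 u\|_{H^2}$ and $\|\na_h b\|_{H^1}\le\|\na_h b\|_{H^3}$; the initial data contributes $\le CE(0)$; and the remaining terms on the right (the weighted, one-order-lower counterparts of the triple products $I_1$--$I_4$ of Lemma \ref{lem32}) are bounded by $CE^{\f32}(t)$ via Lemma \ref{lem21}, the divergence-free conditions, and H\"older's inequality in time against the weights defining $E_0,E_1,E_2$. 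This yields
\[
(1+t)\|(\na_h u,\na_h b)(t)\|_{H^1}^2+2\int_0^t(1+\tau)\big(\mu\|\p_1\na_h u\|_{H^1}^2+\eta\|\na_h^2 b\|_{H^1}^2\big)\,d\tau\le CE(0)+CE_0(t)+CE^{\f32}(t).
\]

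\textbf{Step 2 (weighted cross term).} Next, using the $u$- and $b$-equations of (\ref{MHD1}) as in (\ref{D2uH2}), I would derive the identity
\begin{align*}
&-\ddt\big[(1+t)(\p_2\na_h u,\na_h b)\big]+(1+t)\|\p_2\na_h u\|_{L^2}^2-(1+t)\|\p_2\na_h b\|_{L^2}^2\\
&\quad=-(\p_2\na_h u,\na_h b)-\mu(1+t)(\p_2\p_1^2\na_h u,\na_h b)-\eta(1+t)(\p_2\na_h u,\Delta_h\na_h b)+(1+t)\,\mathcal R,
\end{align*}
where $\mathcal R$ collects the nonlinear terms (the one-order-lower counterparts of $I_5$--$I_8$ of Lemma \ref{lem33}). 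Here $\|\p_2\na_h b\|_{L^2}\le\|\na_h^2 b\|_{L^2}$; the weight term obeys $|(\p_2\na_h u,\na_h b)|\le\f12\|\p_2 u\|_{H^1}^2+\f12\|\na_h b\|_{L^2}^2$, whose time integral is $\le CE_0(t)$; after an integration by parts and Young's inequality with a small parameter, the two linear remainders are dominated by a small multiple of $(1+t)\|\p_2\na_h u\|_{L^2}^2$ plus $C(1+t)\|\na_h^2 b\|_{H^1}^2$; and $\mathcal R\le CE^{\f32}(t)$ by Lemma \ref{lem21}. Integrating over $[0,t]$ and moving the bad-signed terms to the right gives, for some $c>0$,
\begin{align*}
&-(1+t)(\p_2\na_h u,\na_h b)+c\int_0^t(1+\tau)\|\p_2\na_h u\|_{L^2}^2\,d\tau\\
&\quad\le CE(0)+CE_0(t)+CE^{\f32}(t)+C\int_0^t(1+\tau)\|\na_h^2 b\|_{H^1}^2\,d\tau.
\end{align*}

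\textbf{Step 3 (closing).} Finally I would form $L_1(t)=(1+t)\|(\na_h u,\na_h b)(t)\|_{H^1}^2-\lambda(1+t)(\p_2\na_h u,\na_h b)(t)$ with $0<\lambda\le1$ small; since $|(\p_2\na_h u,\na_h b)|\le\f12\|(\na_h u,\na_h b)\|_{H^1}^2$ we have $L_1(t)\ge\f12(1+t)\|(\na_h u,\na_h b)(t)\|_{H^1}^2$. Adding $\lambda$ times the Step 2 estimate to the Step 1 estimate and choosing $\lambda$ small enough that $\lambda C<\eta$, the term $\lambda C\int_0^t(1+\tau)\|\na_h^2 b\|_{H^1}^2\,d\tau$ is absorbed into the genuine dissipation $2\eta\int_0^t(1+\tau)\|\na_h^2 b\|_{H^1}^2\,d\tau$ from Step 1, while $\lambda c\int_0^t(1+\tau)\|\p_2\na_h u\|_{L^2}^2\,d\tau$ survives on the left; collecting the surviving terms and using $L_1(t)\ge\f12(1+t)\|(\na_h u,\na_h b)(t)\|_{H^1}^2$ yields (\ref{E1}).

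The hard part will be the nonlinear bookkeeping of Step 1 (and of $\mathcal R$): because the $x_2$-smoothing extracted from (\ref{wv}) is one derivative weaker than a genuine $\p_2^2$-dissipation, the $E_1$-counterpart of the worst term $I_{124}$ of Lemma \ref{lem32}, namely $\int_0^t(1+\tau)\int|\p_2 u|\,|\p_2\p_3 u|^2\,dx\,d\tau$ (which arises after substituting $\p_3 u_3=-\p_1 u_1-\p_2 u_2$), cannot be absorbed by the weighted $E_1$-dissipation alone. One must distribute the factor $(1+\tau)$ among $\|\p_2 u\|_{L^2}$, $\|\p_1\p_2 u\|_{L^2}$, $\|\p_2\p_3 u\|_{L^2}$ and the higher $\na_h$-derivatives using the sharp decay rates built into $E_2(t)$, for instance $\|\p_1\p_2 u\|_{L^2}\lesssim(1+\tau)^{-\f54+\varepsilon}$ and $\|\p_2\p_3 u\|_{L^2}\lesssim(1+\tau)^{-\f23+\varepsilon}$, precisely as in the estimate of $I_{124}$, so that the residual power of $(1+\tau)$ becomes integrable.
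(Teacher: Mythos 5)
Your proposal is correct and follows essentially the same route as the paper: the weighted $\na_h$-level $H^1$ energy estimate (the paper's Lemma \ref{lem42}), the weighted cross term $(1+t)(\p_2\na_h u,\na_h b)$ to extract $\int_0^t(1+\tau)\|\p_2\na_h u\|_{L^2}^2\,d\tau$ (Lemma \ref{lem43}), and a small-parameter Lyapunov combination, with the nonlinear terms controlled through Lemma \ref{lem21} and the decay weights built into $E_2(t)$. The only minor bookkeeping slip is in Step 2: after integrating by parts, the term $\mu(1+t)(\p_2\p_1^2\na_h u,\na_h b)$ yields via Young a piece $C(1+t)\|\p_1^2\na_h u\|_{L^2}^2$, which is absorbed (after multiplying by the small $\lambda$) into the weighted dissipation $2\mu\int_0^t(1+\tau)\|\p_1\na_h u\|_{H^1}^2\,d\tau$ from Step 1 rather than into $\|\na_h^2 b\|_{H^1}^2$ or $\|\p_2\na_h u\|_{L^2}^2$, exactly as in the paper's choice of $\lambda_1$.
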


\vskip .1in
We shall divided the proof of (\ref{E1}) into two main parts.   The first one bounds 
the time-weighted energy $(1+t)\|(\na_h u, \na_h b)\|_{H^1}^2$ while the second handles 
the inner product $(1+t)(\p_2\na_hu, \na_hb)$ to generate the time-weighted dissipation $(1+t)\,\|\p_2\na_hu\|_{L^2}^2$.

\vskip .1in
\begin{lemma}\label{lem42}
Assume $(u, b)$ solves (\ref{MHD1}). Then we have
\begin{align}\label{Ghub}
&(1+t)\big(\|\na_h u(t)\|_{H^1}^2+\|\na_h b(t)\|_{H^1}^2\big)+2\int_0^t (1+\tau)\big(\mu\|\p_1\na_hu(\tau)\|_{H^1}^2+\eta\|\Delta_h b(\tau)\|_{H^1}^2\big)d\tau\nonumber\\
&\qquad\le E_0(t)+E(0)+C E^{\f 32}(t).
\end{align}
\end{lemma}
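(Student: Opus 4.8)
\textbf{Proof proposal for Lemma \ref{lem42}.}

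The plan is to derive a time-weighted energy estimate for the horizontal derivatives $\na_h u$ and $\na_h b$ up to first order, i.e. in the $\dot H^1$ sense, and then combine with the time weight $(1+t)$ using the standard device: first establish an unweighted differential inequality, multiply by $(1+t)$, and absorb the extra term $\|(\na_h u,\na_h b)\|_{H^1}^2$ produced by $\ddt\big[(1+t)(\cdots)\big]$ into $E_0(t)$. Concretely, I would apply $\na_h$ and $\na_h\p_j$ ($j=1,2,3$) to the equations in (\ref{MHD1}), take the $L^2$ inner product with $\na_h u$, $\na_h b$ (respectively $\na_h\p_j u$, $\na_h\p_j b$), and sum. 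The linear terms $\p_2 b$ and $\p_2 u$ cancel against each other as usual in this symmetric coupling, and the dissipative terms yield exactly $\mu\|\p_1\na_h u\|_{H^1}^2+\eta\|\Delta_h b\|_{H^1}^2$ on the left. This gives a differential identity of the schematic form
\begin{align*}
\f12\ddt\|(\na_h u,\na_h b)(t)\|_{H^1}^2+\mu\|\p_1\na_h u\|_{H^1}^2+\eta\|\Delta_h b\|_{H^1}^2 = \mathcal N(t),
\end{align*}
where $\mathcal N(t)$ collects all the nonlinear contributions coming from $(u\cdot\na)u$, $(b\cdot\na)b$, $(u\cdot\na)b$, $(b\cdot\na)u$ after the derivatives land.

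The core work is to show $\mathcal N(t)$ is bounded, after integration in time against the weight $(1+\tau)$, by $CE^{3/2}(t)$. Here I would lean on the anisotropic product and triple-product inequalities of Lemma \ref{lem21} — especially (\ref{Ine1}) and (\ref{Ine2}) — distributing the available derivatives so that each factor carries either a $\p_1$ (paid for by the $\mu\|\p_1 u\|$-type dissipation), a $\na_h$ on $b$ (paid for by $\eta\|\na_h b\|$), or a single $\p_2$ on $u$ (paid for by the weaker $\|\p_2 u\|_{H^2}$-dissipation built into $E_0$), while the remaining low-order factor is controlled by $\sup_\tau\|(u,b)\|_{H^3}\lesssim E_0^{1/2}(t)$. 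The time-weight bookkeeping is the delicate part: when a term must be estimated using $E_1(t)$ or $E_2(t)$ factors carrying powers of $(1+\tau)$, I would split the weight $(1+\tau)$ via Hölder so that the decay rates recorded in the definition of $E_2(t)$ (e.g. $(1+\tau)^{-1}$ for $\|\na_h u\|_{L^2}$, $(1+\tau)^{-5/4+\eps}$ for $\|\p_1\p_j u\|_{L^2}$) make the resulting time integral convergent; this mirrors the treatment of $\int_0^t I_{124}(\tau)\,d\tau$ in the proof of Lemma \ref{lem32}.

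With $\mathcal N(t)$ controlled, I would multiply the differential inequality by $(1+t)$, rewrite $(1+t)\,\f{d}{dt}\|(\na_h u,\na_h b)\|_{H^1}^2 = \f{d}{dt}\big[(1+t)\|(\na_h u,\na_h b)\|_{H^1}^2\big] - \|(\na_h u,\na_h b)\|_{H^1}^2$, and integrate over $[0,t]$. The boundary term at $\tau=0$ contributes $E(0)$; the stray term $\int_0^t\|(\na_h u,\na_h b)(\tau)\|_{H^1}^2\,d\tau$ is precisely bounded by $E_0(t)$ (it is dominated by $\int_0^t(\|\p_1 u\|_{H^3}^2+\|\na_h b\|_{H^3}^2)\,d\tau$, modulo handling $\|\p_2 u\|_{H^1}^2$, which sits inside $E_0$ as well); and the weighted nonlinear integral is $CE^{3/2}(t)$. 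Collecting these yields exactly (\ref{Ghub}). The main obstacle I anticipate is the nonlinear term in which $\na_h\p_j$ falls entirely on a factor lacking good dissipation — a genuine $\p_3$-derivative with no $\p_1$, $\na_h b$, or single $\p_2 u$ structure available; resolving it should require invoking the divergence-free condition $\p_3 u_3=-\p_1 u_1-\p_2 u_2$ (as in the treatment of $I_{12}$ and $I_{124}$ above) to trade the bad $\p_3$ for a payable $\p_1$ or $\p_2$, at the cost of a factor that must then be absorbed using the $E_1$ or $E_2$ weights.
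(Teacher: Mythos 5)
Your proposal follows essentially the same route as the paper: the paper takes the $H^1$-inner product with $(\Delta_h u,\Delta_h b)$ (equivalent, after integration by parts, to your $\na_h$/$\na_h\p_j$ energy estimate), carries the weight $(1+t)$ through the identity so that the stray term $\f12\|(\na_h u,\na_h b)\|_{H^1}^2$ is absorbed into $E_0(t)$, and controls the weighted nonlinear integrals by $CE^{3/2}(t)$ using the anisotropic inequalities of Lemma \ref{lem21} together with the decay weights built into $E_1$ and $E_2$, exactly as you outline. Your anticipated difficulty with uncompensated $\p_3$ derivatives is handled in the paper just as you predict, by exploiting the divergence-free condition and the fact that every term here already carries at least one horizontal derivative.
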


\begin{proof}[Proof of Lemma \ref{lem42}]
Taking the $H^1$-inner product of (\ref{MHD1}) with $(\Delta_h u, \Delta_h b)$, and multiplying by $(1+t)$, we obtain
\begin{align}\label{J0}
&\f 12 \f {d}{dt}(1+t)(\|\na_h u(t)\|_{H^1}^2+\|\na_h b(t)\|_{H^1}^2) + (1+t) (\mu\|\p_1\na_h u\|_{H^1}^2+\eta\|\Delta_h b\|_{H^1}^2)\nonumber\\
&=\f 12 (\|\na_h u\|_{H^1}^2+\|\na_h b\|_{H^1}^2) - (1+t)\big(\na_h(u\cdot \na u), \na_hu\big)_{H^1} +(1+t)\big(\na_h(b\cdot \na b), \na_hu\big)_{H^1}\nonumber\\
&\quad-(1+t)\big(\na_h(u\cdot \na b), \na_h b\big)_{H^1}+(1+t)\big(\na_h(b\cdot \na u), \na_h b\big)_{H^1}\nonumber\\
&:=\f 12 (\|\na_h u\|_{H^1}^2+\|\na_h b\|_{H^1}^2)+J_1+J_2+J_3+J_4.
\end{align}
To bound $J_1$, we split $J_1$ into three parts
\begin{align*}
J_1&=-(1+t)\Big(\int\na_h(u\cdot \na u)\cdot\na_hu\,dx +\int\na_h^2(u\cdot \na u)\cdot\na_h^2 u\,dx\\
&\quad \qquad\qquad  + \int\na_h\p_3(u\cdot \na u)\cdot\na_h\p_3u\,dx\Big)\\
 &:=-(1+t)(J_{11}+J_{12}+J_{13}).
\end{align*}
By the anisotropic inequality (\ref{Ine1}), 
\begin{align}\label{J11}
J_{11}&=\int\na_h u_h\cdot \na_h u\cdot\na_hu\,dx+\int\na_h u_3\, \p_3 u\cdot\na_hu\,dx\nonumber\\
    &\le C\|\na_h u\|_{L^2}^{\f 12}  \|\p_2\na_h u\|_{L^2}^{\f 12}
     \|\na_h u\|_{L^2}^{\f 12} \|\p_3\na_h u\|_{L^2}^{\f 12}
    \|\na_h u\|_{L^2}^{\f 12}\|\p_1\na_h u\|_{L^2}^{\f 12}\nonumber\\
& \quad+ C\|\na_h u_3\|_{L^2}^{\f 12}  \|\p_3\na_h u_3\|_{L^2}^{\f 12}
     \|\p_3 u\|_{L^2}^{\f 12} \|\p_2\p_3 u\|_{L^2}^{\f 12}
    \|\na_h u\|_{L^2}^{\f 12}\|\p_1\na_h u\|_{L^2}^{\f 12}\nonumber\\
&\le C \|\na_h u\|_{L^2} \|\na_h^2 u \|_{L^2}  \|\na_h u\|_{H^1}
    + C\|\na_h u\|_{L^2}^{\f 12} \|\p_3 u\|_{L^2}^{\f 12} \|\na_h^2 u\|_{L^2}  \|\na_h u\|_{H^1}.
\end{align}
Therefore,
\begin{align}\label{IJ11}
\int_0^t (1+\tau)&J_{11}(\tau) d\tau
\le C\sup_{0\le \tau\le t} (1+\tau)^{\f 12}\|\na_h u(\tau)\|_{L^2}
\int_0^t (1+\tau)^{\f 12}\|\na_h^2 u(\tau)\|_{L^2}
     \|\na_h u(\tau)\|_{H^1} d\tau\nonumber\\
& + C \sup_{0\le \tau\le t} (1+\tau)^{\f 12}\|\na_h u(\tau)\|_{L^2}^\f 12 \|\p_3 u(\tau)\|_{L^2}^{\f 12}
     \int_0^t (1+\tau)^{\f 12}\|\na_h^2 u(\tau)\|_{L^2}
     \|\na_h u(\tau)\|_{H^1} d\tau\nonumber\\
&\le C  E_1^{\f12}(t)E_1^{\f12}(t) E_0^{\f12}(t)
      + E_2^{\f14}(t) E_0^{\f14}(t) E_1^{\f12}(t) E^{\f12}_0(t)\nonumber\\
&\le C E^{\f 32}(t).
\end{align}
Applying (\ref{Ine1}) again and using Sobolev's inequality, $J_{12}$ can be bounded as
\begin{align}\label{J12}
J_{12}&=\int\na_h^2 u\cdot \na u\cdot\na_h^2u\,dx+2\int\na_h u \cdot \na \na_h u\cdot\na_h^2u\,dx\nonumber\\
    &\le  \|\na u\|_{L^{\infty}}\|\na_h^2u\|_{L^2}^2
    + C\|\na_h u\|_{L^2}^{\f 12}  \|\p_2\na_h u\|_{L^2}^{\f 12}
     \|\na\na_h u\|_{L^2}^{\f 12} \|\p_3\na\na_h u\|_{L^2}^{\f 12}
    \|\na_h^2 u\|_{L^2}^{\f 12}\|\p_1\na_h^2 u\|_{L^2}^{\f 12}\nonumber\\
  &\le C\|\na u\|_{H^2}\|\na_h^2u\|_{L^2}^2
    + C\|\na_h u\|_{L^2}^{\f 12}  \|\na_h^2 u\|_{L^2}
    \|\na\na_h u\|_{L^2}^{\f 12} \|\na_h\na^2 u\|_{L^2}.
\end{align}
Thus,
\begin{align}\label{IJ12}
&\int_0^t (1+\tau)J_{12}(\tau) d\tau
 \le C\sup_{0\le \tau\le t} \|\na u(\tau)\|_{H^2} \int_0^t (1+\tau)\|\na_h^2 u(\tau)\|_{L^2}^2 d\tau\nonumber\\
&\qquad + C \sup_{0\le \tau\le t} (1+\tau)^{\f 12}\|\na_h u(\tau)\|_{L^2}^\f 12  \|\na\na_h u(\tau)\|_{L^2}^\f 12
     \int_0^t (1+\tau)^{\f 12}\|\na_h^2 u(\tau)\|_{L^2}
     \|\na_h\na^2 u(\tau)\|_{L^2} d\tau\nonumber\\
&\quad\le C  E_0^{\f12}(t) E_1(t)
     + C E_2^{\f14}(t) E_0^{\f14}(t) E_1^{\f12}(t) E_0^{\f12}(t)\nonumber\\
&\quad\le C E^{\f 32}(t).
\end{align}
The bound for $J_{13}$ is more complicated. We first decompose it as follows,
 \begin{align*}
J_{13}&=\int\na_h\p_3 u\cdot \na u\cdot\na_h\p_3u\,dx+\int\na_h u \cdot \na \p_3 u\cdot\na_h\p_3u dx
        + \int\p_3 u \cdot \na \na_h u\cdot\na_h\p_3u\,dx\\
&\le 3\int |\na_h u|\, |\p_3\na_h u|^2 \, dx  +2\int |\p_3 u|\, |\na_h^2u| \, |\p_3\na_h u| \, dx
      +\int |\na_h u_3|\,|\p_3^2 u| \, |\p_3\na_h u| \, dx\\
&:= J_{131} + J_{132} +J_{133}.
\end{align*}
By means of (\ref{Ine1}) and (\ref{Ine2}), 
\begin{align}
J_{131}
&\le     C\|\na_h u\|_{L^2}^{\f 12}  \|\p_2\na_h u\|_{L^2}^{\f 12}
        \|\p_3\na_h u\|_{L^2}^{\f 12} \|\p_1\p_3\na_h u\|_{L^2}^{\f 12}
        \|\p_3\na_h u\|_{L^2}^{\f 12} \|\p_3^2\na_h u\|_{L^2}^{\f 12}\nonumber\\
&\le     C \|\na_h u\|_{L^2}^{\f 12}  \|\p_2\na_h u\|_{L^2}^{\f 12}
        \|\p_3\na_h u\|_{L^2}^{\f 12}  \|\p_3\na_h u\|_{H^1}^{\f 32} ,\label{J131}\\
J_{132}&\le
        C\|\p_3 u\|_{L^2}^{\f 14}  \|\p_2\p_3 u\|_{L^2}^{\f 14} \|\p_3^2 u\|_{L^2}^{\f 14} \|\p_2\p_3^2 u\|_{L^2}^{\f 14}
        \|\na_h^2 u\|_{L^2} \|\p_3\na_h u\|_{L^2}^{\f 12} \|\p_1\p_3\na_h u\|_{L^2}^{\f 12}\nonumber\\
        &\le C \|\p_3 u\|_{H^1}^{\f 12}\|\p_2\p_3 u\|_{H^1}^{\f 12}\|\na_h^2 u\|_{L^2}
           \|\p_3\na_h u\|_{L^2}^{\f 12} \|\p_1\p_3\na_h u\|_{L^2}^{\f 12},  \label{J132}
 \end{align}
and
\begin{align}
J_{133}
&\le     C\|\na_h u_3\|_{L^2}^{\f 12}  \|\p_3\na_h u_3\|_{L^2}^{\f 12}
        \|\p_3^2 u\|_{L^2}^{\f 12} \|\p_2\p_3^2 u\|_{L^2}^{\f 12}
        \|\p_3\na_h u\|_{L^2}^{\f 12} \|\p_1\p_3\na_h u\|_{L^2}^{\f 12}\nonumber\\
&\le    C\|\na_h u\|_{L^2}^{\f 12}   \|\na_h^2 u\|_{L^2}^{\f 12}
        \|\p_3^2 u\|_{L^2}^{\f 12}   \|\p_3\na_h u\|_{H^1}   \|\p_1\p_3\na_h u\|_{L^2}^{\f 12}.\label{J133}
 \end{align}
Thereby,  applying  H\"{o}lder's inequality gives 
\begin{align}
\int_0^t (1+\tau) J_{131}(\tau) d \tau
 &\le C\sup _{0\le \tau\le t} (1+\tau)^{\f 12}\|\na_h u(\tau)\|_{L^2}^{\f 12} (1+\tau)^{\f 14} \|\p_3\na_h u(\tau)\|_{L^2}^{\f 12}\nonumber\\
       &\qquad\times \int_0^t  (1+\tau)^{\f 14}\|\p_2\na_h u (\tau)\|_{L^2}^{\f 12}  \|\p_3\na_h u(\tau)\|_{H^1}^{\f 32}\,d\tau \nonumber\\
 & \le C   E_2^{\f 14}(t) E_1^{\f 12}(t) E_0^{\f 34}(t)  \le C E^{\f 3 2}(t),\label{IJ131}\\
\int_0^t (1+\tau) J_{132}(\tau) d \tau
 &\le C\sup _{0\le \tau\le t} \|\p_3 u(\tau)\|_{H^1}^{\f 12} (1+\tau)^{\f 14} \|\p_3\na_h u(\tau)\|_{L^2}^{\f 12}
        \int_0^t  (1+\tau)^{\f 12}\|\na_h^2 u (\tau)\|_{L^2}\nonumber\\
      &\qquad\times(1+\tau)^{\f 14}\|\p_3\p_1\na_h u(\tau)\|_{L^2}^{\f 12}
        \|\p_2\p_3 u(\tau)\|_{H^1}^{\f 12}\,d\tau\label{IJ132}\\
 & \le C   E_0^{\f 14}(t) E_1(t) E_0^{\f 1 4}(t)  \le C E^{\f 32}(t),\nonumber\\
\int_0^t (1+\tau) J_{133}(\tau) d \tau
 &\le C\sup _{0\le \tau\le t}  (1+\tau)^{\f 12} \|\na_h u(\tau)\|_{L^2}^{\f 12} \|\p_3^2 u(\tau)\|_{L^2}^{\f 12}
        \int_0^t  (1+\tau)^{\f 14}\|\na_h^2 u (\tau)\|_{L^2}^{\f 12}\nonumber\\
      &\qquad\times(1+\tau)^{\f 14}\|\p_3\p_1\na_h u(\tau)\|_{L^2}^{\f 12}
        \|\p_3\na_h u(\tau)\|_{H^1}\,d\tau \nonumber\\
 & \le C   E_2^{\f 14}(t) E_0^{\f 14}(t) E_1^{\f 1 2}(t) E_0^{\f 1 2}(t)
 \le C E^{\f 32}(t).\label{IJ133}
\end{align}
Adding (\ref{IJ131}), (\ref{IJ132}) and (\ref{IJ133}) yields
\begin{align}\label{IJ13}
\int_0^t (1+\tau) J_{13}(\tau) d \tau \le C  E^{\f 32}(t).
\end{align}
Consequently, according to the estimates (\ref{IJ11}), (\ref{IJ12}) and (\ref{IJ13}), we derive
\begin{align}\label{J1}
\int_0^t  J_{1}(\tau) d \tau \le C E^{\f 32}(t).
\end{align}
In the following, we handle $J_3$. The terms $J_2$ and $J_4$ will be estimated together later.
Firstly,
\begin{align*}
J_3&=-(1+t)\Big(\int\na_h(u\cdot \na b)\cdot\na_hb\,dx +\int\na_h^2(u\cdot \na b)\cdot\na_h^2 b\,dx\\
&\qquad\qquad\quad  + \int\na_h\p_3(u\cdot \na b)\cdot\na_h\p_3b\,dx\Big)
\\
   &:=-(1+t)(J_{31}+J_{32}+J_{33}).
\end{align*}
Invoking (\ref{J11}) and (\ref{J12}),
we have
\begin{align*}
J_{31}&=\int\na_h u_h\cdot \na_h b\cdot\na_hb\,dx+\int\na_h u_3\, \p_3 b\cdot\na_hb\,dx\\
    &\le C\|\na_h u\|_{L^2}^{\f 12}  \|\p_2\na_h u\|_{L^2}^{\f 12}
     \|\na_h b\|_{L^2}^{\f 12} \|\p_3\na_h b\|_{L^2}^{\f 12}
    \|\na_h b\|_{L^2}^{\f 12}\|\p_1\na_h b\|_{L^2}^{\f 12}\\
& \quad+ C\|\na_h u_3\|_{L^2}^{\f 12}  \|\p_3\na_h u_3\|_{L^2}^{\f 12}
     \|\p_3 b\|_{L^2}^{\f 12} \|\p_2\p_3 b\|_{L^2}^{\f 12}
    \|\na_h b\|_{L^2}^{\f 12}\|\p_1\na_h b\|_{L^2}^{\f 12},\\
J_{32}&=\int\na_h^2 u\cdot \na b\cdot\na_h^2b\,dx+2\int\na_h u \cdot \na \na_h b\cdot\na_h^2b\,dx\\
    & \le C\|\na b\|_{H^2}\|\na_h^2u\|_{L^2}\|\na_h^2b\|_{L^2}
    + C\|\na_h u\|_{L^2}^{\f 12}  \|\p_2\na_h u\|_{L^2}^{\f 12}
     \|\na\na_h b\|_{L^2}^{\f 12} \|\p_3\na\na_h b\|_{L^2}^{\f 12}
    \|\na_h^2 b\|_{L^2}^{\f 12}\|\p_1\na_h^2 b\|_{L^2}^{\f 12}.
\end{align*}
Then a similar argument to (\ref{IJ11}) and (\ref{IJ12}) gives
\begin{align*}
\int_0^t (1+\tau)(J_{31}+J_{32})(\tau) d\tau
&\le  C E^{\f 32}(t).
\end{align*}
For $J_{33}$, we still reformulate it into several integrals
 \begin{align*}
J_{33}
&\le 2\int |\na_h u|\, |\p_3\na_h b|^2 \, dx  + \int |\na_h b|\, |\p_3\na_h u_h| |\p_3\na_h b| \, dx\\
&+   \int |\p_3 b|\, |\na_h^2u| \, |\p_3\na_h b| \, dx + \int |\p_3 u|\, |\na_h^2b| \, |\p_3\na_h b| \, dx
+   \int |\na_h u_3| \,|\p_3^2 b|\, |\p_3\na_h b|  \, dx\\
&:= 2J_{331} + \cdots +J_{335}.
\end{align*}
Going through a similar process as in $J_{13}$, we are able to establish the bound for $J_{33}$. Recalling (\ref{J131}), we have
\begin{align*}
&J_{331}
\le     C\|\na_h u\|_{L^2}^{\f 12}  \|\p_2\na_h u\|_{L^2}^{\f 12}
        \|\p_3\na_h b\|_{L^2}^{\f 12} \|\p_1\p_3\na_h b\|_{L^2}^{\f 12}
        \|\p_3\na_h b\|_{L^2}^{\f 12} \|\p_3^2\na_h b\|_{L^2}^{\f 12}.
 \end{align*}
Then
\begin{align*}
&\int_0^t (1+\tau) J_{331}(\tau) d \tau
 \le C\sup _{0\le \tau \le t} (1+\tau)^{\f 12}\|\na_h u(\tau)\|_{L^2}^{\f 12} (1+\tau)^{\f 14} \|\p_3\na_h b(\tau)\|_{L^2}^{\f 12}\nonumber\\
  &\qquad\times\int_0^t  (1+\tau)^{\f 14}\|\p_2\na_h u (\tau)\|_{L^2}^{\f 12}  \|\p_3\na_h b(\tau)\|_{H^1}^{\f 32}\, d\tau
 \le C  E^{\f 32}(t).
\end{align*}
As in (\ref{J132}) and (\ref{IJ132}), $J_{333}$ can be bounded by
\begin{align*}
\int_0^t (1+\tau) J_{333}(\tau) d \tau
&\le C\sup _{0\le \tau \le t} \|\p_3 b(\tau)\|_{H^1}^{\f 12} (1+\tau)^{\f 14} \|\p_3\na_h b(\tau)\|_{L^2}^{\f 12}
        \int_0^t  (1+\tau)^{\f 12}\|\na_h^2 u (\tau)\|_{L^2}\nonumber\\
      &\times(1+\tau)^{\f 14}\|\p_3\p_1\na_h b(\tau)\|_{L^2}^{\f 12}
        \|\p_3\p_2 b(\tau)\|_{H^1}^{\f 12}\, d\tau
  \le  C E^{\f 3 2}(t).
\end{align*}
Also, from (\ref{J133}) and (\ref{IJ133}), we get
\begin{align*}
&\int_0^t (1+\tau) J_{335}(\tau) d \tau
 \le C\sup _{0\le \tau \le t}  (1+\tau)^{\f 12} \|\na_h u(\tau)\|_{L^2}^{\f 12} \|\p_3^2 b(\tau)\|_{L^2}^{\f 12}
        \int_0^t  (1+\tau)^{\f 14}\|\na_h\p_3 u_3 (\tau)\|_{L^2}^{\f 12}\nonumber\\
      &\times(1+\tau)^{\f 14}\|\p_3\p_1\na_h b(\tau)\|_{L^2}^{\f 12}
        \|\p_3\na_h b(\tau)\|_{H^1}\, d\tau
  \le  C  E^{\f 32}(t).
\end{align*}
The rest terms $J_{332}$ and $J_{334}$ can be handled as $J_{331}$ and $J_{333}$, respectively. Thus, we have
\begin{align*}
\int_0^t (1+\tau) (J_{332}(\tau)+J_{334}(\tau))\, d \tau
 \le  C E^{\f 32}(t).
\end{align*}
Consequently, we derive
\begin{align*}
\int_0^t (1+\tau) J_{33}(\tau)d \tau
 \le C E^{\f 32}(t).
\end{align*}
Combining all estimates above for $J_{31}$ through $J_{33}$, we conclude
\begin{align}\label{J3}
\int_0^t  J_{3}(\tau)d \tau
 \le C E^{\f 32}(t).
\end{align}

\vskip .1in
Finally we bound $J_{2}$ and $J_{4}$.  $J_{2}$ and $J_{4}$ can be estimated with a nearly same argument as $J_1$ and $J_3$, respectively. We shall just sketch the proof. By integration by parts and the divergence-free condition,
we split $J_{2}$ and $J_{4}$ into three parts as follows.
\begin{align*}
J_2+J_4:=(J_{21}+J_{22}+J_{23})(1+t),
\end{align*}
where
\begin{align*}
J_{21}&=\int (\na_h b \cdot\na b\cdot \na_h u +\na_h b \cdot\na u\cdot \na_h b)\,dx,\\
J_{22}&=\int \Big[(\na\na_h b \cdot\na) b\cdot \na\na_h u +(\na b \cdot\na)\na_h b\cdot \na\na_h u+(\na_h b \cdot\na)\na b\cdot \na\na_h u\Big]\, dx ,\\
J_{23}&=\int \Big[(\na\na_h b \cdot\na) u\cdot \na\na_h b +(\na b \cdot\na)\na_h u\cdot \na\na_h b+(\na_h b \cdot\na)\na u\cdot \na\na_h b\Big]\, dx.
\end{align*}
It is easy to verify that
\begin{align}\label{J2J4}
\int_0^t ( J_{2}(\tau)+J_{4}(\tau)) d \tau
 \le  C E^{\f 32}(t).
\end{align}
According to (\ref{J11}) and (\ref{IJ11}), 
\begin{align*}
\int_0^t (1+\tau) J_{21}(\tau)d \tau
 \le  C E^{\f 32}(t).
\end{align*}
For $J_{22}$, we further divide it into two parts
\begin{align*}
J_{22}&=\int\big(\na_h^2 b\cdot \na b\cdot\na_h^2u+2\na_h b \cdot \na \na_h b\cdot\na_h^2u\big)\,dx\\
      &+ \int\big(\na_h\p_3 b\cdot \na b\cdot\na_h\p_3u+\p_3 b \cdot \na \na_h b\cdot\na_h\p_3u
        +\na_h b \cdot \na \p_3b\cdot\na_h\p_3u\big)\,dx\\
&:=J_{221}+J_{222}.
\end{align*}
As in (\ref{J12}) and (\ref{IJ12}) for $J_{12}$, 
\begin{align*}
\int_0^t (1+\tau)J_{221}(\tau) d\tau
\le C E^{\f 3 2}(t).
\end{align*}
For $J_{222}$, we have
 \begin{align*}
J_{222}
&\le 3\int |\na_h b|\, |\p_3\na_h b|\,|\p_3\na_h u| \, dx  +2\int |\p_3 b|\, |\na_h^2b| \, |\p_3\na_hu| \, dx\\
      &\quad +\int |\na_h b_3|\,|\p_3^2 b| \, |\p_3\na_h u| \, dx.
\end{align*}
Using the similarities between $J_{222}$ and $J_{131}, J_{132}$ and $J_{133}$, we can easily find
\begin{align*}
\int_0^t (1+\tau)J_{222}(\tau) d\tau
\le C E^{\f 32}(t).
\end{align*}
Therefore, 
\begin{align*}
\int_0^t (1+\tau)J_{22}(\tau) d\tau
\le C E^{\f 32}(t).
\end{align*}
To bound $J_{23}$, we decompose it into  
\begin{align*}
J_{23}
&\le \int(\na_h^2 b\cdot \na u\cdot\na_h^2b+2\na_h b \cdot \na \na_h u\cdot\na_h^2b)\,dx\\
&\quad+ J_{331} + 2J_{332}+J_{333} +J_{334}+ \int |\na_h b_3| \,|\p_3^2 u|\, |\p_3\na_h b|  \, dx.
\end{align*}
The first term and the last term are similar to $J_{32}$ and $J_{335}$, respectively. Thus,
\begin{align*}
\int_0^t (1+\tau)J_{23}(\tau) d\tau
\le C  E^{\f 32}(t).
\end{align*}
Integrating  (\ref{J0}) over $[0,t]$ and invoking (\ref{J1}), (\ref{J3}) and (\ref{J2J4}),  
we derive the desired estimate (\ref{Ghub}). This competes the proof of Lemma \ref{lem42}.
\end{proof}

\vskip .1in
We now turn to the second lemma.
\begin{lemma}\label{lem43}
Assume $(u, b)$ is a solution to (\ref{MHD1}). Then we have
\begin{align}\label{2Ghu}
&-(1+t)(\p_2\na_hu(t), \na_hb(t))+\f 12\int_0^t (1+\tau)\|\p_2\na_h u(\tau)\|_{L^2}^2d\tau\nonumber\\
&\qquad -\f 12\int_0^t (1+\tau)\Big(3\|\p_2\na_hb(\tau)\|_{L^2}^2+\mu^2\|\na_h\p_1^2 u(\tau)\|_{L^2}^2
+\eta^2\|\na_h\Delta_h b(\tau)\|_{L^2}^2\Big)d\tau\nonumber\\
&\quad\le CE(0)+\f 12 E_0(t)+ C E^{\f 32}(t).
\end{align}
\end{lemma}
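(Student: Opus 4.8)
The plan is to adapt the argument of Lemma \ref{lem33} to the level of the first-order horizontal derivatives, with the time weight $(1+t)$ built in from the outset; the output is precisely the time integrability of $(1+t)\|\p_2\na_hu\|_{L^2}^2$ that is needed to absorb the difficult term (\ref{Di}) and to feed Lemma \ref{lem42}. First I would apply $\na_h$ to the $u$- and $b$-equations in (\ref{MHD1}) and compute $\f{d}{dt}(\p_2\na_hu(t),\na_hb(t))$, i.e. pair the $\na_hu$-equation with $\p_2\na_hb$ and the $\na_hb$-equation with $\p_2\na_hu$. The coupling terms $\p_2\na_hb$ in the velocity equation and $\p_2\na_hu$ in the magnetic equation produce, after one integration by parts in $x_2$, exactly $-\|\p_2\na_hb\|_{L^2}^2$ and $+\|\p_2\na_hu\|_{L^2}^2$, while the pressure contributes $-(\p_2\na\na_hP,\na_hb)$, which vanishes after moving the full gradient onto the divergence-free $\na_hb$. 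As in (\ref{D2uH2}) this yields
\begin{align*}
-\f{d}{dt}(\p_2\na_hu,\na_hb)+\|\p_2\na_hu\|_{L^2}^2-\|\p_2\na_hb\|_{L^2}^2
&=-\mu(\p_2\p_1^2\na_hu,\na_hb)-\eta(\p_2\na_hu,\na_h\Delta_hb)+\mathcal{K},
\end{align*}
where $\mathcal{K}$ collects the four nonlinear pairings $(\p_2\na_h(u\cdot\na u),\na_hb)$, $-(\p_2\na_h(b\cdot\na b),\na_hb)$, $(\p_2\na_hu,\na_h(u\cdot\na b))$ and $-(\p_2\na_hu,\na_h(b\cdot\na u))$. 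Multiplying by $(1+t)$, using $\f{d}{dt}\big[(1+t)g\big]=g+(1+t)\f{d}{dt}g$ and integrating over $[0,t]$ produces the stated boundary term $-(1+t)(\p_2\na_hu(t),\na_hb(t))$ (its value at $\tau=0$ absorbed into $CE(0)$), the weighted integrals $\int_0^t(1+\tau)\|\p_2\na_hu\|_{L^2}^2$ and $-\int_0^t(1+\tau)\|\p_2\na_hb\|_{L^2}^2$, and one extra lower-order term $-\int_0^t(\p_2\na_hu,\na_hb)\,d\tau$.

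The two linear right-hand terms are handled exactly as $I_9,I_{10}$ in Lemma \ref{lem33}: integrating $\p_2$ by parts and applying Young's inequality gives $-\mu(1+t)(\p_2\p_1^2\na_hu,\na_hb)\le \mu^2(1+t)\|\na_h\p_1^2u\|_{L^2}^2+\f14(1+t)\|\p_2\na_hb\|_{L^2}^2$ and $-\eta(1+t)(\p_2\na_hu,\na_h\Delta_hb)\le \f14(1+t)\|\p_2\na_hu\|_{L^2}^2+\eta^2(1+t)\|\na_h\Delta_hb\|_{L^2}^2$; together with the $-\|\p_2\na_hb\|_{L^2}^2$ already present and a further $\tfrac14\|\p_2\na_hb\|_{L^2}^2$ arising below, these produce the block $-\tfrac12\int_0^t(1+\tau)\big(3\|\p_2\na_hb\|_{L^2}^2+\mu^2\|\na_h\p_1^2u\|_{L^2}^2+\eta^2\|\na_h\Delta_hb\|_{L^2}^2\big)$ kept on the left. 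These negative terms are harmless, since $\|\na_h\p_1^2u\|_{L^2}\le\|\p_1\na_hu\|_{H^1}$ and $\|\na_h\Delta_hb\|_{L^2}\le\|\Delta_hb\|_{H^1}$, so they will be absorbed by the genuine weighted dissipation of Lemma \ref{lem42} once the two are combined with a small parameter in Proposition \ref{pro2}. For the extra term I would integrate $\p_2$ by parts and use Young's inequality, $-\int_0^t(\p_2\na_hu,\na_hb)\,d\tau=\int_0^t(\na_hu,\p_2\na_hb)\,d\tau\le \f14\int_0^t\|\p_2\na_hb\|_{L^2}^2\,d\tau+C\int_0^t\|\na_hu\|_{L^2}^2\,d\tau$; since $\int_0^t\|\na_hu\|_{L^2}^2\,d\tau\le \int_0^t\big(\|\p_1u\|_{H^3}^2+\|\p_2u\|_{H^2}^2\big)\,d\tau\le E_0(t)$, this is exactly the origin of the $\tfrac12E_0(t)$ term on the right of (\ref{2Ghu}).

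The bulk of the work, and the main obstacle, is the bound $\int_0^t(1+\tau)\,\mathcal{K}(\tau)\,d\tau\le CE^{3/2}(t)$. After expanding $\na_h(u\cdot\na u)=\na_hu\cdot\na u+u\cdot\na\na_hu$ (and likewise for the three other products) and using $\na\cdot u=\na\cdot b=0$ to replace any naked vertical derivative of a third component by $\p_3u_3=-\p_1u_1-\p_2u_2$ and $\p_3b_3=-\p_1b_1-\p_2b_2$, every resulting triple product is estimated by the anisotropic inequalities (\ref{Ine1})--(\ref{Ine3}) of Lemma \ref{lem21}, which trade the missing vertical velocity dissipation for an $x_1$- or $x_2$-derivative. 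Then, following the bookkeeping of $J_1,\dots,J_4$ in Lemma \ref{lem42}, I would split each weight $(1+\tau)$ between a supremum factor and an integral factor with exponents chosen so that the pieces match the dissipative integrals in $E_0$, the weighted integrals in $E_1$, and the decay rates recorded in $E_2$; Hölder in time then bounds each term by a product of fractional powers of $E_0,E_1,E_2$ summing to $3/2$, plus, where needed, a tiny multiple of the weighted quantities on the left which is safely absorbed. Many of these terms are simpler than in Lemma \ref{lem33} (only one derivative is present), and several are literally the $J$-type integrals already estimated in Lemma \ref{lem42}. The genuinely delicate pieces are those carrying $\p_3$-derivatives of the velocity — coming from $u\cdot\na\na_hu$ and from the substitution for $\p_3u_3$ — since $u$ has no vertical dissipation; handling them is precisely where the weighted functional $E_1$ (whose $\int_0^t(1+\tau)\|\p_2\na_hu\|_{L^2}^2$ piece this very lemma supplies) and the sharp decay rates stored in $E_2$ are indispensable. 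Collecting everything gives (\ref{2Ghu}).
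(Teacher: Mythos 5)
Your proposal follows essentially the same route as the paper: the weighted cross term $(1+t)(\p_2\na_hu,\na_hb)$, the same energy identity with the lower-order term bounded through $E_0(t)$, Young's inequality for the two linear viscous/diffusive pairings kept as negative terms on the left, and the anisotropic inequalities of Lemma \ref{lem21} with time-weight splitting among $E_0,E_1,E_2$ for the four nonlinear pairings. The only differences are cosmetic bookkeeping (your Young splittings give coefficients $\mu^2,\eta^2$ and $E_0(t)$ instead of the stated $\tfrac{\mu^2}{2},\tfrac{\eta^2}{2}$ and $\tfrac12 E_0(t)$, trivially fixed by rebalancing the Young constants), which does not affect the argument or its use in Proposition \ref{pro2}.
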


\begin{proof}[Proof of Lemma \ref{lem43}]
As in (\ref{D2uH2}), we have
\begin{align*}
&-\f {d}{dt}(1+t)(\p_2\na_h u, \na_hb)+ (1+t) \|\p_2\na_h u\|_{L^2}^2- (1+t) \|\p_2\na_h b\|_{L^2}^2\nonumber\\
&=-(\p_2\na_h u, \na_hb) + (1+t)\big(\p_2\na_h(u\cdot\na u), \na_hb\big)-(1+t)\big(\p_2\na_h(b\cdot\na b\big), \na_h b)\nonumber\\
 &\quad +(1+t)\big(\p_2\na_hu, \na_h(u\cdot\na b)\big) -(1+t)\big(\p_2\na_hu, \na_h(b\cdot\na u)\big)\nonumber\\
 &\quad -\mu(1+t)(\p_2\na_h\p_1^2u, \na_hb)-\eta(1+t)(\p_2\na_hu, \na_h\Delta_h b)\nonumber\\
&:= J_5+\cdots+J_{11},
\end{align*}
where $(F,G)$ denotes the $L^2$-inner product of $F$ and $G$. It is clear that
\begin{align*}
&\int_0^t J_{5}(\tau)d\tau \le \f 12 \int_0^t (\|\p_2\na_h u(\tau)\|_{L^2}^2+\|\na_h b(\tau)\|_{L^2}^2)d\tau\le \f 12 E_0(t),\\
&J_{10}=\mu(1+t)(\na_h\p_1^2u, \p_2\na_hb)\le\f {\mu^2} {2}(1+t)\|\p_1^2\na_h u\|_{L^2}^2+\f 12(1+t)\|\p_2\na_h b\|_{L^2}^2,\\
&J_{11}\le\f 12 (1+t)\|\p_2\na_h u\|_{L^2}^2+\f {\eta^2}{2}(1+t)\|\Delta_h\na_h b\|_{L^2}^2.
\end{align*}
Next we bound the nonlinear integral terms. We mainly focus on $J_6$ and $J_8$. The estimates for $J_7$ and $J_9$ can be established similarly.
 By integration by parts and (\ref{Ine1}), we have
\begin{align*}
J_{6}&=-(1+t)\int \na_h u\cdot \na u  \cdot \p_2\na_hb\, dx-(1+t)\int  u\cdot \na\na_h u \cdot\p_2\na_hb\, dx \\
&\le C(1+t)\|\na_h u\|_{L^2}^{\f 12}  \|\p_1\na_h u\|_{L^2}^{\f 12}
        \|\na u\|_{L^2}^{\f 12} \|\p_2\na u\|_{L^2}^{\f 12}
        \|\p_2\na_h b\|_{L^2}^{\f 12} \|\p_3\p_2\na_h b\|_{L^2}^{\f 12}\\
 &  + C(1+t)\|u\|_{L^2}^{\f 12}  \|\p_2 u\|_{L^2}^{\f 12}
        \|\na\na_h u\|_{L^2}^{\f 12} \|\p_1\na\na_h u\|_{L^2}^{\f 12}
        \|\p_2\na_h b\|_{L^2}^{\f 12} \|\p_3\p_2\na_h b\|_{L^2}^{\f 12}.
\end{align*}
Furthermore, 
\begin{align*}
\int_0^t J_{6}(\tau) d\tau
&\le C \sup_{0\le t\le \tau}(1+\tau)^{\f 12}\|\na_h u(\tau)\|_{L^2}^{\f 12} \|\na u(\tau)\|_{L^2}^{\f 12}
       \int_0^t \|\na_h\na u(\tau)\|_{L^2}\\
       &\qquad\times(1+\tau)^{\f 12}\|\p_2\na_h b(\tau)\|_{H^1} d\tau\\
&  + C   \sup_{0\le t\le \tau}(1+\tau)^{\f 12}\|\p_2 u(\tau)\|_{L^2}^{\f 12} \|u(\tau)\|_{L^2}^{\f 12}
       \int_0^t   \|\na_h\na u(\tau)\|_{H^1} \\
       &\qquad\times(1+\tau)^{\f 12}\|\p_2\na_h b(\tau)\|_{H^1} d\tau\\
 & \le C E_2^{\f 14}(t) E_0^{\f 3 4}(t) E_1^{\f 12}(t)\le CE^{\f 32}(t).
\end{align*}
Similarly, we can bound $J_7$ as
\begin{align*}
\int_0^t J_{7}(\tau) d\tau \le CE^{\f 32}(t).
\end{align*}
For $J_8$,  applying the anisotropic inequality (\ref{Ine2}) yields
 \begin{align*}
J_{8}&=(1+t)\int \na_h u\cdot \na b  \cdot\p_2\na_hu\, dx+(1+t)\int  u\cdot \na\na_h b \cdot\p_2\na_hu\, dx \\
&\le C(1+t)\|\na_h u\|_{L^2}^{\f 12}  \|\p_2\na_h u\|_{L^2}^{\f 12}
        \|\na b\|_{L^2}^{\f 14} \|\p_1\na b\|_{L^2}^{\f 14}\|\p_3\na b\|_{L^2}^{\f 14} \|\p_1\p_3\na b\|_{L^2}^{\f 14}
        \|\p_2\na_h u\|_{L^2} \\
 &  \quad+C (1+t)\|u\|_{L^2}^{\f 14} \|\p_2 u\|_{L^2}^{\f 14}\|\p_3u\|_{L^2}^{\f 14} \|\p_2\p_3u\|_{L^2}^{\f 14}
        \|\na\na_h b\|_{L^2}^{\f 12}  \|\p_1\na\na_h b\|_{L^2}^{\f 12}
        \|\p_2\na_h u\|_{L^2}.
\end{align*}
Thus, 
\begin{align*}
\int_0^t J_{8}(\tau) d\tau
&\le C \sup_{0\le t\le \tau}(1+\tau)^{\f 14}\|\na_h u(\tau)\|_{L^2}^{\f 12} \|\na b(\tau)\|_{H^1}^{\f 12}
       \int_0^t (1+\tau)^{\f 34}\|\p_2\na_h u(\tau)\|_{L^2}^{\f 32} \|\p_1\na b(\tau)\|_{H^1}^{\f 12}   d\tau   \\
&  + C   \sup_{0\le t\le \tau}(1+\tau)^{\f 14}\|\na\na_h b(\tau)\|_{L^2}^{\f 12} \|u(\tau)\|_{H^1}^{\f 12}
        \int_0^t  (1+\tau)^{\f 14}\|\p_1\na_h\na b(\tau)\|_{L^2}^{\f 12}\\
        &\qquad\times(1+\tau)^{\f 12}\|\p_2\na_h u(\tau)\|_{L^2}
        \|\p_2 u(\tau)\|_{H^1}^{\f 12} d\tau \\
 & \le C E_1(t) E_0^{\f 1 2}(t) \le CE^{\f 3 2}(t).
\end{align*}
Also,
\begin{align*}
\int_0^t J_{9}(\tau) d\tau \le CE^{\f 3 2}(t).
\end{align*}
Collecting all the estimates for $J_5$ through $J_{11}$, and integrating over $[0,t]$, we derive the desired bound (\ref{2Ghu}). This completes the proof of lemma \ref{lem43}.
\end{proof}

\vskip .1in
We now putting together the two lemmas above to obtain Proposition \ref{pro2}.
\begin{proof}[Proof of Proposition \ref{pro2}]
According to Lemma \ref{lem42} and \ref{lem43}, the combination $(\ref{Ghub})+\lambda_1(\ref{2Ghu})$
yields
\begin{align*}
&(1+t)\big(\|\na_h u(t)\|_{H^1}^2+\|\na_h b(t)\|_{H^1}^2-\lambda_1(\p_2\na_hu, \na_hb)\big)\\
&\qquad +\int_0^t (1+\tau)\Big[\big(2\mu-\f{\mu^2}{2}\lambda_1\big)\|\p_1\na_h u(\tau)\|_{H^1}^2\\
&\qquad\qquad +\f {\lambda_1} {2} (1+\tau)\|\p_2\na_h u(\tau)\|_{L^2}^2
+\big(2\eta-\f 32\lambda_1 - \f {\eta^2}{2}\lambda_1\big)\|\Delta_h b(\tau)\|_{H^1}^2\Big]\, d\tau\nonumber\\
&\quad\le CE(0)+C E_0(t)+C E^{\f 32}(t),
\end{align*}
where $\lambda_1$ is a parameter. If $\lambda_1$ is sufficiently small, then 
\begin{align*}
&(1+t)\Big(\|\na_hu(t)\|_{H^1}^2+\|\na_h b(t)\|_{H^1}^2\Big)\\
&+\int_0^t(1+\tau) \Big(\|\p_1\na_h u(\tau)\|_{H^1}^2 +\|\p_2\na_h u(\tau)\|_{L^2}^2+\|\na_h^2 b(\tau)\|_{H^1}^2\Big)d \tau\nonumber\\
&\le C E(0)+C E_0(t)+ C E^{\f 32}(t).
\end{align*}
This completes the proof of Proposition \ref{pro2}.
\end{proof}

\vskip .3in
\section{Estimate for $E_2(t)$}
\label{sec5}

This section establishes the {\it a priori} inequality \eqref{e2b} for $E_2(t)$. That is, we prove the following proposition.

\begin{proposition}\label{prop3}
Let $(u,b)$ be a  solution to the system (\ref{MHD1}). Then it holds
\begin{align}\label{E2}
E_2(t)&\le  C \Big(E^{\f 32}(t)+ E^2(t)\Big)
+C\Big(\|(u_0,b_0)\|_{H^2}^2+\|(u_0,b_0)\|_{L^2_{x_3}L^1_{x_1x_2}}^2\nonumber\\
&+\|(\p_3u_0,\p_3b_0)\|_{L^2_{x_3}L^1_{x_1x_2}}^2+\|(\p_3^2u_0,\p_3^2b_0)\|_{L^2_{x_3}L^1_{x_1x_2}}^2\Big).
\end{align}
\end{proposition}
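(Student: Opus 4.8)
The plan is to recast the nonlinear system \eqref{MHD1} as an integral equation built from the linear semigroup associated with \eqref{lin}, and then to extract the time-decay rates of the various norms composing $E_2(t)$ from sharp pointwise bounds on the kernels in Fourier space. First I would solve the linear system \eqref{lin}: taking the Fourier transform in $x$ and using $\nabla\cdot u=\nabla\cdot b=0$, the evolution of $(\hat u(\k,t),\hat b(\k,t))$ is governed by a $2\times 2$ matrix whose entries depend on the anisotropic symbols $\mu k_1^2$ (from $\mu\p_1^2$), $\eta(k_1^2+k_2^2)$ (from $\eta\Delta_h$) and the coupling $ik_2$ (from $\pm\p_2$). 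Diagonalizing this matrix yields eigenvalues $\lambda_{\pm}(\k)$ and hence kernel functions $K_1(\k,t),K_2(\k,t),K_3(\k,t)$ (the off-diagonal coupling kernel and the two diagonal ones), analogous to those used in \cite{WuZhu,JiWuYang}. Writing Duhamel's formula for \eqref{MHD1}, the solution is the linear evolution of $(u_0,b_0)$ plus time integrals of these kernels applied to the nonlinear terms $\mathbb P(u\cdot\na u)$, $\mathbb P(b\cdot\na b)$, $u\cdot\na b$, $b\cdot\na u$ (here $\mathbb P$ is the Leray projection, which is bounded on every $L^2$-based space).

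Second, I would carry out the spectral analysis: partition the frequency space $\R^3_\k$ into subregions according to the relative sizes of $|k_1|$, $|k_2|$, $|k_3|$ (and $|k|$ small versus large), because the eigenvalues degenerate differently in each regime — in particular $\mathrm{Re}\,\lambda_{\pm}$ behaves like $\mu k_1^2+\eta k_h^2$ modified by the $k_2^2$ enhanced-dissipation term coming from the wave structure \eqref{wv}, and the degeneracy along the $k_3$-axis is exactly why the $\p_3$-derivative norms decay slowly (rates $-\tfrac12+\varepsilon$, $-\tfrac14$) while the $\p_1\p_j$-derivative norms decay fast. In each subregion I would establish pointwise upper bounds of the form $|K_i(\k,t)|\le C e^{-c\,m(\k)\,t}$ (or with polynomial prefactors) where $m(\k)$ is the appropriate combination of frequencies, and then combine these with the $L^2_{x_3}L^1_{x_1x_2}$ hypothesis on the data (via the Hausdorff–Young-type bound $\|\hat f\|_{L^\infty_{k_1k_2}L^2_{k_3}}\le \|f\|_{L^2_{x_3}L^1_{x_1x_2}}$, which is the reason for that assumption) to get the linear part of each norm in $E_2(t)$ with the stated decay rate. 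The nonlinear contributions are then controlled by applying the same kernel bounds inside the Duhamel integral, estimating the nonlinear terms via the anisotropic product inequalities \eqref{Ine1}–\eqref{fg} of Lemma \ref{lem21}, and bounding everything by $E_0,E_1,E_2$; this is where the $E^{3/2}(t)+E^2(t)$ on the right of \eqref{E2} is produced, the quadratic term $E^2(t)$ arising because in the worst frequency regime one must split a nonlinear factor into two pieces each already carrying decay.

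Third, I would organize the term-by-term verification by the order and direction of derivatives: start with the undifferentiated norm $\|(u,b)\|_{L^2}$ (rate $(1+t)^{-1/2}$), then $\|\na_h(u,b)\|_{L^2}$ (rate $(1+t)^{-1}$), then the mixed second derivatives $\p_1\p_j$, $\p_2\p_j$, $\p_1\p_3$, and finally the two vertical-derivative norms $\|\p_3(u,b)\|_{L^2}$ and $\|\p_3^2(u,b)\|_{L^2}$, at each stage feeding the already-established faster-decaying quantities into the Duhamel estimates for the slower ones (a bootstrapping within the bootstrap). At the end one assembles all the weighted suprema into $E_2(t)$ and reads off \eqref{E2}, noting that only the $H^2$-norm of the data (not $H^3$) enters because $E_2$ involves at most two derivatives.

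I expect the main obstacle to be the spectral analysis in the degenerate regions — precisely, obtaining kernel bounds that are simultaneously sharp enough to yield the exact exponents in Theorem \ref{main} and robust enough to survive the convolution-type estimates in the Duhamel integral. The $x_2$-direction enhanced dissipation furnished by \eqref{wv} is only ``half a derivative'' strong (as emphasized after the definition of $E^{(2)}_0(t)$), so near the $k_3$-axis and near $\{k_1=k_2=0\}$ the real part of the eigenvalues nearly vanishes, and a careless bound there would either lose the decay rate or fail to close the nonlinear estimate; the delicate point is to exploit the interplay between the oscillatory (imaginary) part of $\lambda_\pm$ and the weak dissipation, together with the $L^1_{x_1x_2}$-integrability of the data, to recover just enough. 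A secondary technical difficulty is keeping the bookkeeping of which norm's decay is used to control which nonlinear term consistent across the many subcases so that the final inequality genuinely closes with the powers of $E$ shown.
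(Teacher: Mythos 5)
Your plan follows essentially the same route as the paper: recast \eqref{MHD1} as a Duhamel integral via the Fourier-side diagonalization of the linear operator, bound the three kernel functions by splitting the frequency space (Proposition \ref{prop4}), and then estimate each norm in $E_2(t)$ term by term using the $L^2_{x_3}L^1_{x_1x_2}$ data assumption together with the anisotropic inequalities of Lemma \ref{lem21} and the convolution bound of Lemma \ref{lem53}. The only small correction is that the symbols depend only on $(\xi_1,\xi_2)$, so the paper's partition is governed by the discriminant $\Gamma$ and the relative sizes of $|\xi_1|$ and $|\xi_2|$ alone ($\xi_3$ plays no role), and no genuine use of the oscillatory part of the eigenvalues is needed—crude bounds on $\mathrm{Re}\,\lambda_{1,2}$ suffice.
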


We remark that energy estimates are no longer sufficient for the proof of \eqref{E2}.
We resort to the integral representation of (\ref{MHD1}). To convert (\ref{MHD1}) into an integral
representation, we take the Fourier transform of (\ref{MHD1}), solve the linearized system and 
represent the nonlinear system into an integral form via Duhamel's principle. 
The integral representation involves three key kernel functions, which are degenerate 
and anisotropic. Due to the anisotropicity, we divide the frequency space into subdomains 
to obtain sharp upper bounds on the kernel functions. This is done in Proposition \ref{prop4}. 
Once these bounds are at our disposal, we then estimate the $L^2$-norms of $(u, b)$ 
and its derivatives via the integral representation. For the sake of clarity, 
we divide the rest of this section into two subsections.

\vskip .1in
\subsection{Integral representation and bounds for the kernels}

The subsection derives the integral representation of (\ref{MHD1}) and establishes 
optimal upper bounds for the kernel functions. First we recall two basic tools. 
The first one specifies the decay rate of a general heat operator associated with a fractional
Laplacian operator. Here the fractional Laplacian operator can be defined through the Fourier transform
$$
\widehat{\Lambda^\alpha f} (\xi) =|\xi|^{\alpha}\widehat{f}(\xi).
$$
The decay rate is stated in the following lemma, whose proof can be found in many references (see, e.g., \cite{Wu}).

\begin{lemma}\label{lem52}
	Assume $\alpha \ge 0$ and $\beta>0$ are real numbers. Let $1\le p\le q\le\infty$. Then there
	exists a constant $C>0$ such that, for any $t>0$,
	\begin{align*}
    \|\Lambda^\alpha e^{-\Lambda^\beta t}f\|_{L^q(\mathbb R^d)}\le \,C\, t^{-\frac{\alpha}{\beta} - \frac{d}{\beta}(\frac1p-\frac1q)}\, \|f\|_{L^p(\mathbb R^d)}.
	\end{align*}
\end{lemma}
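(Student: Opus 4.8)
\textbf{Proof proposal for Lemma \ref{lem52}.}

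The plan is to reduce everything to a convolution estimate and then apply Young's inequality for convolutions. Writing $G_{\alpha,\beta,t}$ for the kernel whose Fourier transform is $|\xi|^\alpha e^{-|\xi|^\beta t}$, we have $\Lambda^\alpha e^{-\Lambda^\beta t}f = G_{\alpha,\beta,t}*f$, so by Young's inequality with $1+\frac1q=\frac1r+\frac1p$ (i.e. $\frac1r = 1-(\frac1p-\frac1q)$),
\begin{align*}
\|\Lambda^\alpha e^{-\Lambda^\beta t}f\|_{L^q(\mathbb R^d)}\le \|G_{\alpha,\beta,t}\|_{L^r(\mathbb R^d)}\,\|f\|_{L^p(\mathbb R^d)}.
\end{align*}
Thus it suffices to show $\|G_{\alpha,\beta,t}\|_{L^r(\mathbb R^d)}\le C\,t^{-\frac\alpha\beta-\frac d\beta(\frac1p-\frac1q)}$, and since $\frac d\beta(\frac1p-\frac1q)=\frac d\beta(1-\frac1r)=\frac d\beta\cdot\frac1{r'}$, the target exponent is exactly $-\frac\alpha\beta-\frac d{\beta r'}$.

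The key step is a scaling argument. By the change of variables $\xi\mapsto t^{-1/\beta}\xi$ in the inverse Fourier transform one checks the self-similar identity
\begin{align*}
G_{\alpha,\beta,t}(x)=t^{-\frac{d+\alpha}{\beta}}\,G_{\alpha,\beta,1}\!\big(t^{-1/\beta}x\big),
\end{align*}
and therefore, computing the $L^r$ norm with the substitution $y=t^{-1/\beta}x$,
\begin{align*}
\|G_{\alpha,\beta,t}\|_{L^r(\mathbb R^d)}=t^{-\frac{d+\alpha}{\beta}}\,t^{\frac{d}{\beta r}}\,\|G_{\alpha,\beta,1}\|_{L^r(\mathbb R^d)}=t^{-\frac\alpha\beta-\frac d{\beta r'}}\,\|G_{\alpha,\beta,1}\|_{L^r(\mathbb R^d)},
\end{align*}
which is precisely the claimed power of $t$ provided the fixed constant $\|G_{\alpha,\beta,1}\|_{L^r(\mathbb R^d)}$ is finite.

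So the one real point to verify is that $G_{\alpha,\beta,1}\in L^r(\mathbb R^d)$ for every $r\in[1,\infty]$ — this is the main obstacle, though a mild one. Finiteness follows from two facts about $G_{\alpha,\beta,1}$, whose Fourier transform is $m(\xi)=|\xi|^\alpha e^{-|\xi|^\beta}$: first, $m\in L^1(\mathbb R^d)$ since it decays faster than any polynomial at infinity and is locally integrable near the origin (as $\alpha\ge0$), so $G_{\alpha,\beta,1}\in L^\infty$ and is continuous; second, $G_{\alpha,\beta,1}$ has rapid decay at spatial infinity. The decay can be obtained by the standard device of writing $|x|^{2N}G_{\alpha,\beta,1}(x)$ as the inverse Fourier transform of $(-\Delta_\xi)^N m(\xi)$, and observing that each $\xi$-derivative of $m$ is again a sum of terms of the form (smooth bounded function)$\times|\xi|^{\alpha-k}e^{-|\xi|^\beta}$ which remain integrable near $0$ once one restricts to, say, $|x|\ge1$ and chooses the region of integration appropriately (or simply notes these derivatives lie in $L^1_{\mathrm{loc}}$ with Schwartz decay at infinity, hence in $L^1$); this yields $|G_{\alpha,\beta,1}(x)|\le C_N(1+|x|)^{-2N}$ for all $N$, and choosing $2N>d$ makes $G_{\alpha,\beta,1}\in L^1$, hence in every $L^r$ by interpolation with $L^\infty$. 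Combining the convolution bound, the scaling identity, and this finiteness completes the proof; the borderline cases $p=q$ (where $r=1$) and $q=\infty$ are included automatically. A reference such as \cite{Wu} may be cited for the kernel bounds in lieu of reproducing them.
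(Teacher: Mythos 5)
The paper never proves this lemma; it only cites \cite{Wu}, so your proposal supplies strictly more detail than the source. Your route is the standard one and the bookkeeping is correct: writing the operator as convolution with $G_{\alpha,\beta,t}$, applying Young's inequality with $1+\frac1q=\frac1r+\frac1p$, and extracting the power of $t$ from the self-similar identity $G_{\alpha,\beta,t}(x)=t^{-(d+\alpha)/\beta}G_{\alpha,\beta,1}(t^{-1/\beta}x)$ does reduce everything to the single claim $G_{\alpha,\beta,1}\in L^r(\mathbb R^d)$ for all $r\in[1,\infty]$.

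That last claim is true, but your justification of it contains a false step. Bounding $|x|^{2N}G_{\alpha,\beta,1}(x)$ by $\|(-\Delta_\xi)^N m\|_{L^1}$ with $m(\xi)=|\xi|^\alpha e^{-|\xi|^\beta}$ does not work for large $N$: each application of $\Delta_\xi$ worsens the origin singularity, producing terms of size $|\xi|^{\alpha-2N}$ (and $|\xi|^{\beta-2N}$ when $\beta$ is not an even integer), which are not locally integrable once $2N\ge\alpha+d$. So the parenthetical assertion that these derivatives "lie in $L^1_{\mathrm{loc}}$" is wrong in general, and one cannot simply "choose $2N>d$" unless $\alpha$ is large or an even integer. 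The standard repair is to localize in frequency before integrating by parts: split $m$ with a smooth cutoff (or dyadically); the piece supported away from $\xi=0$ is Schwartz-class and its inverse transform decays rapidly, while the low-frequency piece, handled with only as many integrations by parts as the singularity permits (or via the dyadic sum), contributes a kernel bounded by $C\min\bigl(1,|x|^{-d-\gamma}\bigr)$ for some $\gamma>0$ determined by the first non-smooth term of $m$ at the origin. This gives $G_{\alpha,\beta,1}\in L^1\cap L^\infty$, hence every $L^r$, and the rest of your argument then closes the proof. Alternatively, delegating exactly this kernel estimate to \cite{Wu} is legitimate and is what the paper itself does.
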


The second tool is an elementary inequality providing upper bounds for a convolution type integral. Its proof is straightforward.

\begin{lemma} \label{lem53}
	Assume $0<s_1\le s_2$. Then, for some constant $C>0$,
	\begin{align}\label{Ine5}
	\int_0^t(1+t-\tau)^{-s_1}(1+\tau)^{-s_2}\ d\tau
	\le\left\{
	\begin{array}{ll}
	C(1+t)^{-s_1},\ \ \ \ \ \text{if}\ \  s_2> 1,
	\\
	C(1+t)^{-s_1} \ln(1+t), \ \text{if}\ \ s_2=1,
	\\
	C(1+t)^{1-s_1-s_2},\ \text{if}\ \ s_2<1.
	\end{array}
	\right.
	\end{align}
\end{lemma}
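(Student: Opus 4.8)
The plan is to prove the bound by splitting the interval of integration at its midpoint, exploiting the fact that each of the two factors $(1+t-\tau)^{-s_1}$ and $(1+\tau)^{-s_2}$ is essentially constant on one of the two halves. Write $I(t)=\int_0^t(1+t-\tau)^{-s_1}(1+\tau)^{-s_2}\,d\tau = I_{\mathrm{left}}(t)+I_{\mathrm{right}}(t)$, where $I_{\mathrm{left}}$ denotes the integral over $[0,t/2]$ and $I_{\mathrm{right}}$ the integral over $[t/2,t]$. A preliminary reduction disposes of small $t$: for $0\le t\le 1$ one has $1+t-\tau\ge 1$ and $1+\tau\ge 1$, so $I(t)\le t\le 1$, while each right-hand side in (\ref{Ine5}) is bounded below by a positive constant on $[0,1]$; hence it suffices to treat $t\ge 1$, where $\ln(1+t)\ge\ln 2>0$.

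For $I_{\mathrm{left}}$, on $[0,t/2]$ we have $t-\tau\ge t/2$, so $1+t-\tau\ge\tfrac12(1+t)$ and $(1+t-\tau)^{-s_1}\le C(1+t)^{-s_1}$. Pulling out this factor leaves $\int_0^{t/2}(1+\tau)^{-s_2}\,d\tau$, which is bounded by a constant when $s_2>1$, by $C\ln(1+t)$ when $s_2=1$, and by $C(1+t)^{1-s_2}$ when $s_2<1$. Reinstating the factor $(1+t)^{-s_1}$ produces precisely the three target bounds $C(1+t)^{-s_1}$, $C(1+t)^{-s_1}\ln(1+t)$, and $C(1+t)^{1-s_1-s_2}$, respectively, so $I_{\mathrm{left}}$ already achieves the claimed estimate in each regime.

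For $I_{\mathrm{right}}$, on $[t/2,t]$ we have $\tau\ge t/2$, hence $(1+\tau)^{-s_2}\le C(1+t)^{-s_2}$; after the change of variables $\sigma=t-\tau$ the remaining integral becomes $\int_0^{t/2}(1+\sigma)^{-s_1}\,d\sigma$, controlled by a constant, by $C\ln(1+t)$, or by $C(1+t)^{1-s_1}$ according to whether $s_1>1$, $s_1=1$, or $s_1<1$. The only genuine content of the lemma is the bookkeeping step of checking that in every admissible configuration subject to $0<s_1\le s_2$ the resulting bound on $I_{\mathrm{right}}$ is dominated by the claimed bound on $I(t)$: when $s_2>1$ the candidate bounds $C(1+t)^{-s_2}$, $C(1+t)^{-s_2}\ln(1+t)$, and $C(1+t)^{1-s_1-s_2}$ are each $\le C(1+t)^{-s_1}$, using $s_1\le s_2$, $s_2>1$, and the elementary fact that $\ln(1+t)/(1+t)^{s_2-1}$ is bounded; when $s_2=1$ (forcing $s_1\le 1$) the contribution is at most $C(1+t)^{-1}\ln(1+t)$, absorbed into $C(1+t)^{-s_1}\ln(1+t)$; and when $s_2<1$ (forcing $s_1<1$) it equals $C(1+t)^{1-s_1-s_2}$, matching the left half. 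Adding $I_{\mathrm{left}}$ and $I_{\mathrm{right}}$ then yields (\ref{Ine5}) in all three regimes, completing the proof. The expected obstacle is purely this case analysis rather than any analytic difficulty, which is why the lemma is described as elementary.
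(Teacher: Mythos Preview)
Your argument is correct and is the standard midpoint-splitting proof of this classical convolution estimate. The paper in fact does not supply a proof at all, remarking only that the lemma is elementary and its proof straightforward; your write-up fills in exactly the routine case analysis one would expect.
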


\vskip .1in
Now we derive an integral representation of (\ref{MHD1}). Applying the Leray-Hopf projection operator $\mathbb P=I-\na\Delta^{-1}\na\cdot$ to the velocity equation in (\ref{MHD1}) and  taking the Fourier transform of the resulting equations, we have
\begin{equation}\label{pp}
\p_t\bordermatrix{\text{}\cr
	&\widehat{u} \cr
	&\widehat{b} }=A\bordermatrix{\text{}\cr
	&\widehat{u} \cr
	&\widehat{b} }+\bordermatrix{\text{}\cr
	&\widehat{N}_1 \cr
	&\widehat{N}_2 },
\end{equation}
where
$$
A=\bordermatrix{\text{}\cr
	& -\mu\xi_1^2 &  i\xi_2 \cr
	& i\xi_2      &  -\eta|\xi_h|^{2} }, \qquad
N_1= \mathbb P (b\cdot\nabla b-u\cdot\nabla u), \qquad
N_2=b\cdot\nabla u-u\cdot\nabla b
$$
with $|\xi_h|^{2}=\xi_1^{2}+\xi_2^{2}$.
To diagonalize $A$, we compute the eigenvalues of $A$, 
$$
\lambda_1=\f{-(\mu\xi_1^2+\eta |\xi_h|^2)-\sqrt{\Gamma }}{2},\qquad  \lambda_2=\f{-(\mu\xi_1^2+\eta |\xi_h|^2)+\sqrt{\Gamma }}{2},
$$
where
$$
\Gamma=(\mu\xi_1^2+\eta |\xi_h|^2)^2-4(\mu\eta\xi_1^2|\xi_h|^2+\xi_2^2).
$$
The corresponding  eigenvectors are
$$
\rho_1=\bordermatrix{\text{}\cr
	&\lambda_1+\eta|\xi_h|^2\cr
	& i\xi_2}, \qquad
\rho_2=\bordermatrix{\text{}\cr
	&\lambda_2+\eta|\xi_h|^2\cr
	& i\xi_2}.
$$
Therefore, the matrix $A$ can be diagonalized as
\begin{align} \label{pp1}
A=(\rho_1,\rho_2 )
\bordermatrix{\text{}\cr
	& \lambda_1 & 0 \cr
	& 0 & \lambda_2 } \
(\rho_1,\rho_2 )^{-1}.
\end{align}
We can now represent \eqref{pp} as
$$
\bordermatrix{\text{}\cr
	&\widehat{u} \cr
	&\widehat{b} }
=e^{At}\ \bordermatrix{\text{}\cr
	&\widehat{u_0} \cr
	&\widehat{b_0} }
+\int_0^t e^{A(t-\tau)}\ \bordermatrix{\text{}\cr
	&\widehat{N_1}(\tau) \cr
	&\widehat{N_2}(\tau) }d\tau.
$$
By making $e^{At}$ more explicit via (\ref{pp1}), we obtain the integral representation
\ben
&& \hskip -.4in\widehat{u}(\xi, t) = \widehat{Q}_1(t)\widehat{u}_0+\widehat{Q}_2(t)\widehat{b}_0+\int_0^t\big(\widehat{Q}_1(t-\tau)\widehat{N}_1(\tau)
+\widehat{Q}_2(t-\tau)\widehat{N}_2(\tau)\big)\,d\tau, \label{uin}\\
&& \hskip -.4in  \widehat{b}(\xi, t) = \widehat{Q}_2(t)\widehat{u}_0+\widehat{Q}_3(t)\widehat{b}_0+\int_0^t\big(\widehat{Q}_2(t-\tau)\widehat{N}_1(\tau)
+\widehat{Q}_3(t-\tau)\widehat{N}_2(\tau)\big)\,d\tau, \label{bin}
\een
where
\begin{align}
 \widehat{Q}_1(t) = \eta|\xi_h|^2G_1(t)+G_2(t), \quad \widehat{Q}_2(t) = i\xi_2\, G_1(t), \quad
 \widehat{Q}_3(t) = -\eta|\xi_h|^2G_1(t)+G_3(t), \label{Ms}
\end{align}
with
\beno
&& G_1(t)=\frac{e^{\lambda_2t}-e^{\lambda_1t}}{\lambda_2-\lambda_1},\qquad G_2(t)=\frac{\lambda_2e^{\lambda_2t}-\lambda_1e^{\lambda_1t}}{\lambda_2-\lambda_1}=e^{\lambda_2t}+\lambda_1G_1(t), \notag\\
&& G_3(t)=\frac{\lambda_2e^{\lambda_1t}-\lambda_1e^{\lambda_2t}}{\lambda_2-\lambda_1}=e^{\lambda_1t}-\lambda_1G_1(t).
\eeno
We remark that when $\lambda_1 =\lambda_2$, the representation in (\ref{uin}) and (\ref{bin}) remains valid
if we replace $G_1$ by its limiting form
$$
G_1(t) =\lim_{\lambda_2 \to \lambda_1} \frac{e^{\lambda_2t}-e^{\lambda_1t}}{\lambda_2-\lambda_1} = t \,e^{\lambda_1 t}.
$$

\vskip .1in
Next we investigate the behaviors of the kernels $\widehat Q_i(\xi,t)\ (i=1,2,3)$,  which play a crucial role in the estimate of $E_2(t)$. There kernels are anisotropic and degenerate. To obtain precise and sharp upper bounds,  we  divide the frequency space
into subdomains  and classify the behavior of the kernel functions in each subdomain.

\begin{proposition}\label{prop4}
	The domain $\R^3$ is split into two subdomains,  $\R^3 = A_1 \cup A_2$ with
	\begin{align*}
	&A_1:=\left\{ \xi\in\R^3:  \sqrt{\Gamma}\le\f{\mu \xi_1^2+\eta |\xi_h|^2}{2} \ \ \text {or}\ \
	3(\mu \xi_1^2+\eta |\xi_h|^2)^2\le 16(\mu\eta\xi_1^2|\xi_h|^2+\xi_2^2)\right \},\\
	&A_2:=\left\{ \xi\in\R^3:  \sqrt{\Gamma}>\f{\mu \xi_1^2+\eta |\xi_h|^2}{2} \ \ \text {or} \ \
	3(\mu \xi_1^2+\eta |\xi_h|^2)^2 > 16(\mu\eta\xi_1^2|\xi_h|^2+\xi_2^2)\right \}.
	\end{align*}
	Then we have
	\begin{enumerate}
	\item[(1)] There exist two constants $C>0$ and $c_0>0$ such that,	for any $\xi\in A_1$,
	\begin{align*}
	& Re\lambda_1\le -\f{\mu \xi_1^2+\eta |\xi_h|^2}{2},\ Re\lambda_2\le -\f{\mu \xi_1^2+\eta |\xi_h|^2}{4},\\
	& |G_1(t)|\le t e^{-\f{\mu \xi_1^2+\eta |\xi_h|^2}{4}t},\ \quad \  |\widehat{Q}_i(\xi, t)|\le Ce^{-c_0|\xi_h|^2t},\ i=1, 2, 3.
	\end{align*}
	
   \item[(2)] There is a constant $C>0$ such that, for any $\xi\in A_2$,
	\begin{align*}
    & \lambda_1\ < -\f{3(\mu \xi_1^2+\eta |\xi_h|^2)}{4},\ \lambda_2\le -\f{\mu\eta\xi_1^2|\xi_h|^2+\xi_2^2}{\mu \xi_1^2+\eta |\xi_h|^2},\\
    & |G_1(t)| <\f{2}{\mu \xi_1^2+\eta |\xi_h|^2} \left(e^{-\f 34 (\mu \xi_1^2+\eta |\xi_h|^2)t}+e^{-\f{\mu\eta\xi_1^2|\xi_h|^2+\xi_2^2}{\mu \xi_1^2+\eta |\xi_h|^2}t}\right),\\
    & |\widehat{Q}_i(t)| < C(e^{-\f 34 (\mu \xi_1^2+\eta |\xi_h|^2)t}+e^{-\f{\mu\eta\xi_1^2|\xi_h|^2+\xi_2^2}{\mu \xi_1^2+\eta |\xi_h|^2}t}),\ i=1, 2, 3.
	\end{align*}
	If we further divide $A_2$ into three subdomains $A_{21}, A_{22}, A_{23}$,
	\beno
	&& A_{21}=\{\xi\in A_2, \quad |\xi_1|\sim| \xi_2|\},\\
	&& A_{22}=\{\xi\in A_2, \quad |\xi_1|>>|\xi_2|\},\\
	&& A_{23}=\{\xi\in A_2, \quad |\xi_1|<<|\xi_2|\},
	\eeno
	then, for some constants $C>0, c_1>0, c_2>0, c_3>0$ and $i=1,2,3$,
	\begin{align*}
    & |\widehat{Q}_i(t)| \le C\, e^{-c_1  |\xi_h|^2 t}, \quad \mbox{if}\quad  \xi \in A_{21}, \\
    & |\widehat{Q}_i(t)| \le C\, e^{-c_1 |\xi_h|^2 t}, \quad \mbox{if}\quad  \xi \in A_{22}, \\
    & |\widehat{Q}_i(t)| \le C\,( e^{-c_1 |\xi_h|^2 t}+ e^{-c_2 \xi_1^2 t-c_3t}), \quad \mbox{if}\quad  \xi \in A_{23}.
   \end{align*}
	\end{enumerate}
\end{proposition}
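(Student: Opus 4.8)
The plan is to establish Proposition \ref{prop4} by a careful case analysis on the frequency variable $\xi$, exploiting the explicit formulas for $\lambda_1, \lambda_2$ and the fact that the two roots satisfy $\lambda_1 + \lambda_2 = -(\mu\xi_1^2 + \eta|\xi_h|^2)$ and $\lambda_1 \lambda_2 = \mu\eta\xi_1^2|\xi_h|^2 + \xi_2^2$ by Vieta's formulas. The guiding principle is that on $A_1$ (where $\Gamma$ is either negative or small relative to $(\mu\xi_1^2+\eta|\xi_h|^2)^2$) both eigenvalues have comparable real parts $\sim -(\mu\xi_1^2+\eta|\xi_h|^2)$, while on $A_2$ (where $\Gamma$ is positive and sizeable) the two roots split: $\lambda_1$ stays large and negative, whereas $\lambda_2$ is a small eigenvalue controlled by the ratio $\lambda_1\lambda_2/\lambda_1 \approx -(\mu\eta\xi_1^2|\xi_h|^2+\xi_2^2)/(\mu\xi_1^2+\eta|\xi_h|^2)$.

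\emph{Case $A_1$.} First I would treat the subcase $\Gamma < 0$: then $\lambda_{1,2}$ are complex conjugates with real part exactly $-(\mu\xi_1^2+\eta|\xi_h|^2)/2$, so the bounds on $Re\,\lambda_1, Re\,\lambda_2$ are immediate, and $|G_1(t)| = |e^{\lambda_2 t} - e^{\lambda_1 t}|/|\lambda_2-\lambda_1| \le t\,e^{-(\mu\xi_1^2+\eta|\xi_h|^2)t/2}$ follows from the elementary inequality $|e^{a} - e^{b}| \le |a-b|\,\max(e^{Re\,a}, e^{Re\,b})$ applied along the segment joining $\lambda_1 t$ and $\lambda_2 t$. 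For the subcase $\Gamma \ge 0$ but $3(\mu\xi_1^2+\eta|\xi_h|^2)^2 \le 16(\mu\eta\xi_1^2|\xi_h|^2+\xi_2^2)$, one has $\sqrt{\Gamma} \le \frac12(\mu\xi_1^2+\eta|\xi_h|^2)$ by a direct computation (this is the first defining condition of $A_1$ and the two conditions are in fact equivalent), so $\lambda_2 = \frac{-(\mu\xi_1^2+\eta|\xi_h|^2) + \sqrt\Gamma}{2} \le -\frac{\mu\xi_1^2+\eta|\xi_h|^2}{4}$, and the same for $\lambda_1$. The $G_1$ bound follows again by the segment estimate. Finally, for the $\widehat Q_i$ bounds one inserts these eigenvalue estimates into the formulas \eqref{Ms}; the only point requiring care is the factor $\eta|\xi_h|^2 G_1(t)$, where the algebraic prefactor $\eta|\xi_h|^2 \cdot t$ is absorbed into the exponential $e^{-c(\mu\xi_1^2+\eta|\xi_h|^2)t}$ at the cost of a constant (using $xe^{-x} \le C$), leaving the pure $|\xi_h|^2$ decay; similarly $i\xi_2 G_1(t)$ is handled by noting $|\xi_2| \le |\xi_h|$ and absorbing $|\xi_h|^2 t$ again, while $|\xi_2|/(\mu\xi_1^2+\eta|\xi_h|^2)$ stays bounded.

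\emph{Case $A_2$.} Here $\sqrt\Gamma > \frac12(\mu\xi_1^2+\eta|\xi_h|^2)$, so $\lambda_1 = \frac{-(\mu\xi_1^2+\eta|\xi_h|^2)-\sqrt\Gamma}{2} < -\frac34(\mu\xi_1^2+\eta|\xi_h|^2)$ is immediate. For $\lambda_2$, I would use the factorization $\lambda_2 = \frac{\lambda_1\lambda_2}{\lambda_1} = \frac{\mu\eta\xi_1^2|\xi_h|^2+\xi_2^2}{\lambda_1}$ together with the just-proven bound $\lambda_1 \le -(\mu\xi_1^2+\eta|\xi_h|^2)$ (note on $A_2$, $\sqrt\Gamma \ge \tfrac12(\mu\xi_1^2+\eta|\xi_h|^2)$ gives $\lambda_1 \le -\tfrac34(\mu\xi_1^2+\eta|\xi_h|^2)$, hence certainly $|\lambda_1| \ge \mu\xi_1^2+\eta|\xi_h|^2$) to conclude $\lambda_2 \le -\frac{\mu\eta\xi_1^2|\xi_h|^2+\xi_2^2}{\mu\xi_1^2+\eta|\xi_h|^2}$. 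Since now $\lambda_2 - \lambda_1 = \sqrt\Gamma \ge \tfrac12(\mu\xi_1^2+\eta|\xi_h|^2)$ is bounded below, the crude bound $|G_1(t)| \le \frac{|e^{\lambda_2 t}|+|e^{\lambda_1 t}|}{\lambda_2-\lambda_1} \le \frac{2}{\mu\xi_1^2+\eta|\xi_h|^2}(e^{\lambda_1 t}+e^{\lambda_2 t})$ gives the stated estimate (here $e^{\lambda_i t} \le e^{Re\,\lambda_i t}$ with the bounds above). The $\widehat Q_i$ bounds on $A_2$ then follow as in Case $A_1$: the prefactor $\eta|\xi_h|^2$ in $\widehat Q_1, \widehat Q_3$ is killed by the $\frac{1}{\mu\xi_1^2+\eta|\xi_h|^2}$ in $G_1$, and $i\xi_2 G_1$ in $\widehat Q_2$ satisfies $|\xi_2|/(\mu\xi_1^2+\eta|\xi_h|^2) \le C/|\xi_h|$ so is bounded, giving $|\widehat Q_i(t)| \le C(e^{-\frac34(\mu\xi_1^2+\eta|\xi_h|^2)t}+e^{-\frac{\mu\eta\xi_1^2|\xi_h|^2+\xi_2^2}{\mu\xi_1^2+\eta|\xi_h|^2}t})$.

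\emph{Refinement on $A_{21}, A_{22}, A_{23}$.} The remaining work is to simplify the two-exponential bound on each of the three subregions by comparing the two exponents. On $A_{21}$, where $|\xi_1|\sim|\xi_2|$, one has $\mu\eta\xi_1^2|\xi_h|^2 + \xi_2^2 \gtrsim \xi_2^2 \sim \xi_1^2 + \xi_2^2 = |\xi_h|^2$ when $|\xi_h|$ is bounded, while $\mu\xi_1^2+\eta|\xi_h|^2 \sim |\xi_h|^2$; dividing, the small exponent is $\gtrsim |\xi_h|^2/(\text{const} + |\xi_h|^2)$ — here one must split further into $|\xi_h| \le 1$ and $|\xi_h| > 1$, but in both the conclusion $|\widehat Q_i(t)| \le Ce^{-c_1|\xi_h|^2 t}$ holds (for $|\xi_h|>1$ the large exponent already dominates). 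On $A_{22}$, where $|\xi_1| \gg |\xi_2|$, similarly $\mu\eta\xi_1^2|\xi_h|^2 + \xi_2^2 \gtrsim \mu\eta\xi_1^4 \sim |\xi_h|^4$ and $\mu\xi_1^2+\eta|\xi_h|^2 \sim |\xi_h|^2$, so the ratio is $\gtrsim |\xi_h|^2$, giving the clean bound. On $A_{23}$, where $|\xi_1| \ll |\xi_2|$, the small exponent is $\frac{\mu\eta\xi_1^2|\xi_h|^2+\xi_2^2}{\mu\xi_1^2+\eta|\xi_h|^2} \sim \frac{\xi_2^2}{\xi_2^2} \sim 1$ when $\xi_1$ is small, which is too crude for large $|\xi_h|$; instead one writes $\mu\eta\xi_1^2|\xi_h|^2 + \xi_2^2 \gtrsim \xi_2^2 + \xi_1^2\xi_2^2 \gtrsim \xi_1^2 + \xi_2^2$ only in the bounded regime, and carefully tracks that $\frac{\xi_2^2}{\mu\xi_1^2 + \eta|\xi_h|^2} = \frac{\xi_2^2}{\mu\xi_1^2+\eta\xi_1^2+\eta\xi_2^2}$ stays $\gtrsim c_3 > 0$ for all $\xi_2 \ne 0$ while also contributing a term $\gtrsim c_2\xi_1^2$ — this is the source of the $e^{-c_2\xi_1^2 t - c_3 t}$ summand.

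\textbf{Main obstacle.} The genuinely delicate part is the refinement on $A_{23}$: one must simultaneously extract a $\xi_1^2$-decay and a fixed exponential rate from the single ratio $(\mu\eta\xi_1^2|\xi_h|^2+\xi_2^2)/(\mu\xi_1^2+\eta|\xi_h|^2)$, and the naïve estimate only gives a constant. The key algebraic observation is that on $A_{23}$ one has $\xi_1^2 \ll \xi_2^2 \le |\xi_h|^2 \le 2\xi_2^2$, so $\mu\xi_1^2+\eta|\xi_h|^2 \sim \eta\xi_2^2$ and then the ratio is $\sim \mu\xi_1^2 + \xi_2^2/(\eta\xi_2^2) \cdot(1 + o(1))$; the first piece gives the $c_2\xi_1^2$ and the second the positive constant $c_3 \sim 1/\eta$. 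One should also double-check the borderline behavior of the three subregions at their interfaces and confirm the definitions are exhaustive (the $A_{2i}$ cover $A_2$ up to a measure-zero set, which suffices for later $L^2$ integration). Everything else reduces to the two elementary lemmas about $|e^a - e^b|$ and $xe^{-x} \le C$, applied mechanically.
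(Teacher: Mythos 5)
Your overall strategy coincides with the paper's (explicit eigenvalue bounds, the elementary inequalities $|e^a-e^b|\le|a-b|\max(e^{\mathrm{Re}\,a},e^{\mathrm{Re}\,b})$ and $xe^{-x}\le C$, and a region-by-region comparison of the two exponents on $A_{21},A_{22},A_{23}$ — your analysis of $A_{23}$, which you flag as the main obstacle, is correct and matches the paper). The genuine gap is in Part (1), in your treatment of $\widehat{Q}_2=i\xi_2G_1$ and of the term $\lambda_1G_1$ hidden inside $\widehat{Q}_1=\eta|\xi_h|^2G_1+\lambda_1G_1+e^{\lambda_2t}$. You claim these are handled by "absorbing $|\xi_h|^2t$" and by "$|\xi_2|/(\mu\xi_1^2+\eta|\xi_h|^2)$ stays bounded." Neither works on $A_1$. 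Write $S=\mu\xi_1^2+\eta|\xi_h|^2$ and $P=\mu\eta\xi_1^2|\xi_h|^2+\xi_2^2$. Take $\xi_1=0$ and $|\xi_2|$ small: then $\xi\in A_1$, $S=\eta\xi_2^2$, and $|\xi_2|/S=1/(\eta|\xi_2|)\to\infty$, so the ratio is not bounded. Moreover the prefactor in $|\xi_2|\,|G_1|\le|\xi_2|\,t\,e^{-St/4}$ is $|\xi_2|t$, not $St$; evaluating at $t=4/(\eta\xi_2^2)$ gives $\approx 4/(e\eta|\xi_2|)\to\infty$, while the target bound $Ce^{-c_0|\xi_h|^2t}$ is $O(1)$ there. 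The same problem afflicts $\lambda_1G_1$ when $\Gamma<0$, since then $|\lambda_1|=\sqrt{P}$ can be much larger than $S$ at low frequencies. The missing idea — which is the actual content of the paper's proof of Part (1) — is a dichotomy comparing the offending prefactor ($|\xi_2|$, resp.\ $|\lambda_1|$) with $|\sqrt{\Gamma}|$: if the prefactor is $\le|\sqrt{\Gamma}|$ one uses $|G_1|=|e^{\lambda_2t}-e^{\lambda_1t}|/|\sqrt{\Gamma}|$ so the denominator cancels the prefactor with no polynomial loss (in the oscillatory regime this is exactly $|\sin(\omega t)|/\omega\le1/\omega$); if the prefactor is $\ge|\sqrt{\Gamma}|$, the identity $\Gamma=S^2-4P$ forces $\sqrt{3}|\xi_2|\le S$ (resp.\ $\sqrt{3}|\lambda_1|\le S$), and only then is the prefactor comparable to $S$ so that $Ste^{-St/4}\le C$ applies. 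Without this step the estimate $|\widehat{Q}_i|\le Ce^{-c_0|\xi_h|^2t}$ on $A_1$ is not established.

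Two smaller errors in Part (2), both fixable in a line. First, your derivation of $\lambda_2\le-P/S$ from $\lambda_2=P/\lambda_1$ invokes $|\lambda_1|\ge S$, which gives the reverse inequality $\lambda_2=-P/|\lambda_1|\ge-P/S$; what you need is $|\lambda_1|=\tfrac12(S+\sqrt{\Gamma})\le S$, which holds on $A_2$ because $\sqrt{\Gamma}=\sqrt{S^2-4P}\le S$ (the paper gets the same bound by rationalizing $\lambda_2=-2P/(S+\sqrt{\Gamma})$). Second, for $\widehat{Q}_2$ on $A_2$ the bound $|\xi_2|/S\le C/|\xi_h|$ does not imply boundedness, since $|\xi_h|$ is not bounded below on $A_2$; the correct reason is that the defining inequality $3S^2>16P\ge16\xi_2^2$ gives $|\xi_2|\le\tfrac{\sqrt{3}}{4}S$ directly.
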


\begin{proof}[Proof of Proposition \ref{prop4}]
	$(1)$  For $\xi\in A_1$,  $\sqrt{\Gamma}\le \f{\mu\xi_1^2+\eta|\xi_h|^2}{2}$.  Through the direct estimates and the mean-value theorem, we have
	\begin{align}
	&-\f{3(\mu\xi_1^2+\eta|\xi_h|^2)}{4}\le Re\lambda_1\le-\f{\mu\xi_1^2+\eta|\xi_h|^2}{2}, \notag\\
    &Re\lambda_2\le -\f{\mu\xi_1^2+\eta|\xi_h|^2}{4},\,\quad
     |G_1(t)|\le t e^{-\f{\mu\xi_1^2+\eta|\xi_h|^2}{4}t}.\label{yy}
	\end{align}
    To bound the kernel functions $\widehat{Q}_1(t)$ and $\widehat{Q}_3(t)$, we consider two cases: $\lambda_1$ is a real number and  $\lambda_1$ is an imaginary number. If $\lambda_1$ is a real number,  for some pure constant $c_0$ dependent of $\mu$ and $\eta$, we have
	\begin{align*}
    |\widehat{Q}_1(t)|&=\Big|\eta |\xi_h|^2 G_1(t)+\lambda_1G_1(t)+e^{\lambda_2t}\Big|\\
    &\le C (\mu\xi_1^2+\eta|\xi_h|^2)te^{-\f{\mu\xi_1^2+\eta|\xi_h|^2}{4}t}+e^{-\f{\mu\xi_1^2+\eta|\xi_h|^2}{4}t}\\
    &\le  Ce^{-c_0|\xi_h|^2t},
	\end{align*}
	where we have used the simple fact that
    $x\ e^{-x}\le C$ for $x\ge 0$. If $\lambda_1$ is an imaginary number, namely $\Gamma<0$, then
   $$|\lambda_1|^2=\mu\eta\xi_1^2|\xi_h|^2+\xi_2^2,\quad\Gamma=4|\lambda_1|^2-(\mu\xi_1^2+\eta|\xi_h|^2)^2. $$
   Clearly, (\ref{yy}) implies
   $$
   \Big|\eta |\xi_h|^2 G_1(t)+e^{\lambda_2t}\Big|\le Ce^{-c_0|\xi_h|^2t}.
   $$
   Now we bound $|\lambda_1G_1(t)|$.
   we further divide the consideration into two subcases: $|\lambda_1|\le | \sqrt{\Gamma}\ |$ and $ |\lambda_1| \geq |\sqrt{\Gamma}\ |$  .
   In the case when $|\lambda_1|\le |\sqrt{\Gamma}|$, by the definition of $G_1$, we obtain
    \begin{align*}
	|\lambda_1G_1(t)|=\f{|\lambda_1|}{|\sqrt{\Gamma}|}\,|e^{\lambda_1t}-e^{\lambda_2t}|\le Ce^{-\f{\mu\xi_1^2+\eta|\xi_h|^2}{4}t}.
    \end{align*}
   In the case when $|\lambda_1| \geq |\sqrt{\Gamma}\ |=\sqrt{-\Gamma}$, we have
    \begin{align*}
    |\lambda_1|^2\ge 4 |\lambda_1|^2-(\mu\xi_1^2+\eta|\xi_h|^2)^2,
    \end{align*}
   or
    \begin{align*}
     \sqrt{3} |\lambda_1|\le \mu\xi_1^2+\eta|\xi_h|^2.
    \end{align*}
   Thus,
    \begin{align*}
    |\lambda_1G_1(t)| \le \f 1 {\sqrt{3}}(\mu\xi_1^2+\eta|\xi_h|^2)|G_1| \le C(\mu\xi_1^2+\eta|\xi_h|^2) te^{-\f{(\mu\xi_1^2+\eta|\xi_h|^2)}{4}t}\le  Ce^{-c_0|\xi_h|^2t}.
    \end{align*}
  Consequently, if $\lambda_1$ is an imaginary number, we derive
    \begin{align*}
	 |\widehat{Q}_1(t)|\le Ce^{-c_0|\xi_h|^2t}.
	\end{align*}
In summary, for $\xi\in A_1$,
     \begin{align*}
	 |\widehat{Q}_1(t)|\le Ce^{-c_0|\xi_h|^2t}.
	\end{align*}
Similarly, we have
	\begin{align*}
	|\widehat{Q}_3(t)|=\Big|-\eta |\xi_h|^2 G_1(t)-\lambda_1G_1(t)+e^{\lambda_1t}\Big|
	\le Ce^{-c_0|\xi_h|^2t}.
	\end{align*}

   Now we bound $\widehat{Q}_2(t)$.  As in the estimate of $\widehat{Q}_1(t)$,
    we consider the following two cases:
    $|\xi_2|\le { |\sqrt{\Gamma}|}$ and $|\xi_2|\geq { |\sqrt{\Gamma}|}$.
   In the first case $|\xi_2|\le |\sqrt{\Gamma}\ |$, by the definition of $\widehat{Q}_2(t)$ in (\ref{Ms}),
	$$
	|\widehat{Q}_2(t)|=\Big|\f{\xi_2}{\sqrt{\Gamma}}\Big|\,|e^{\lambda_1t}-e^{\lambda_2t}| \le C\,e^{-c_0|\xi_h|^2t}.
	$$
	 In the second case, \,$|\xi_2|\geq |\sqrt{\Gamma}\ |$,
	$$
	\big|(\mu\xi_1^2+\eta|\xi_h|^2)^2-4(\mu \eta \xi_1^2|\xi_h|^2+\xi_2^2)\big|\le \xi_2^2,
	$$
	or
	$$
	-\xi_2^2 \le (\mu\xi_1^2+\eta|\xi_h|^2)^2-4(\mu \eta \xi_1^2|\xi_h|^2+\xi_2^2)\le \xi_2^2,
	$$
	which implies
	$$
	3\xi_2^2+4\mu \eta \xi_1^2|\xi_h|^2\le (\mu\xi_1^2+\eta|\xi_h|^2)^2.
	$$
	In particular,
    $$
	 \sqrt{3} |\xi_2|\le \mu\xi_1^2+\eta|\xi_h|^2.
	$$
    Therefore,
	$$
	|\widehat{Q}_2(t)|\le \ \f {1}{\sqrt{3}}(\mu\xi_1^2+\eta|\xi_h|^2) \ |G_1(t)|
    \le C\,e^{-c_0|\xi_h|^2t}.
	$$
	
	$(2)$  For $\xi \in A_2$, we have $\f {\mu\xi_1^2+\eta|\xi_h|^2} {2}<\sqrt{\Gamma}\le \mu\xi_1^2+\eta|\xi_h|^2$. It then follows that
	\begin{align*}
	&-(\mu\xi_1^2+\eta|\xi_h|^2)\le\lambda_1< -\f 34 (\mu\xi_1^2+\eta|\xi_h|^2),\\
	&\lambda_2=\f{\Gamma-(\mu\xi_1^2+\eta|\xi_h|^2)^2}{2(\mu\xi_1^2+\eta|\xi_h|^2+\sqrt{\Gamma})}
    \le-\f{\mu \eta \xi_1^2|\xi_h|^2+\xi_2^2}{\mu\xi_1^2+\eta|\xi_h|^2}.
	\end{align*}
	Therefore,
    $$
     |G_1(t)|\le\f {1}{\lambda_2-\lambda_1}(e^{\lambda_1t}+e^{\lambda_2t})<\f {2} {\mu\xi_1^2+\eta|\xi_h|^2}\Big(e^{-\f34 (\mu\xi_1^2+\eta|\xi_h|^2)t}+e^{-\f{\mu \eta \xi_1^2|\xi_h|^2+\xi_2^2}{\mu\xi_1^2+\eta|\xi_h|^2}t}\Big).
    $$
    As a consequence,
	\begin{align*}
	|\widehat{Q}_1(t)|&=\Big|\eta |\xi_h|^2 G_1(t)+\lambda_1G_1(t)+e^{\lambda_2t}\Big|\\
	&\le 2(\mu\xi_1^2+\eta|\xi_h|^2)|G_1(t)|+e^{\lambda_2t}\\
    &< C\Big(e^{-\f34 (\mu\xi_1^2+\eta|\xi_h|^2)t}
    +e^{-\f{\mu \eta \xi_1^2|\xi_h|^2+\xi_2^2}{\mu\xi_1^2+\eta|\xi_h|^2}t}\Big).
	\end{align*}
    Similarly,
    \begin{align*}
	|\widehat{Q}_3(t)|=\Big|-\eta |\xi_h|^2 G_1(t)-\lambda_1G_1(t)+e^{\lambda_1t}\Big|
	< C\Big(e^{-\f34 (\mu\xi_1^2+\eta|\xi_h|^2)t}+e^{-\f{\mu \eta \xi_1^2|\xi_h|^2+\xi_2^2}{\mu\xi_1^2+\eta|\xi_h|^2}t}\Big).
	\end{align*}
	Due to $\sqrt{\Gamma}> \f{\mu\xi_1^2+\eta|\xi_h|^2}{2}$, we find
	\begin{align*}
	\f{3}{4}(\mu\xi_1^2+\eta|\xi_h|^2)^2>4(\mu \eta \xi_1^2|\xi_h|^2+\xi_2^2)\geq \xi_2^2.
	\end{align*}
	Therefore,
	\begin{align*}
    |\widehat{Q}_2(t)|< C (\mu\xi_1^2+\eta|\xi_h|^2)\ |G_1(t)|< C(e^{-\f34 (\mu\xi_1^2+\eta|\xi_h|^2)t}
    +e^{-\f{\mu \eta \xi_1^2|\xi_h|^2+\xi_2^2}{\mu\xi_1^2+\eta|\xi_h|^2}t}\Big).
	\end{align*}
	
   Finally, according to the upper bound for  $|\widehat{Q}_i(t)| \ (i=1,2,3)$, by further division of $A_2$ into $A_{21}$, $A_{22}$ and $A_{23}$, we can establish more definite upper bound.  For
   $\xi \in A_{21}$, $\xi_1^2 \sim \xi_2^2$ , we have
   $$
   \f{\mu\eta\xi_1^2|\xi_h|^2+\xi_2^2}{\mu \xi_1^2+\eta |\xi_h|^2} \quad\sim \quad |\xi_h|^2+1.
   $$
   For
   $\xi \in A_{22}$, $\xi_1^2 >> \xi_2^2$, there exists a $c_1>0$ small sufficiently such that
   $$
  \f{\mu\eta\xi_1^2|\xi_h|^2+\xi_2^2}{\mu \xi_1^2+\eta |\xi_h|^2}\ge c_1 |\xi_h|^2.
   $$
 The behavior $\xi\in A_{23}$ can be similarly identified. This completes the proof of Proposition \ref{prop4}.
\end{proof}

\vskip .1in
\subsection{Proof of Proposition \ref{prop3} }
With these preparations at our disposal, we are now ready to prove Proposition \ref{prop3}. Since the process is complicated and long, the proof is divided into three lemmas. To do so, we make the following decomposition for $E_2(t)$,
$$
E_2(t)=E_{21}(t) +E_{22}(t) +E_{23}(t),
$$
where
\begin{align*}
&E_{21}(t)=\sup_{0\le\tau\le t}
(1+\tau)\|(u(\tau),b(\tau))\|_{L^2}^2,\\
&E_{22}(t)=\sup_{0\le\tau\le t}(1+\tau)^2\|(\na_h u(\tau),\na_h b(\tau))\|_{L^2}^2+\sup_{0\le\tau\le t}(1+\tau)^{1-2\varepsilon }\|(\p_3u(\tau),\p_3b(\tau))\|_{L^2}^2,\nonumber\\
&E_{23}(t)=\sup_{0\le\tau\le t}\sum_{k=1}^{2}(1+\tau)^{\f 52-2\varepsilon}\|(\p_1\p_k u(\tau), \p_1\p_k b(\tau))\|_{L^2}^2\nonumber\\
&\qquad\qquad 
+\sup_{0\le\tau\le t}(1+\tau)^{2-2\varepsilon }\|(\p_1\p_3 u(\tau), \p_1\p_3 b(\tau))\|_{L^2}^2\\
&\qquad\qquad +\sup_{0\le\tau\le t}\sum_{k=2}^{3}(1+\tau)^{\f 43- 2\varepsilon}\|(\p_2\p_k u(\tau), \p_2\p_k b(\tau))\|_{L^2}^2\\
&\qquad\qquad +\sup_{0\le\tau\le t}(1+\tau)^{\f 12 }\|(\p_3^2 u(\tau), \p_3^2 b(\tau))\|_{L^2}^2.
\end{align*}

Without loss of generality, we assume
$t>1$. In fact, if $0\le t\le 1$,  Proposition \ref{pro1} implies
\begin{align}\label{E20}
E_2(t)\le C \sup_{0\le\tau\le t}\|(u(\tau),b(\tau))\|_{H^2}^2\le C E_0(t)\le CE(0)+ CE^{\f 32}(t).
\end{align}
Next we present the estimates for $E_{21}(t)$, $E_{22}(t)$ and $E_{23}(t)$, which will be shown in three lemmas. Proposition \ref{prop3} then follows as an immediate consequence. 

\begin{lemma} \label{lem55}
Assume that $(u,b)$ is a solution  to (\ref{MHD1}). Then we have
\begin{align}\label{E21}
E_{21}(t)\le C E^2(t)+C(\|(u_0,b_0)\|_{L_{x_3}^2L_{x_1x_2}^1}^2+\|(u_0,b_0)\|_{L^2}^2).
\end{align}
\end{lemma}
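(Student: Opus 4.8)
The plan is to abandon energy methods --- which cannot see time decay --- and to run everything through the integral representation \eqref{uin}--\eqref{bin}. We may assume $t>1$, the range $0\le t\le 1$ being already covered by \eqref{E20}; and within $0\le\tau\le t$ the sub-range $0\le\tau\le 1$ is trivial, since the energy identity \eqref{ub0} gives $\|(u,b)(\tau)\|_{L^2}\le\|(u_0,b_0)\|_{L^2}$, hence $(1+\tau)\|(u,b)(\tau)\|_{L^2}^2\le 2\|(u_0,b_0)\|_{L^2}^2$. So the work lies in $1\le\tau\le t$. The key preliminary is a heat-type bound for the kernels: Proposition \ref{prop4} gives $|\widehat Q_i(\xi,\sigma)|\le C\,e^{-c|\xi_h|^2\sigma}+C\,\mathbf 1_{A_{23}}(\xi)\,e^{-c_2\xi_1^2\sigma-c_3\sigma}$ for $i=1,2,3$ and all $\sigma>0$, so that, applying Lemma \ref{lem52} in the horizontal variables fiberwise in $x_3$ (with $d=2$, $p=1$, $q=2$) together with Plancherel in $x_3$, one gets for every $g$
\begin{align*}
\big\|\,\widehat Q_i(\sigma)\,\widehat g\,\big\|_{L^2(\R^3)}\le C\,\sigma^{-\f12}\,\|g\|_{L^2_{x_3}L^1_{x_1x_2}}+C\,e^{-c_3\sigma}\|g\|_{L^2}.
\end{align*}
Applied to the linear part of \eqref{uin}--\eqref{bin} with $\sigma=\tau\ge1$, and using $e^{-c_3\tau}\le C(1+\tau)^{-1/2}$, this bounds the linear contribution to $\|(u,b)(\tau)\|_{L^2}$ by $C(1+\tau)^{-1/2}\big(\|(u_0,b_0)\|_{L^2_{x_3}L^1_{x_1x_2}}+\|(u_0,b_0)\|_{L^2}\big)$.

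For the Duhamel part I first pass to the modulus, which absorbs the Leray projection ($|\widehat N_1|\le|\widehat{u\cdot\na u}|+|\widehat{b\cdot\na b}|$, $N_2=b\cdot\na u-u\cdot\na b$), so that it remains to control, for $g(s)$ a component of $u\cdot\na u,\ b\cdot\na b,\ b\cdot\na u,\ u\cdot\na b$,
\begin{align*}
\int_0^\tau\Big(C(\tau-s)^{-\f12}\,\|g(s)\|_{L^2_{x_3}L^1_{x_1x_2}}+C\,e^{-c_3(\tau-s)}\,\|g(s)\|_{L^2}\Big)\,ds.
\end{align*}
Each $g$ splits into a horizontal piece, e.g. $u_h\cdot\na_h u$, and a vertical piece, e.g. $u_3\p_3u$. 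For the horizontal piece, \eqref{fg} gives $\|u_h\cdot\na_h u\|_{L^2_{x_3}L^1_{x_1x_2}}\le C\|u\|_{L^2}^{1/2}\|\p_3u\|_{L^2}^{1/2}\|\na_h u\|_{L^2}$; for the vertical piece I place $u_3$ in the first slot of \eqref{fg} and invoke $\p_3u_3=-\na_h\cdot u_h$ (and $\p_3b_3=-\na_h\cdot b_h$ for the $b$-nonlinearities) to obtain $\|u_3\p_3u\|_{L^2_{x_3}L^1_{x_1x_2}}\le C\|u_3\|_{L^2}^{1/2}\|\na_h u\|_{L^2}^{1/2}\|\p_3u\|_{L^2}$. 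Feeding in the decay rates carried by $E_2(s)$ --- $\|(u,b)(s)\|_{L^2}\le C(1+s)^{-1/2}E^{1/2}$, $\|\na_h(u,b)(s)\|_{L^2}\le C(1+s)^{-1}E^{1/2}$, $\|\p_3(u,b)(s)\|_{L^2}\le C(1+s)^{-1/2+\varepsilon}E^{1/2}$ --- makes every such term $\le C(1+s)^{-5/4+\varepsilon}E$. Since $\varepsilon\le\f1{36}$ gives $\f54-\varepsilon>1$, Lemma \ref{lem53} (after the harmless splitting at $\tau-s=1$ to accommodate the integrable factor $(\tau-s)^{-1/2}$) yields $\int_0^\tau(\tau-s)^{-1/2}(1+s)^{-5/4+\varepsilon}\,ds\le C(1+\tau)^{-1/2}$; the exponential integral is easier, since $\|g(s)\|_{L^2}\le C\|(u,b)(s)\|_{L^\infty}\|\na(u,b)(s)\|_{L^2}\le C(1+s)^{-5/8+\varepsilon}E$ by Gagliardo--Nirenberg and the factor $e^{-c_3(\tau-s)}$ forces the rate $(1+\tau)^{-1/2}$. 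Combining everything, $\|(u,b)(\tau)\|_{L^2}\le C(1+\tau)^{-1/2}\big(E+\|(u_0,b_0)\|_{L^2_{x_3}L^1_{x_1x_2}}+\|(u_0,b_0)\|_{L^2}\big)$, and \eqref{E21} follows upon squaring and multiplying by $(1+\tau)$.

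I expect the nonlinear time integral to be the only real obstacle. A careless use of \eqref{fg} on the vertical piece of $u\cdot\na u$ --- putting $\p_3u$ rather than $u_3$ in its first slot --- yields only $(1+s)^{-7/8+\varepsilon}$, an exponent below $1$, against which the convolution with $(\tau-s)^{-1/2}$ gives no decay gain. The cure, and the conceptual point of the lemma, is to use the divergence-free relations $\p_3u_3=-\na_h\cdot u_h$ and $\p_3b_3=-\na_h\cdot b_h$ so that \emph{every} nonlinear contribution carries a horizontal derivative, which decays at the faster rate $(1+s)^{-1}$; this pushes the decay exponent of $\|g(s)\|_{L^2_{x_3}L^1_{x_1x_2}}$ strictly past the critical value $1$ and lets Lemma \ref{lem53} close the estimate. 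A secondary, purely technical point is that $\mathbb P$ is not bounded on $L^2_{x_3}L^1_{x_1x_2}$; this is bypassed by estimating the multipliers $\widehat Q_i\widehat{\mathbb P}$ jointly, i.e. by passing to $|\widehat N_j|$ at the outset.
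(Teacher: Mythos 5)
Your proposal is correct and follows essentially the same route as the paper: the integral representation with the kernel bounds of Proposition \ref{prop4}, the fiberwise application of Lemma \ref{lem52} giving the $(1+t)^{-1/2}$ linear decay in terms of $\|\cdot\|_{L^2_{x_3}L^1_{x_1x_2}}+\|\cdot\|_{L^2}$, the anisotropic bound \eqref{fg} combined with $\p_3u_3=-\na_h\cdot u_h$ so that the nonlinear term decays at the integrable rate $(1+s)^{-5/4+\varepsilon}$, and Lemma \ref{lem53} to close. The only deviations (treating $0\le\tau\le1$ by energy conservation, keeping the exponential factor instead of converting it to $(1+t-\tau)^{-m}$, and using $\|u\|_{L^\infty}\|\na u\|_{L^2}$ rather than $\|u\|_{L^4}\|\na u\|_{L^4}$ for the plain $L^2$ piece) are cosmetic and do not change the argument.
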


\begin{proof}[Proof of Lemma \ref{lem55}]
Recalling (\ref{uin}) and (\ref{bin}), and applying Plancherel' theorem, we have
\begin{align}
	\|u(t)\|_{L^2(\R^3)}&=\|\widehat{u}( t)\|_{L^2(\R^3)}
	\le\|\widehat{Q}_1(t)\widehat{u}_0\|_{L^2(\R^3)}+\|\widehat{Q}_2(t)\widehat{b}_0\|_{L^2(\R^3)}\nonumber\\
	&+\int_0^t\|\widehat{Q}_1(t-\tau)\widehat{N}_1(\tau)\|_{L^2(\R^3)}\,d\tau
	+\int_0^t\|\widehat{Q}_2(t-\tau)\widehat{N}_2(\tau)\|_{L^2(\R^3)}\,d\tau.\label{uL2},\\
    \|b(t)\|_{L^2(\R^3)}&=\|\widehat{b}( t)\|_{L^2(\R^3)}
	\le\|\widehat{Q}_2(t)\widehat{u}_0\|_{L^2(\R^3)}+\|\widehat{Q}_3(t)\widehat{b}_0\|_{L^2(\R^3)}\nonumber\\
	&+\int_0^t\|\widehat{Q}_2(t-\tau)\widehat{N}_1(\tau)\|_{L^2(\R^3)}\,d\tau
	+\int_0^t\|\widehat{Q}_3(t-\tau)\widehat{N}_2(\tau)\|_{L^2(\R^3)}\,d\tau.\label{bL2}
\end{align}
We shall only provide the estimates for $\|u\|_{L^2(\R^3)}$. $\|b\|_{L^2(\R^3)}$ can be 
estimated in a similar way and admits the same bound as $u$ due to the similarity of (\ref{bL2})
 with (\ref{uL2}) . We  focus on the first term and the third term on the right side 
 in (\ref{uL2}). The estimates for the rest can be established similarly.
By Proposition \ref{prop4} and Lemma {\ref{lem52}},
\begin{align}\label{Bu0}
	\|\widehat{Q}_1(t)\widehat{u}_0\|_{L^2{(\R^3)}}
	&\le C \|e^{-{\widetilde{c}_0}|\xi_h|^2t}\,\widehat{u}_0 \ \|_{L^2{(\R^3)}}
	+C\|e^{ -{ c_3}t } \ \widehat{ u}_0\|_{L^2(\R^3)}\nonumber\\
	&= C \big\| \ \|e^{-{\widetilde{c}_0}(\Lambda_1^2+\Lambda_2^2)\,t}\, u_0 \|_{L^2_{x_1x_2}}\ \big\|_{L_{x_3}^2}
	+C\|e^{ -{ c_3}t } \ \widehat{ u}_0\|_{L^2}\nonumber\\
	&\le C(1+t)^{-\f{1}{2}}(\|u_0\|_{L_{x_3}^2L_{x_1x_2}^1}+\|u_0\|_{L^2}),
	\end{align}
where we have used the fact  $e^{-{ c_3}t}(1+t)^{m}\le C({ c_3},m)$ for any $m\ge 0$.
For the third term, according to the upper bound for $\widehat{Q}_1(t)$,
\begin{align}\label{Iu3}
	&\int_0^t\|\widehat{Q}_1(t-\tau)\widehat{N}_1(\tau)\|_{L^2{(\R^3)}}\,d\tau
	\le\int_0^t\|\widehat{Q}_1(t-\tau)\widehat{M}_1(\tau)\|_{L^2{(\R^3)}}\,d\tau\nonumber\\
	&\quad\le C\int_0^t \|e^{-{\widetilde{ c}_0}|\xi_h|^2(t-\tau)}\widehat{M}_1(\tau)\|_{L^2{(\R^3)}}\,d\tau
	+C\int_0^t e^{-{ c_3}(t-\tau)}\|\widehat{M}_1(\tau)\|_{L^2{(\R^3)}}\,d\tau\nonumber\\
    &\quad= C\int_0^{t-1} \|e^{-{\widetilde{ c}_0}|\xi_h|^2(t-\tau)}\widehat{M}_1(\tau)\|_{L^2{(\R^3)}}\,d\tau
	+ C\int_{t-1}^{t} \|e^{-{\widetilde{ c}_0}|\xi_h|^2(t-\tau)}\widehat{M}_1(\tau)\|_{L^2{(\R^3)}}\,d\tau\nonumber\\
	&\qquad+C\int_0^t e^{-{ c_3}(t-\tau)}\|M_1(\tau)\|_{L^2{(\R^3)}}\,d\tau,
\end{align}
where $ M_1=b\cdot \na b-u\cdot \na u$ and we have used the fact that the projection operator $\mathbb{P}$ is bounded in $L^2$.
Observing the simple facts, for any positive number  $m$, 
$$
 (1+t-\tau)^{-m}\ge 2^{-m}\,\, \text{for}\,\, {\tau\in[t-1,t]}\quad\mbox{and}\quad   e^{-{ c_3}t}(1+t)^{m}\le C({ c_3},m)\,\,\text{for}\, t>0,
$$
we have 
$$
\int_{t-1}^{t} \|e^{-{\widetilde{ c}_0}|\xi_h|^2(t-\tau)}\widehat{M}_1(\tau)\|_{L^2{(\R^3)}}\,d\tau
\le 2^m \int_{t-1}^{t} (1+t-\tau)^{-m} \|\widehat{M}_1(\tau)\|_{L^2{(\R^3)}}\,d\tau.
$$
Then (\ref{Iu3}) can be further bounded as
\begin{align}\label{Iu31}
	&\int_0^t\|\widehat{Q}_1(t-\tau)\widehat{N}_1(\tau)\|_{L^2{(\R^3)}}\,d\tau\nonumber\\
	&\le C\int_0^{t-1} \|e^{-{\widetilde{ c}_0}|\xi_h|^2(t-\tau)}\widehat{M}_1(\tau)\|_{L^2{(\R^3)}}\,d\tau
	+ C\int_{0}^{t} (1+t-\tau)^{-m}\|\widehat{M}_1(\tau)\|_{L^2{(\R^3)}}\,d\tau.
\end{align}
Next we bound the terms on the right side in (\ref{Iu31}). It suffices to estimate the integral involving $u\cdot\na u$. The integral of $b\cdot\na b$ admits the same bound.
As in (\ref{Bu0}), we have
\begin{align*}
	\int_0^{t-1} \|e^{-{\widetilde{ c}_0}|\xi_h|^2(t-\tau)} \widehat {u\cdot \na u}(\tau) \|_{L^2{(\R^3)}}\,d\tau
    \le C \int_0^{t} (1+t-\tau)^{-\f 12}  \| u\cdot \na u(\tau)\|_{L_{x_3}^2L_{x_1x_2}^1} \,d\tau.
\end{align*}
By (\ref{fg}), 
\begin{align}\label{Eu}
\| u\cdot \na u\|_{L_{x_3}^2L_{x_1x_2}^1}
\le  C\|u_h\|_{L^2}^{\f 12} \|\p_3u_h\|_{L^2}^{\f 12}  \|\na_h u\|_{L^2}
+ C \|u_3\|_{L^2}^{\f 12} \|\p_3u_3\|_{L^2}^{\f 12} \|\p_3u\|_{L^2}.
\end{align}
By Lemma {\ref{lem53}}, 
\begin{align}\label{Eut}
	&\int_0^{t-1} \|e^{-{\widetilde{ c}_0}|\xi_h|^2(t-\tau)} \widehat {u\cdot \na u}(\tau) \|_{L^2{(\R^3)}}\,d\tau\nonumber\\
&\le C \sup_{0\le \tau \le t} (1+\tau)^{\f 14}\|u_h(\tau)\|_{L^2}^{\f 12}  (1+\tau)^{\f 1 4-\f 12 \varepsilon}\|\p_3u_h(\tau)\|_{L^2}^{\f 12}
     (1+\tau)\|\na_h u(\tau)\|_{L^2}\nonumber\\
    &\quad\times \int_0^{t} (1+t-\tau)^{-\f 12} (1+\tau)^{-\f 3 2+\f 12 \varepsilon}\,d\tau\nonumber\\
 & + C \sup_{0\le \tau \le t} (1+\tau)^{\f 14}\|u_3(\tau)\|_{L^2}^{\f 12}  (1+\tau)^{\f 1 2}\|\p_3u_3(\tau)\|_{L^2}^{\f 12}
     (1+\tau)^{\f 1 2- \varepsilon}\|\p_3 u(\tau)\|_{L^2}\nonumber\\
     &\quad\times \int_0^{t} (1+t-\tau)^{-\f 12} (1+\tau)^{-\f 5 4+ \varepsilon}\,d\tau\nonumber\\
&\le C E_2(t) (1+t)^{-\f 12}.
\end{align}
Applying H\"{o}lder's inequality and Sobolev's inequality, the second integral involving $u\cdot \na u$ in (\ref{Iu31}) can be bounded as
\begin{align*}
\int_{0}^{t}& (1+t-\tau)^{-m}\|u\cdot \na u(\tau)\|_{L^2{(\R^3)}}\,d\tau
\le C \int_0^t (1+t-\tau)^{-m}\|u(\tau)\|_{L^4}\|\na u(\tau)\|_{L^4}\,d\tau \\
&\le C \int_0^t (1+t-\tau)^{-m}\|u(\tau)\|_{L^2}^{\f 14}\|\na u(\tau)\|_{L^2}\|\na^2 u(\tau)\|_{L^2}^{\f 34}\,d\tau \\
&\le CE_2(t)\int_0^{t} (1+t-\tau)^{-m} (1+\tau)^{-\f {13} {16}+ \varepsilon}\,d\tau
\le  CE_2(t)(1+t)^{-\f 12},
\end{align*}
where $m>1$. As a consequence, we have
   \begin{align*}
	\int_0^t\|\widehat{Q}_1(t-\tau)\widehat {u\cdot \na u}(\tau)\|_{L^2{(\R^3)}}\,d\tau
	\le C(1+t)^{-\f 12} E_2(t).
  \end{align*}
Thereby, we infer
   \begin{align}\label{Bu3}
	\int_0^t\|\widehat{Q}_1(t-\tau)\widehat{N}_1(\tau)\|_{L^2{(\R^3)}}\,d\tau
	\le C(1+t)^{-\f 12} E_2(t).
   \end{align}
The second term and the fourth term admit the similar bound 
as $(\ref{Bu0})$  and (\ref{Bu3}), respectively. Therefore, we can conclude
   \begin{align*}
	(1+t)^{\f 12}\|u(t)\|_{L^2}
	\le C\Big(E(t)+\|(u_0,b_0)\|_{L_{x_3}^2L_{x_1x_2}^1}+\|(u_0,b_0)\|_{L^2}\Big),
   \end{align*}
which means
   \begin{align*}
	(1+t)\|u(t)\|_{L^2}^2
	\le C\Big(E^2(t)+\|(u_0,b_0)\|_{L_{x_3}^2L_{x_1x_2}^1}^2+\|(u_0,b_0)\|_{L^2}^2\Big).
   \end{align*}
Also, $\|b\|_{L^2}$ obeys the same bound. This complete the proof of Lemma \ref{lem55}.
\end{proof}

\vskip .1in
\begin{lemma} \label{lem56}
Let $(u,b)$ be a solution  to (\ref{MHD1}). Then we have
\begin{align}\label{E22}
E_{22}(t)&\le C E^2(t)
+C\big(\|(u_0,b_0)\|_{L_{x_3}^2L_{x_1x_2}^1}^2+\|(\p_3u_0,\p_3b_0)\|_{L_{x_3}^2L_{x_1x_2}^1}^2+\|(\na u_0,\na b_0)\|_{L^2}^2\big).
\end{align}
\end{lemma}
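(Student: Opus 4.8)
The plan is to follow the scheme already used for Lemma~\ref{lem55}: read off the representation (\ref{uin})--(\ref{bin}), apply the relevant derivative, and control each resulting term by combining the kernel bounds of Proposition~\ref{prop4} with the heat-type decay of Lemma~\ref{lem52} and the convolution estimate of Lemma~\ref{lem53}. By the structural symmetry between (\ref{uin}) and (\ref{bin}) (with $\widehat{Q}_1$ and $\widehat{Q}_3$ interchanged), it suffices to bound $\|\na_h u(t)\|_{L^2}$ and $\|\p_3 u(t)\|_{L^2}$; the bounds for $b,\na_h b,\p_3 b$ then follow verbatim. As in Lemma~\ref{lem55} we may assume $t>1$, since for $0\le t\le 1$ the claim follows from Proposition~\ref{pro1}. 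In every Duhamel term we first replace $N_i$ by $M_i$ using the pointwise domination by the Leray symbol (bounded on $L^2$), exactly as in Lemma~\ref{lem55}.

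\emph{The weighted $\na_h u$ estimate.} Applying $\na_h$ to (\ref{uin}) produces kernels $|\xi_h|\,\widehat{Q}_i(t-\tau)$. By Proposition~\ref{prop4}, on $A_1,A_{21},A_{22}$ one has $|\xi_h|\,|\widehat{Q}_i(t)|\le C|\xi_h|e^{-c|\xi_h|^2t}$, and on $A_{23}$ an extra summand bounded by $|\xi_h|\,e^{-c_2\xi_1^2t-c_3t}$, which we handle by moving the horizontal derivative onto the data (resp. nonlinearity) and using $e^{-c_3t}(1+t)^m\le C$; this is where $\|\na u_0\|_{L^2}$ enters. For the piece $|\xi_h|e^{-c|\xi_h|^2t}$, Lemma~\ref{lem52} with $\alpha=1$, $\beta=2$, $d=2$ (acting on $x_1,x_2$ with $x_3$ frozen), together with the trivial $L^2$ bound for small times, converts it into the factor $(1+t)^{-1}$ against the $L^2_{x_3}L^1_{x_1x_2}$ norm; thus the linear terms obey $\|\na_h\widehat{Q}_1(t)\widehat{u}_0\|_{L^2}+\|\na_h\widehat{Q}_2(t)\widehat{b}_0\|_{L^2}\le C(1+t)^{-1}(\|(u_0,b_0)\|_{L^2_{x_3}L^1_{x_1x_2}}+\|(\na u_0,\na b_0)\|_{L^2})$. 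For the Duhamel terms we split $\int_0^t=\int_0^{t-1}+\int_{t-1}^t$. On $[0,t-1]$ we keep $|\xi_h|e^{-c|\xi_h|^2(t-\tau)}$ and estimate $\|M_1(\tau)\|_{L^2_{x_3}L^1_{x_1x_2}}$ by (\ref{fg}) as in (\ref{Eu}), crucially using $\p_3u_3=-\p_1u_1-\p_2u_2$ to replace $\|\p_3u_3\|_{L^2}$ by $\|\na_h u\|_{L^2}$; inserting the decay rates stored in $E_2(t)$ gives a time-integrand decaying faster than $(1+\tau)^{-1}$, so Lemma~\ref{lem53} returns $(1+t)^{-1}$. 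On $[t-1,t]$ we bound $|\xi_h|e^{-c|\xi_h|^2(t-\tau)}\le C(t-\tau)^{-\f12}$ and control $\|M_1(\tau)\|_{L^2}$ by the anisotropic product inequalities of Lemma~\ref{lem21} (again exploiting the divergence-free relation), after which $\int_{t-1}^t(t-\tau)^{-\f12}\,d\tau<\infty$ produces an $O((1+t)^{-1})$ contribution. Altogether $(1+t)^2\|\na_h u(t)\|_{L^2}^2\le CE^2(t)+C(\|(u_0,b_0)\|_{L^2_{x_3}L^1_{x_1x_2}}^2+\|(\na u_0,\na b_0)\|_{L^2}^2)$.

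\emph{The weighted $\p_3 u$ estimate.} Since none of the kernels $\widehat{Q}_i$ decays in $\xi_3$, every vertical derivative must be transferred onto the data or the nonlinearity: we write $\widehat{\p_3 u}(t)=\widehat{Q}_1(t)\widehat{\p_3 u}_0+\widehat{Q}_2(t)\widehat{\p_3 b}_0+\int_0^t(\widehat{Q}_1(t-\tau)\widehat{\p_3 N}_1(\tau)+\widehat{Q}_2(t-\tau)\widehat{\p_3 N}_2(\tau))\,d\tau$. For the linear terms, Proposition~\ref{prop4} gives $|\widehat{Q}_i(t)|\le Ce^{-c|\xi_h|^2t}+(\text{exponentially small})$, and Lemma~\ref{lem52} with $\alpha=0,\beta=2,d=2$ yields the decay $(1+t)^{-\f12}(\|\p_3 u_0\|_{L^2_{x_3}L^1_{x_1x_2}}+\|\p_3u_0\|_{L^2})$, which is why the hypothesis records $\|(\p_3 u_0,\p_3 b_0)\|_{L^2_{x_3}L^1_{x_1x_2}}$. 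For the Duhamel terms we again split at $t-1$: on $[0,t-1]$ the heat factor contributes $(1+t-\tau)^{-\f12}$ against $\|\p_3 M_1(\tau)\|_{L^2_{x_3}L^1_{x_1x_2}}$, which we expand by the Leibniz rule and bound term by term with (\ref{fg}) and the other inequalities of Lemma~\ref{lem21}; on $[t-1,t]$ we use $(1+t-\tau)^{-m}$ with $m>1$ against $\|\p_3 M_1(\tau)\|_{L^2}$, controlled by Sobolev interpolation. Feeding in the decay rates collected in $E_2(t)$, the convolution of Lemma~\ref{lem53} now lands in the borderline regime $s_2=1$ (or slightly below), producing a logarithmic factor that we absorb into $(1+t)^{\varepsilon}$ --- this loss is exactly why the target exponent is $\f12-\varepsilon$ rather than $\f12$. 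Combining the contributions gives $(1+t)^{1-2\varepsilon}\|\p_3 u(t)\|_{L^2}^2\le CE^2(t)+C(\|(u_0,b_0)\|_{L^2_{x_3}L^1_{x_1x_2}}^2+\|(\p_3u_0,\p_3b_0)\|_{L^2_{x_3}L^1_{x_1x_2}}^2+\|(\na u_0,\na b_0)\|_{L^2}^2)$, which together with the $\na_h$ estimate yields (\ref{E22}).

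\emph{Main obstacle.} The delicate point is the vertical-derivative estimate. Because the system carries no dissipation in $x_3$ and the kernels are completely flat in $\xi_3$, the operator $\p_3$ generates no decay by itself; it must be absorbed into $u_0,b_0$ and into the quadratic nonlinearities, which forces us to control mixed norms such as $\|\p_3(u\cdot\na u)\|_{L^2_{x_3}L^1_{x_1x_2}}$ through the anisotropic inequalities of Lemma~\ref{lem21}. Arranging the bookkeeping --- which decay rate from $E_2(t)$ is assigned to each factor (with systematic use of $\p_3u_3=-\p_1u_1-\p_2u_2$), so that the time convolutions in Lemma~\ref{lem53} are integrable and the weighted norms close at exactly the prescribed rates --- is the technical heart of the argument; the horizontal-derivative estimate is comparatively routine once Proposition~\ref{prop4} and Lemmas~\ref{lem52}--\ref{lem53} are available.
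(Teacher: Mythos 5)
Your proposal follows essentially the same route as the paper: differentiate the Duhamel representation, invoke the kernel bounds of Proposition \ref{prop4} together with Lemmas \ref{lem52}--\ref{lem53}, transfer $\p_3$ onto the data and the nonlinearity (with the borderline $s_2=1$ convolution correctly identified as the source of the $\varepsilon$ loss), and split the time integral at $t-1$ with the nonlinearity measured in $L^2_{x_3}L^1_{x_1x_2}$ via (\ref{fg}). The only deviation is that on $[t-1,t]$ you keep the horizontal derivative on the kernel and pay $(t-\tau)^{-1/2}$ against $\|M_1(\tau)\|_{L^2}$, which then requires the sharper anisotropic bound $\|u\cdot\na u\|_{L^2}\le C E(t)(1+\tau)^{-1}$ from (\ref{Ine3}) and the divergence-free condition (the crude $L^4$--$L^4$ estimate only gives $(1+\tau)^{-13/16+\varepsilon}$), whereas the paper instead shifts the derivative onto the nonlinearity and integrates $(1+t-\tau)^{-m}\|\p_2 M_1(\tau)\|_{L^2}$; both variants close.
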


\vskip .1in
\begin{proof}[Proof of Lemma \ref{lem56}]
By differentiating (\ref{uin}) and (\ref{bin}), we have, for $i=1,2,3$,
	\ben
    \widehat{\p_i u}(\xi, t) &=& \widehat{Q}_1(t)\widehat{\p_i u}_0+\widehat{Q}_2(t)\widehat{\p_i b}_0\notag\nonumber\\
	&& +\int_0^t\big(\widehat{Q}_1(t-\tau)\widehat{\p_i N}_1(\tau)
	+\widehat{Q}_2(t-\tau)\widehat{\p_i N}_2(\tau)\big)\,d\tau, \nonumber\\
	\widehat{\p_i b}(\xi, t) &=& \widehat{Q}_2(t)\widehat{\p_i u}_0+\widehat{Q}_3(t)\widehat{\p_i b}_0 \notag \nonumber\\
	&& +\int_0^t\big(\widehat{Q}_2(t-\tau)\widehat{\p_i N}_1(\tau)
	+\widehat{Q}_3(t-\tau)\widehat{\p_i N}_2(\tau)\big)\,d\tau. \nonumber
	\een
As in the proof of Lemma \ref{lem55}, we focus on the $\|\p_i u(t)\|_{L^2}$. Clearly,
 \begin{align}
 \|\p_iu(t)\|_{L^2(\R^3)}&=\|\widehat{\p_iu}( t)\|_{L^2(\R^3)}
	\le\|\widehat{Q}_1(t)\widehat{\p_iu}_0\|_{L^2(\R^3)}+\|\widehat{Q}_2(t)\widehat{\p_ib}_0\|_{L^2(\R^3)}\nonumber\\
	&+\int_0^t\|\widehat{Q}_1(t-\tau)\widehat{\p_iN}_1(\tau)\|_{L^2(\R^3)}\,d\tau
	+\int_0^t\|\widehat{Q}_2(t-\tau)\widehat{\p_iN}_2(\tau)\|_{L^2(\R^3)}\,d\tau\nonumber\\
 &:=H_{i1}+H_{i2}+H_{i3}+H_{i4}.\label{iuL2}
 \end{align}
It suffices to bound $H_{i1}$ and $H_{i3}$ in (\ref{iuL2}). $H_{i2}$ and $H_{i4}$ share similar estimates as $H_{i1}$ and $H_{i3}$, respectively.

  \vskip .1in
  ${\bf(1)}$ \,${\bf i=1}$ or ${\bf i=2}$ .
  \vskip .1in
 We focus on the case $i=2$. The case $i=1$ is similar. By Proposition \ref{prop4}, Lemma \ref{lem52} and Minkowski's inequality, 
   \begin{align}\label{H21}
	H_{21}
	&\le C \|e^{-{\widetilde{c}_0}|\xi_h|^2t}\,\widehat{\p_2u}_0 \ \|_{L^2{(\R^3)}}
	+C\|e^{ -{ c_3}t } \ \widehat{\p_2 u}_0\|_{L^2{(\R^3)}}\nonumber\\
    &= C \Big\|\, \|e^{-{\widetilde{c}_0}\Lambda_2^2t}\,e^{-{\widetilde{c}_0}\Lambda_1^2t}  \p_2u_0 \|_{L^2_{x_2}} \ \Big\|_{L_{x_1x_3}^2}
	+C e^{ -{ c_3}t }\| \ \p_2 u_0\|_{L^2}\nonumber\\
    &\le C(1+t)^{-\f{3}{4}}\Big\| \, \|e^{-{\widetilde{c}_0}\Lambda_1^2t}  u_0 \|_{L^1_{x_2}} \ \Big\|_{L_{x_1x_3}^2}
	+C(1+t)^{-1}\|\p_2u_0\|_{L^2}\nonumber\\
    &\le C(1+t)^{-\f{3}{4}}\big\| \,\|e^{-{\widetilde{c}_0} \Lambda_1^2t}u_0\|_{L_{x_1}^2} \big\|_{ L_{x_3}^2L_{x_2}^1}
	+C(1+t)^{-1}\|\p_2u_0\|_{L^2}\nonumber\\
	&\le C(1+t)^{-1}(\|u_0\|_{L_{x_3}^2L_{x_1x_2}^1 }+\|\p_2u_0\|_{L^2}).
	\end{align}
Similarly,
 \begin{align}\label{H22}
	H_{22}
    \le C(1+t)^{-1}(\|b_0\|_{L_{x_3}^2L_{x_1x_2}^1}+\|\p_2b_0\|_{L^2}).
	\end{align}
For $H_{23}$, similarly to (\ref{Iu31}), we first bound it by
\begin{align}\label{IH13}
	H_{23}
	&\le C\int_0^{t-1} \|e^{-{\widetilde{ c}_0}|\xi_h|^2(t-\tau)}\widehat{\p_2M}_1(\tau)\|_{L^2{(\R^3)}}\,d\tau\notag\\
	&\quad + C\int_{0}^{t} (1+t-\tau)^{-m}\|\p_2M_1(\tau)\|_{L^2{(\R^3)}}\,d\tau,
\end{align}
where $M_1=b\cdot \na b-u\cdot \na u$. We  consider the first term involving $u\cdot\na u$ in (\ref{IH13}).
Firstly,  from the estimates (\ref{Eu}) and (\ref{Eut}), we obtain
\begin{align*}
\int_0^{t-1} &\|e^{-{\widetilde{ c}_0}|\xi_h|^2(t-\tau)}\widehat{\p_2(u\cdot\na u)}(\tau)\|_{L^2{(\R^3)}}\,d\tau
\le \int_0^{t}(1+t-\tau)^{-1} \|u\cdot\na u (\tau)\|_{L_{x_3}^2L_{x_1x_2}^1}\,d\tau\nonumber\\
&\le CE_2(t)  \int_0^{t} (1+t-\tau)^{-1} \Big[(1+\tau)^{-\f 32+\f 12\varepsilon}
     + (1+\tau)^{-\f {5} {4}+ \varepsilon}\Big]\,d\tau\nonumber\\
&\le  CE_2 (t) (1+t)^{-1}.
\end{align*}
For the second term in (\ref{IH13}), it follows from H\"{o}lder's inequality and Sobolev's inequality that
\begin{align}\label{1u}
\|\p_2(u\cdot\na u)\|_{L^2}
&\le \|\p_2u\|_{L^2}\|\na u\|_{L^{\infty}} + \|u_h\|_{L^{\infty}}\|\p_2\na_hu\|_{L^{2}} +\|u_3\|_{L^{4}}\|\p_2\p_3u\|_{L^{4}}\nonumber\\
&\le C (\|\p_2u\|_{L^2}\|\na^2u\|_{L^{2}}^{\f 12}\|\na^3u\|_{L^{2}}^{\f 12} + \|\na u_h\|_{L^2}^{\f 12}\|\na^2 u_h\|_{L^2}^{\f 12}\|\p_2\na_hu\|_{L^{2}}\nonumber\\
 &\quad+\|u_3\|_{L^{2}}^{\f 14} \|\na u_3\|_{L^{2}}^{\f 34} \|\p_2\p_3u\|_{L^{2}}^{\f 14}\|\na\p_2\p_3u\|_{L^{2}}^{\f 34}).
\end{align}
Therefore, for $m>2$, we derive
\begin{align}\label{T1u}
&\int_{0}^{t} (1+t-\tau)^{-m}\|\p_2(u\cdot\na u)(\tau)\|_{L^2{(\R^3)}}\,d\tau\nonumber\\
&\le C\sup_{0\le t\le \tau} (1+\tau)\|\p_2u(\tau)\|_{L^2} (1+\tau)^{\f 18}\|\na^2u(\tau)\|_{L^{2}}^{\f 12}\|\na^3u(\tau)\|_{L^{2}}^{\f 12}
     \int_{0}^{t} (1+t-\tau)^{-m}(1+\tau)^{-\f 9 8}\, d\tau\nonumber\\
& \,+ C\sup_{0\le t\le \tau} (1+\tau)^{\f 12(\f 12-\varepsilon)} \|\na u_h\|_{L^2}^{\f 12}  (1+\tau)^{\f 18 }\|\na^2 u_h\|_{L^2}^{\f 12} (1+\tau)^{\f 23-\varepsilon}\|\p_2\na_hu(\tau)\|_{L^{2}}\nonumber\\
&\, \quad   \times\int_{0}^{t} (1+t-\tau)^{-m}(1+\tau)^{-\f {25}{24}+\f 32\varepsilon}\, d\tau\nonumber\\
& \,+  C\sup_{0\le t \le \tau} (1+\tau)^{\f 18}\|u_3(\tau)\|_{L^{2}}^{\f 14}  (1+\tau)^{\f 34}\|\na u_3(\tau)\|_{L^{2}}^{\f 34}
           (1+\tau)^{\f 14(\f 23-\varepsilon)}\|\p_2\p_3u(\tau)\|_{L^{2}}^{\f 14}  \|\na^3u(\tau)\|_{L^{2}}^{\f 34}\nonumber\\
&   \quad\times\int_{0}^{t} (1+t-\tau)^{-m}(1+\tau)^{-\f {25}{24}+\f 14 \varepsilon}\, d\tau\nonumber\\
& \le C E_2^{\f 34}(t) E_0^{\f 14}(t)(1+t)^{-\f 98}+ C  E_2(t)(1+t)^{-\f {25}{24}+\f 32\varepsilon}
      + C E_2^{\f 58}(t) E_0^{\f 38}(t)(1+t)^{-\f {25}{24}+\f 14\varepsilon}\nonumber\\
&\le C E(t)(1+t)^{-1}.
\end{align}
Consequently,
\begin{align*}
	\int_0^t\|\widehat{Q}_1(t-\tau)\widehat{\p_2(u\cdot\na u)}(\tau)\|_{L^2{(\R^3)}}\,d\tau
	\le C E(t)(1+t)^{-1}.
\end{align*}
Similarly,
\begin{align*}
	\int_0^t\|\widehat{Q}_1(t-\tau)\widehat{\p_2(b\cdot\na b)}(\tau)\|_{L^2{(\R^3)}}\,d\tau
	\le C E(t)(1+t)^{-1}.
\end{align*}
Hence, 
\begin{align}\label{H23}
	H_{23}\le C E(t)(1+t)^{-1},
\end{align}
Similarly,
\begin{align}\label{H24}
	H_{24}\le C E(t)(1+t)^{-1}.
\end{align}
 (\ref{H21}), (\ref{H22}), (\ref{H23}) and (\ref{H24}) yield
\begin{align*}
	(1+t)\|\p_2u(t)\|_{L^2} \le C E(t) +C(\|(u_0,b_0)\|_{L_{x_3}^2L_{x_1x_2}^1}+\|(\p_2u_0,\p_2b_0)\|_{L^2}).
\end{align*}
Similarly,
\begin{align*}
	(1+t)\|\p_2b(t)\|_{L^2} \le C E(t) +C(\|(u_0,b_0)\|_{L_{x_3}^2L_{x_1x_2}^1}+\|(\p_2u_0,\p_2b_0)\|_{L^2}).
\end{align*}
For $i=1$, $\|(\p_1u, \p_1b)\|_{L^2}$ obeys a similar bound to $\|(\p_2u, \p_2b)\|_{L^2}$ with only a minor modification of (\ref{1u}) and (\ref{T1u}),
\begin{align*}
	(1+t)\|(\p_1u(t),\p_1b(t))\|_{L^2} \le C E(t) + C(\|(u_0,b_0)\|_{L_{x_3}^2L_{x_1x_2}^1}+\|(\p_1u_0,\p_1b_0)\|_{L^2}).
\end{align*}

 \vskip .1in
 ${\bf(2)}$ \,${\bf i=3}$
 \vskip .1in
 Invoking the estimate $(\ref{Bu0})$, we have
 \begin{align}\label{H31}
	H_{31}
	&\le C \|e^{-{\widetilde{c}_0}|\xi_h|^2t}\widehat{\p_3u}_0 \ \|_{L^2{(\R^3)}}
	+C\|e^{ -{ c_3}t } \ \widehat{ \p_3u}_0\|_{L^2{(\R^3)}}\nonumber\\
	&\le C(1+t)^{-\f{1}{2}}(\|\p_3u_0\|_{L_{x_3}^2L_{x_1x_2}^1}+\|\p_3u_0\|_{L^2}).
 \end{align}
 $H_{33}$ can be similarly estimated as $H_{23}$,
 \begin{align}\label{IH33}
	H_{33}
	&\le C\int_0^{t-1} \|e^{-{\widetilde{ c}_0}|\xi_h|^2(t-\tau)}\widehat{\p_3M}_1(\tau)\|_{L^2{(\R^3)}}\,d\tau\notag\\
	&\quad + C\int_{0}^{t} (1+t-\tau)^{-m}\|\p_3M_1(\tau)\|_{L^2{(\R^3)}}\,d\tau.
 \end{align}
Firstly, we have
 \begin{align*}
 \int_0^{t-1} \|e^{-{\widetilde{ c}_0}|\xi_h|^2(t-\tau)}\widehat{\p_3(u\cdot \na u)}(\tau)\|_{L^2{(\R^3)}}\,d\tau
 \le C \int_0^{t-1} (1+t-\tau)^{-\f 12 }\|\p_3(u\cdot \na u)(\tau)\|_{L_{x_3}^2L_{x_1x_2}^1} \,d\tau
 \end{align*}
 Applying the estimate (\ref{fg}) yields
 \begin{align}\label{B3u}
 \|\p_3(u\cdot \na u)\|_{L_{x_3}^2L_{x_1x_2}^1}
 &\le C(\|\p_3u_h\|_{L^2}^{\f 12} \|\p_3^2u_h\|_{L^2}^{\f 12}  \|\na_hu\|_{L^2}
 +\|\p_3u\|_{L^2}^{\f 12} \|\p_3^2u\|_{L^2}^{\f 12}  \|\p_3u_3\|_{L^2}\nonumber\\
 &+\|u_h\|_{L^2}^{\f 12} \|\p_3u_h \|_{L^2}^{\f 12}  \|\p_3\na_h u\|_{L^2}
 +\|u_3\|_{L^2}^{\f 12} \|\p_3u_3\|_{L^2}^{\f 12}  \|\p_3^2u\|_{L^2}).
\end{align}
As a consequence, we arrive at
\begin{align}\label{uH331}
&\int_0^{t-1} \|e^{-{\widetilde{ c}_0}|\xi_h|^2(t-\tau)}\widehat{\p_3(u\cdot \na u)}(\tau)\|_{L^2{(\R^3)}}\,d\tau\nonumber\\
&\le CE_2(t) \int_{0}^{t} (1+t-\tau)^{-\f 12}\big[(1+\tau)^{-\f {11} 8 + \f 12\varepsilon} +(1+\tau)^{-\f 7 6+\f 32\varepsilon } +(1+\tau)^{-1}\big]\, d\tau\nonumber\\
&\le C E(t)(1+t)^{-\f12+\varepsilon }.
\end{align}
To bound the second term in (\ref{IH33}), we apply H\"{o}lder's and Sobolev's inequalities to obtain
 \begin{align}\label{uH332}
&\int_{0}^{t} (1+t-\tau)^{-m}\|\p_3(u\cdot \na u)(\tau)\|_{L^2{(\R^3)}}\,d\tau\nonumber\\
&\le C\int_{0}^{t} (1+t-\tau)^{-m}(\|\p_3u(\tau)\|_{L^2} \|\na u(\tau)\|_{L^{\infty}}
+ \|u(\tau)\|_{L^{\infty}} \|\na\p_3 u(\tau)\|_{L^{2}} ) \,d\tau\nonumber\\
&\le C\int_{0}^{t} (1+t-\tau)^{-m}
(\|\p_3u(\tau)\|_{L^2} \|\na^2 u(\tau)\|_{L^2}^{\f 12}\|\na^3 u(\tau)\|_{L^2}^{\f 12}\nonumber \\
&\qquad \qquad\qquad \qquad
+ \|\na u(\tau)\|_{L^2}^{\f 12} \|\na^2 u(\tau)\|_{L^2}^{\f 12} \|\na\p_3 u(\tau)\|_{L^{2}} ) \,d\tau\nonumber\\
&\le C \Big(E_2^{\f 34}(t) E_0^{\f 14}(t)+E_2(t)\Big)
\int_{0}^{t} (1+t-\tau)^{-m}(1+\tau)^{-\f {5} 8+\varepsilon}\, d\tau\nonumber\\
&\le C E(t)(1+t)^{-\f 1 2}.
\end{align}
The estimates (\ref{uH331}) and (\ref{uH332}) then lead to
\begin{align*}
\int_0^{t} \| Q_1(t-\tau) \widehat{\p_3(u\cdot \na u)}(\tau)\|_{L^2{(\R^3)}}\,d\tau
\le  C E(t)(1+t)^{-\f 1 2+\varepsilon}.
\end{align*}
Therefore, 
\begin{align}\label{H33}
H_{33}\le  C E(t)(1+t)^{-\f 1 2+\varepsilon}.
\end{align}
Similarly,
\begin{align}\label{H3234}
H_{32}+H_{34}\le C\big(\|\p_3b_0\|_{L_{x_3}^2L_{x_1x_2}^1}+\|\p_3b_0\|_{L^2}+ E(t)\big)(1+t)^{-\f 1 2+\varepsilon}.
\end{align}
Finally, by the estimates (\ref{H31}), (\ref{H33}) and (\ref{H3234}), we conclude
\begin{align*}
	(1+t)^{\f 12-\varepsilon}\|\p_3u(t)\|_{L^2}
	\le  C E(t)+C(\|(\p_3u_0,\p_3b_0)\|_{L_{x_3}^2L_{x_1x_2}^1}+\|(\p_3u_0,\p_3b_0)\|_{L^2}).
\end{align*}
This completes the proof of Lemma \ref{lem56}.
\end{proof}

\vskip .1in
Next we bound $E_{23}(t)$, which involves the second-order derivatives of $(u,b)$.
\begin{lemma} \label{lem57}
Let $(u,b)$ be a solution  to (\ref{MHD1}). Then it holds
 \begin{align}\label{E23}
  E_{23}(t)&\le C E^2(t)+C\big(\|(u_0,b_0)\|_{L_{x_3}^2L_{x_1x_2}^1}^2
   +(\|(\p_3u_0,\p_3b_0)\|_{L_{x_3}^2L_{x_1x_2}^1}^2\nonumber\\
  &\quad+\|(\p_3^2u_0,\p_3^2b_0)\|_{L_{x_3}^2L_{x_1x_2}^1}^2
  +\|(\Delta u_0,\Delta b_0)\|_{L^2}^2).
 \end{align}
 \end{lemma}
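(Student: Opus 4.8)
The plan is to reproduce, at the level of two derivatives, the scheme already used for Lemma \ref{lem55} and Lemma \ref{lem56}. Applying $\p_i\p_j$ ($i,j=1,2,3$) to the integral representations (\ref{uin}) and (\ref{bin}) gives
\begin{align*}
\widehat{\p_i\p_j u}(\xi,t)&=\widehat{Q}_1(t)\widehat{\p_i\p_j u}_0+\widehat{Q}_2(t)\widehat{\p_i\p_j b}_0\\
&\quad+\int_0^t\big(\widehat{Q}_1(t-\tau)\widehat{\p_i\p_j N}_1(\tau)+\widehat{Q}_2(t-\tau)\widehat{\p_i\p_j N}_2(\tau)\big)\,d\tau,
\end{align*}
and the analogue for $\widehat{\p_i\p_j b}$ with $(Q_1,Q_2)$ replaced by $(Q_2,Q_3)$. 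Since the two representations have the same structure, it suffices to bound $\|\p_i\p_j u\|_{L^2}$, and within it the two terms carrying the kernel $\widehat{Q}_1$ (acting on the data and on $\widehat{\p_i\p_j N_1}$), the $\widehat{Q}_2$-terms obeying identical bounds; also, as in (\ref{E20}), we may assume $t>1$. We run through the pairs $(i,j)$ occurring in $E_{23}(t)$: $(1,k)$ and $(2,k')$ with $k=1,2$ and $k'=2,3$, together with $(1,3)$ and $(3,3)$.

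For the data terms we invoke Proposition \ref{prop4}: on $A_1\cup A_{21}\cup A_{22}$ one has $|\widehat{Q}_i(\xi,t)|\le Ce^{-c|\xi_h|^2t}$, while on $A_{23}$ there is the additional factor $e^{-c_2\xi_1^2t-c_3t}$. Combining the Gaussian bound with Lemma \ref{lem52} in the horizontal variables, each horizontal derivative absorbed by $e^{-c|\xi_h|^2t}$ together with the $L^1_{x_1x_2}\to L^2_{x_1x_2}$ smoothing yields one extra factor $(1+t)^{-1/2}$. Hence a pair with two horizontal indices ($\p_1\p_k$, $\p_2\p_2$) needs only $u_0\in L^2_{x_3}L^1_{x_1x_2}$ and decays like $(1+t)^{-3/2}$; a pair with one horizontal index ($\p_1\p_3$, $\p_2\p_3$) needs $\p_3u_0\in L^2_{x_3}L^1_{x_1x_2}$ and decays like $(1+t)^{-1}$; and $\p_3^2$ gains nothing from the kernel, so it needs $\p_3^2u_0\in L^2_{x_3}L^1_{x_1x_2}$ and decays like $(1+t)^{-1/2}$. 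The $e^{-c_3t}$-pieces are controlled by $\|\Delta u_0\|_{L^2}$ after absorbing any polynomial weight, using $e^{-c_3t}(1+t)^m\le C$. In every case the rate obtained is at least as fast as the one demanded by $E_{23}(t)$.

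For the nonlinear terms we split, exactly as in (\ref{Iu31}) and (\ref{IH13}),
\begin{align*}
\int_0^t\|\widehat{Q}_1(t-\tau)\widehat{\p_i\p_j N}_1(\tau)\|_{L^2}\,d\tau
&\le C\int_0^{t-1}\|e^{-c|\xi_h|^2(t-\tau)}\widehat{\p_i\p_j M}_1(\tau)\|_{L^2}\,d\tau\\
&\quad+C\int_0^t(1+t-\tau)^{-m}\|\p_i\p_j M_1(\tau)\|_{L^2}\,d\tau,
\end{align*}
with $M_1=b\cdot\na b-u\cdot\na u$ and $m$ large, the $e^{-c_3(t-\tau)}$-contribution being harmless. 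In the first integral, when one index is horizontal we move that derivative onto the heat kernel, $\|e^{-c|\xi_h|^2(t-\tau)}\Lambda_i\widehat{\p_j M}_1(\tau)\|_{L^2}\le C(1+t-\tau)^{-1}\|\p_j M_1(\tau)\|_{L^2_{x_3}L^1_{x_1x_2}}$, and estimate the $L^2_{x_3}L^1_{x_1x_2}$-norm of each product appearing in $\p_j(u\cdot\na u)$ and $\p_j(b\cdot\na b)$ via (\ref{fg}); this reduces everything to $L^2$-norms of first- and higher-order derivatives of $(u,b)$ whose decay is recorded in $E_2(t)$, together with $H^3$-bounds from $E_0(t)$, and Lemma \ref{lem53} then turns the time convolution into the claimed power of $(1+t)$. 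For $\p_3^2$ no horizontal derivative is available, so the kernel only contributes $(1+t-\tau)^{-1/2}$ and one estimates $\|\p_3^2 M_1\|_{L^2_{x_3}L^1_{x_1x_2}}$ directly through (\ref{fg}), using the divergence-free identity $\p_3 u_3=-\p_1 u_1-\p_2 u_2$ to place a horizontal derivative on $u$ wherever $\p_3 u_3$ occurs. The tail integral is handled by H\"older's and Sobolev's inequalities, bounding $\|\p_i\p_j M_1\|_{L^2}$ by an interpolation between the decaying low-order norms in $E_2(t)$ and the bounded $H^3$-norm in $E_0(t)$, and once more invoking Lemma \ref{lem53}. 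Summing over $u$, $b$ and all pairs $(i,j)$ gives (\ref{E23}).

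The main obstacle is the bookkeeping rather than any single hard estimate: the target rates in $E_{23}(t)$ are tight, so for each pair $(i,j)$ one must expand $\p_i\p_j(u\cdot\na u)$ into components, distribute the two derivatives onto the factors with the fastest $L^2$-decay, and verify that every convolution produced by Lemma \ref{lem53} falls in the regime $s_2>1$ (or $s_2=1$), yielding precisely the exponent claimed once $\varepsilon\le\f{1}{36}$ is imposed. The genuinely delicate cases are $\p_2\p_3$ and $\p_3^2$, where the kernels smooth only in the horizontal directions and the estimate closes only because of the anisotropic product bound (\ref{fg}) and the divergence-free structure.
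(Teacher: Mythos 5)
Your overall architecture matches the paper's (differentiate the Duhamel representation twice, Gaussian smoothing on $[0,t-1]$ measured in $L^2_{x_3}L^1_{x_1x_2}$, a separate tail term, products controlled by (\ref{fg}), (\ref{Ine3}) and the rates stored in $E_2(t)$), and your treatment of the data terms and of the first (Gaussian) integral is sound. But there is a genuine gap in your treatment of the tail term for the derivative pairs containing $\p_1$, namely $\p_1^2$, $\p_1\p_2$ and $\p_1\p_3$, which are exactly the components of $E_{23}(t)$ with the fastest target rates $(1+t)^{-\f52+2\varepsilon}$ and $(1+t)^{-2+2\varepsilon}$. You propose to bound
$\int_0^t(1+t-\tau)^{-m}\|\p_i\p_j M_1(\tau)\|_{L^2}\,d\tau$
with both derivatives on the nonlinearity. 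By Lemma \ref{lem53} this can only return the pointwise-in-time decay rate of $\|\p_i\p_j M_1\|_{L^2}$, and that quantity contains terms such as $u\cdot\na\p_1\p_2 u$ involving \emph{third-order} derivatives of $u$, which are merely bounded (via $E_0$) and carry no useful decay; the best one can extract from $\|u\|_{L^\infty}\lesssim\|u\|_{L^2}^{1/4}\|\na^2u\|_{L^2}^{3/4}$ and the rates in $E_2(t)$ is roughly $(1+t)^{-5/16}$, nowhere near $(1+t)^{-5/4+\varepsilon}$. So the convolution estimate cannot close for these pairs.

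The missing idea is that on $A_{23}$ the kernel bound is not just $e^{-c_3(t-\tau)}$ times a constant: it carries the factor $e^{-c_2\xi_1^2(t-\tau)}$, and the paper uses this (via Lemma \ref{lem52} in the $x_1$ variable) to absorb the $\p_1$ derivative, i.e.
$\|e^{-c_4\xi_1^2(t-\tau)}\widehat{\p_1\p_j M}_1\|_{L^2}\le C(t-\tau)^{-1/2}\|\p_j M_1\|_{L^2}$,
so that only \emph{one} derivative lands on $M_1$ in $L^2$; the $(t-\tau)^{-1/2}$ singularity is rendered harmless by $e^{-c_3(t-\tau)}$ (the Gamma-function/H\"older step around (\ref{Gamma})). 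With only $\p_2 M_1$ or $\p_3 M_1$ to estimate, the anisotropic inequality (\ref{Ine3}) delivers the rates $(1+\tau)^{-5/4+\varepsilon}$ and $(1+\tau)^{-1+\varepsilon}$ and the argument closes. Without this device your scheme works for $\p_2\p_2$, $\p_2\p_3$ and $\p_3^2$ (whose targets are slow enough to be met by two derivatives on $M_1$), but not for the $\p_1$-pairs, so the proof as proposed does not establish (\ref{E23}).
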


\vskip .1in
\begin{proof}[Proof of Theorem \ref{lem57}]
	First of all, we have, for $i,j=1,2,3$,
	\ben
    \widehat{\p_i\p_j u}(\xi, t) &=& \widehat{Q}_1(t)\widehat{\p_i\p_j u}_0+\widehat{Q}_2(t)\widehat{\p_i\p_j b}_0\notag\\
	&& +\int_0^t\big(\widehat{Q}_1(t-\tau)\widehat{\p_i\p_j N}_1(\tau)
	+\widehat{Q}_2(t-\tau)\widehat{\p_i\p_j N}_2(\tau)\big)\,d\tau, \label{uin2}\\
	\widehat{\p_i\p_j b}(\xi, t) &=& \widehat{Q}_2(t)\widehat{\p_i\p_j u}_0+\widehat{Q}_3(t)\widehat{\p_i\p_j b}_0 \notag \\
	&& +\int_0^t\big(\widehat{Q}_2(t-\tau)\widehat{\p_i\p_j N}_1(\tau)
	+\widehat{Q}_3(t-\tau)\widehat{\p_i\p_j N}_2(\tau)\big)\,d\tau. \notag
	\een
  Throughout the proof, we only show the bound of $\|\p_i\p_j u(t)\|_{L^2}$. The estimates for $\|\p_i\p_j b(t)\|_{L^2}$ can be obtained similarly.
  Taking the $L^2$ norm on both side of (\ref{uin2}), we have
	\begin{align*}
	\|\p_i\p_ju(t)\|_{L^2(\R^3)}&=\|\widehat{\p_i\p_ju}( t)\|_{L^2(\R^3)}
	\le\|\widehat{Q}_1(t)\widehat{\p_i\p_ju}_0\|_{L^2(\R^3)}+\|\widehat{Q}_2(t)\widehat{\p_i\p_jb}_0\|_{L^2(\R^3)}\nonumber\\
	&+\int_0^t\|\widehat{Q}_1(t-\tau)\widehat{\p_i\p_jN}_1(\tau)\|_{L^2(\R^3)}\,d\tau
	+\int_0^t\|\widehat{Q}_2(t-\tau)\widehat{\p_i\p_jN}_2(\tau)\|_{L^2(\R^3)}\,d\tau.\nonumber\\
	&=K_{ij1}+K_{ij2}+K_{ij3}+K_{ij4}.
    \end{align*}
   We focus on  $K_{ij1}$ and $K_{ij3}$. The bound for the other terms can be established in a similar way. The proof will be split into four cases: $i=1,j=1,2$;\;$i=1,j=3$;\; $i=2,j=2,3$;\; $i=j=3$.

   \vskip .1in
  ${\bf(1)}$ \,${\bf i=1,\,j=1,2}$.
  \vskip .1in
   It suffices to investigate the case $i=1,j=2$. The case  $i=1,j=1$ can be dealt with similarly. By Lemma \ref{lem52},
    \begin{align}\label{K121}
	K_{121}
	&\le C \|e^{-{\widetilde{c}_0}|\xi_h|^2t}|\xi_h|^2 \widehat{u}_0  \|_{L^2{(\R^3)}}
	+C\|e^{ -{ c_3}t } \ \widehat{ \p_1\p_2u}_0\|_{L^2{(\R^3)}}\nonumber\\
	&\le C(1+t)^{-\f{3}{2}}(\|u_0\|_{L_{x_3}^2L_{x_1x_2}^1}+\|\p_1\p_2u_0\|_{L^2}).
   \end{align}
Similarly,
   \begin{align}\label{K122}
	K_{122}
	\le C(1+t)^{-\f{3}{2}}(\|b_0\|_{L_{x_3}^2L_{x_1x_2}^1}+\|\p_1\p_2b_0\|_{L^2}).
   \end{align}
   For $K_{123}$, we first give a different bound from the ones in Lemma \ref{lem55} and Lemma \ref{lem56}.
   \begin{align*}
	K_{123}&\le C\int_0^{t-1} \|e^{-\widetilde{c}_0 |\xi_h|^2(t-\tau)}\widehat{\p_1\p_2 M}_1(\tau)\|_{L^2{(\R^3)}}\,d\tau
	+ C\int_{t-1}^{t} \|e^{-\widetilde{c}_0|\xi_h|^2(t-\tau)}\widehat{\p_1\p_2 M}_1(\tau)\|_{L^2{(\R^3)}}\,d\tau\nonumber\\
   & +  C\int_{0}^{t} e^{-c_3(t-\tau)}\|e^{-c_2 \xi_1^2(t-\tau)}\widehat{\p_1\p_2 M}_1(\tau)\|_{L^2{(\R^3)}}\,d\tau.
   \end{align*}
   For $\tau\in[t-1,t]$, we have $e^{-c_3(t-\tau)}\ge e^{-c_3}$ and thus 
   $$
   \int_{t-1}^{t} \|e^{-\widetilde{c}_0|\xi_h|^2(t-\tau)}\widehat{\p_1\p_2 M}_1(\tau)\|_{L^2{(\R^3)}}\,d\tau \le e^{c_3} \int_{t-1}^{t} e^{-c_3(t-\tau)} \|e^{-\widetilde{c}_0|\xi_h|^2(t-\tau)}\widehat{\p_1\p_2 M}_1(\tau)\|_{L^2{(\R^3)}}\,d\tau.
   $$
   As a consequence, for a constant $c_4>0$,
   \begin{align*}
	K_{123}&\le C\int_0^{t-1} \|e^{-\widetilde{c}_0 |\xi_h|^2(t-\tau)}\widehat{\p_1\p_2 M}_1(\tau)\|_{L^2{(\R^3)}}\,d\tau\\
    &\quad +  C\int_{0}^{t} e^{-c_3(t-\tau)}\|e^{-c_4 \xi_1^2(t-\tau)}\widehat{\p_1\p_2 M}_1(\tau)\|_{L^2{(\R^3)}}\,d\tau\\
   &:=K_{1231}+K_{1232}.
   \end{align*}
  Invoking (\ref{K121}), (\ref{Eu}) and (\ref{Eut}), we have
  \begin{align*}
  &\int_0^{t-1} \|e^{-{\widetilde{ c}_0}|\xi_h|^2(t-\tau)} \widehat {\p_1\p_2(u\cdot \na u)}(\tau) \|_{L^2{(\R^3)}}\,d\tau\\
  & \le C \int_0^{t} (1+t-\tau)^{-\f 32}\|u\cdot \na u (\tau) \|_{ L_{x_3}^2L_{x_1x_2}^1}\,d\tau\\
  &\le C E_2(t)  \int_0^{t} (1+t-\tau)^{-\f 32} \Big((1+\tau)^{-\f 3 2+\f 12 \varepsilon}+(1+\tau)^{-\f 5 4+ \varepsilon} \Big)            \,d\tau\\
  &\le C E (t)(1+t)^{-\f 5 4+\varepsilon}.
  \end{align*}
  Hence, 
 \begin{align}\label{K1231}
  K_{1231} \le C E(t) (1+t)^{-\f 5 4+ \varepsilon}.
 \end{align}
For $K_{1232}$, according to Lemma \ref{lem52}, we have
   \begin{align*}
	K_{1232}\le  C\int_{0}^{t} e^{-c_3(t-\tau)}(t-\tau)^{-\f 12 }\|\p_2 (u\cdot\na u-b\cdot\na b)(\tau)\|_{L^2{(\R^3)}}\,d\tau.
   \end{align*}
By the anisotropic inequality (\ref{Ine3}),
   \begin{align*}
   \|\p_2(u\cdot\na u)\|_{L^2{(\R^3)}}
   &\le C \|\p_2u\|_{L^2}^{\f 14} \|\p_2^2u\|_{L^2}^{\f 14} \|\p_2\p_3u\|_{L^2}^{\f 14} \|\p_2^2\p_3u\|_{L^2}^{\f 14}
   \|\na u\|_{L^2}^{\f 12}\|\p_1\na u\|_{L^2}^{\f 12}\\
   &+ C \|u\|_{L^2}^{\f 14} \|\p_1u\|_{L^2}^{\f 14} \|\p_2u\|_{L^2}^{\f 14} \|\p_1\p_2u\|_{L^2}^{\f 14}
    \|\na\p_2 u\|_{L^2}^{\f 12}\|\p_2\p_3\na u\|_{L^2}^{\f 12}.
   \end{align*}
  Hence,
  \begin{align*}
  &\int_{0}^{t} e^{-c_3(t-\tau)}(t-\tau)^{-\f 12 }\|\p_2 (u\cdot\na u)(\tau)\|_{L^2{(\R^3)}}\,d\tau\\
  &\le C E_2^{\f 78}(t) E_0^{\f 18}(t)  \int_0^{t}  e^{-c_3(t-\tau)}(t-\tau)^{-\f 12}\, (1+\tau)^{-\f 43 +\f 32\varepsilon }\,d\tau\nonumber\\
  &\quad+ C  E_2^{\f 34}(t) E_0^{\f 14}(t)  \int_0^{t}  e^{-c_3(t-\tau)}(t-\tau)^{-\f 12}\, (1+\tau)^{-\f {61 }{48} +\f 34\varepsilon }\,d\tau\\
  &\le C E(t)  \int_0^{t}  e^{-\f {c_3}{2}(t-\tau)}(t-\tau)^{-\f 12}\,
    (1+t-\tau)^{-m}(1+\tau)^{-\f {5 }{4} +\varepsilon }\,d\tau,
  \end{align*}
  where we have used the simple fact: $e^{-ct}(1+t)^m\le C(m)$ for any $t\ge 0, m\ge 0$.
Furthermore, selecting $m>2$, and then applying H\"{o}lder inequality with $1<p<2$ and $\f 1p+ \f 1q=1$, we infer
  \begin{align}\label{Gamma}
    &\int_{0}^{t} e^{-c_3(t-\tau)}(t-\tau)^{-\f 12 }\|\p_2(u\cdot \na u) (\tau)\|_{L^2{(\R^3)}}\,d\tau\nonumber\\
    &\le C E(t)
    \Big( \int_0^t e^{-\f {c_3p}{2}(t-\tau)}(t-\tau)^{-\f p 2}d\tau \Big)^{\f 1 p}\,
    \Big(\int_0^t (1+t-\tau)^{-mq}(1+\tau)^{(-\f 54+\varepsilon)q}d\tau\Big)^{\f 1 q}\nonumber\\
    & \le CE(t)(1+t)^{-\f 5 4 +\varepsilon}.
  \end{align}
  where we have used fact that the integration $\int_0^{\infty} x^{s-1} e^{-x}dx\, (s>0)$ converges to  $\Gamma(s)$.
 Consequently, 
   \begin{align}\label{K1232}
	K_{1232}\le  C E(t)(1+t)^{-\f 5 4 +\varepsilon}.
   \end{align}
(\ref{K1231}) and (\ref{K1232}) lead to
   \begin{align}\label{K124}
	K_{123}\le  C E(t)(1+t)^{-\f 5 4 +\varepsilon}.
   \end{align}
With a similar argument, we obtain
   \begin{align}\label{K123}
	K_{124}\le  C E(t)(1+t)^{-\f 5 4 +\varepsilon}.
   \end{align}
  Combining the estimates (\ref{K121}), (\ref{K122}), (\ref{K123}) and (\ref{K124}), we derive
   \begin{align*}
   (1+t)^{\f 5 4 -\varepsilon}\|\p_1\p_2u(t)\|_{L^2}
	\le C E(t)+C(\|(u_0,b_0)\|_{L_{x_3}^2L_{x_1x_2}^1}+\|(\p_1\p_2u_0,\p_1\p_2b_0)\|_{L^2}).
   \end{align*}
   Similarly, we can also obtain
    \begin{align*}
   (1+t)^{\f 5 4 -\varepsilon}\|\p_1^2u(t)\|_{L^2}
	\le C E(t)+C(\|(u_0,b_0)\|_{L_{x_3}^2L_{x_1x_2}^1}+\|(\p_1^2u_0,\p_1^2b_0)\|_{L^2}).
   \end{align*}

   \vskip .1in
  ${\bf(2)}$ \,${\bf i=1,j=3}$.
  \vskip .1in
   Firstly, from (\ref{H21}), we have
   \begin{align}\label{K131}
	K_{131}\le C(1+t)^{-1}(\|\p_3u_0\|_{L_{x_3}^2L_{x_1x_2}^1}+\|\p_1\p_3u_0\|_{L^2}).
   \end{align}
   For $K_{133}$, similarly to $K_{123}$, we first bound it as
  \begin{align*}
	K_{133}&\le C\int_0^{t-1} \|e^{-\widetilde{c}_0 |\xi_h|^2(t-\tau)}\widehat{\p_1\p_3 M}_1(\tau)\|_{L^2{(\R^3)}}\,d\tau\\
    &\quad +  C\int_{0}^{t} e^{-c_3(t-\tau)}\|e^{-c_4 \xi_1^2(t-\tau)}\widehat{\p_1\p_3 M}_1(\tau)\|_{L^2{(\R^3)}}\,d\tau\\
   &\le C\int_0^{t} (1+t-\tau)^{-1}\|\p_3 M_1(\tau)\|_{L_{x_3}^2L_{x_1x_2}^1}\,d\tau\\
    &\quad +  C\int_{0}^{t} e^{-c_3(t-\tau)}(t-\tau)^{-\f 12 }\|\p_3 M_1(\tau)\|_{L^2{(\R^3)}}\,d\tau\\
   &:=K_{1331}+K_{1332}.
   \end{align*}
   Invoking (\ref{B3u}) and (\ref{uH331}), we get
   \begin{align}\label{K1331}
   K_{1331}
   &\le CE_2(t) \int_{0}^{t} (1+t-\tau)^{-1}\Big[(1+\tau)^{-\f {11} 8 + \f 12\varepsilon} +(1+\tau)^{-\f 7 6+\f 32\varepsilon } +(1+\tau)^{-1}\,\Big] d\tau\nonumber\\
   &\le C E(t)(1+t)^{-1+\varepsilon }.
   \end{align}
   For $K_{1332}$,  by H\"{o}lder's inequality and Sobolev's inequality, we first have
   \begin{align*}
   \|\p_3(u\cdot\na u)\|_{L^2}
   &\le \|\p_3u_j\|_{L^4}\|\p_j u\|_{L^4}+\|u_j\|_{L^{\infty}}\|\p_j\p_3 u\|_{L^2}\\
   &\le C\|\p_3u_h\|_{L^2}^{\f 14}\|\p_3\na u_h\|_{L^2}^{\f 34}\|\na_h u\|_{L^2}^{\f 14}\|\na\na_h u\|_{L^2}^{\f 34}\\
        &\quad +C\|\p_3u_3\|_{L^2}^{\f 14}\|\p_3\na u_3\|_{L^2}^{\f 34}\|\p_3 u\|_{L^2}^{\f 14}\|\na\p_3 u\|_{L^2}^{\f 34}\\
        &\quad +C\|\na u_h\|_{L^2}^{\f 12}\|\na^2 u_{h}\|_{L^2}^{\f 12}\|\na_h\p_3 u\|_{L^2}
        +C\|\na u_3\|_{L^2}^{\f 12}\|\na^2 u_{3}\|_{L^2}^{\f 12}\|\p_3^2 u\|_{L^2}.
  \end{align*}
  Then, for $m>1$,
  \begin{align*}
    &\int_{0}^{t} e^{-c_3(t-\tau)}(t-\tau)^{-\f 12 }\|\p_3(u\cdot \na u) (\tau)\|_{L^2}\,d\tau\\
    &\le C  E_2 (t)\int_0^t e^{-c_3(t-\tau)}(t-\tau)^{-\f 12 }
    \Big[(1+\tau)^{-\f{17}{16}+\varepsilon}+ (1+\tau)^{-\f{25}{24}+\f 32\varepsilon} + (1+\tau)^{-\f{13}{12}+\f 12\varepsilon}\Big]  d\tau\\
    &\le C E(t) \int_0^t e^{-c_3(t-\tau)}(t-\tau)^{-\f 12 }(1+\tau)^{-1+\varepsilon}d\tau\\
    &\le C E (t)\int_0^t e^{-\f {c_3}{2}(t-\tau)}(t-\tau)^{-\f 12 }\, (1+t-\tau)^{-m}(1+\tau)^{-1+\varepsilon}d\tau\\
    &\le C E(t)(1+t)^{-1+\varepsilon},
  \end{align*}
 where we have used a similar derivation with $(\ref{Gamma})$ for the last inequality.
 Thus, we get
   \begin{align*}
    K_{1332}\le C E (t)(1+t)^{-1+\varepsilon}.
   \end{align*}
  which, together with (\ref{K1331}), gives
  \begin{align}\label{K133}
    K_{133}\le C E (t)(1+t)^{-1+\varepsilon}.
   \end{align}
 Therefore, by (\ref{K131}) and (\ref{K133}),  we conclude
  \begin{align*}
  (1+t)^{1-\varepsilon}\|\p_1\p_3u(t)\|_{L^2}
	\le CE(t)+C(\|(\p_3u_0,\p_3b_0)\|_{L_{x_3}^2L_{x_1x_2}^1}+\|(\p_1\p_3u_0,\p_1\p_3b_0)\|_{L^2}).
  \end{align*}

  \vskip .1in
  ${\bf(3)}$ \,${\bf i=2,j=2,3}$.
  \vskip .1in
   It suffices to bound $\|\p_2\p_3 u\|_{L^2}$. Firstly, a similar argument with (\ref {H21}) yields
   \begin{align}\label{K231}
	K_{231}
	&\le C \|e^{-{\widetilde{c}_0}|\xi_h|^2t}\widehat{\p_2\p_3u}_0 \ \|_{L^2{(\R^3)}}
	+C\|e^{ -{ c_3}t } \ \widehat{ \p_2\p_3u}_0\|_{L^2{(\R^3)}}\nonumber\\
	&\le C(1+t)^{-1}(\|\p_3u_0\|_{L_{x_3}^2L_{x_1x_2}^1}+C\|\p_2\p_3u_0\|_{L^2}).
   \end{align}
   As in $H_{23}$, $K_{233}$ is firstly bounded by
   \begin{align*}
	K_{233}
	&\le C\int_0^{t-1} \|e^{-{\widetilde{ c}_0}|\xi_h|^2(t-\tau)}\widehat{\p_2\p_3 M}_1(\tau)\|_{L^2{(\R^3)}}\,d\tau\\
	&\quad + C\int_{0}^{t} (1+t-\tau)^{-m}\|\p_2\p_3 M_1(\tau)\|_{L^2{(\R^3)}}\,d\tau\nonumber\\
    &\le C\int_0^{t} (1+t-\tau)^{-1 }\|\p_3 M_1(\tau)\|_{L_{x_3}^2L_{x_1x_2}^1}\,d\tau\\
	&\quad + C\int_{0}^{t} (1+t-\tau)^{-m}\|\p_2\p_3 M_1(\tau)\|_{L^2{(\R^3)}}\,d\tau\nonumber\\
    &:=K_{2331}+K_{2332}.
   \end{align*}
   Now we estimate $K_{2331}$. Recalling the bound (\ref{K1331}) gives
   \begin{align}\label{K2331}
   K_{2331}\le C E_2(t)(1+t)^{-1+\varepsilon }.
   \end{align}
   By  H\"{o}lder's inequality and Sobolev's inequality, 
   \begin{align*}
   \|\p_2\p_3(u\cdot\na u)\|_{L^2}
   &\le \|\p_2\p_3u\cdot\na u\|_{L^2}+\|\p_2u\cdot\na\p_3u\|_{L^2}+\|\p_3u\cdot\na\p_2 u\|_{L^2}+\|u\cdot\na\p_2\p_3u\|_{L^2}\\
   &\le  \|\na u\|_{\infty}\|\na\p_2 u\|_{L^2}+\|\p_2 u\|_{L^4}\|\na\p_3 u\|_{L^4}+\|u\cdot\na\p_2\p_3u\|_{L^2}\\
   &\le C \|\na^2 u\|_{L^2}^{\f 12}\|\na^3 u\|_{L^2}^{\f 12}\|\na\p_2 u\|_{L^2}
   +\|\p_2 u\|_{L^2}^{\f 14}\|\p_2\na u\|_{L^2}^{\f 34}\|\na\p_3 u\|_{L^2}^{\f 14}\|\na^2\p_3 u\|_{L^2}^{\f 34}\\
   &+\|u\|_{L^2}^{\f 14}\|\p_2u\|_{L^2}^{\f 14}\|\p_3u\|_{L^2}^{\f 14}\|\p_2\p_3u\|_{L^2}^{\f 14}\|\na\p_2\p_3 u\|_{L^2}^{\f 12}\|\na\p_1\p_2\p_3 u\|_{L^2}^{\f 12},
  \end{align*}
  where we have used the anisotropic inequality (\ref{Ine3}) for $\|u\cdot\na\p_2\p_3u\|_{L^2}$.
  Thus,
   \begin{align*}
   &\int_{0}^{t} (1+t-\tau)^{-m}\|\p_2\p_3 (u\cdot \na u)(\tau)\|_{L^2{(\R^3)}}\,d\tau \\
   &\le C\int_0^t (1+t-\tau)^{-m} \Big(\|\na^2 u\|_{L^2}^{\f 12}\|\na^3 u\|_{L^2}^{\f 12}\|\na\p_2 u\|_{L^2}
   +\|\p_2 u\|_{L^2}^{\f 14}\|\p_2\na u\|_{L^2}^{\f 34}\|\na\p_3 u\|_{L^2}^{\f 14}\|\na^2\p_3 u\|_{L^2}^{\f 34}\Big)\, d\tau\\
   &\quad+C\int_0^t (1+t-\tau)^{-m}\|u\|_{L^2}^{\f 14}\|\p_2u\|_{L^2}^{\f 14}\|\p_3u\|_{L^2}^{\f 14}\|\p_2\p_3u\|_{L^2}^{\f 14}\|\na\p_2\p_3 u\|_{L^2}^{\f 12}\|\na\p_1\p_2\p_3 u\|_{L^2}^{\f 12}\, d \tau\\
   &:=L_1+L_2.
  \end{align*}
  By means of (\ref{Ine5}), for $m>1$, we infer
  \begin{align*}
  L_1
  &\le C \sup_{0\le\tau\le t} (1+\tau)^{\f 23-\varepsilon}\|\na\p_2 u(\tau)\|_{L^2}\|\na^2u(\tau)\|_{H^1}
         \int_0^t (1+t-\tau)^{-m} (1+\tau)^{-\f23+\varepsilon} d\tau\\
  &+ \sup_{0\le\tau\le t} (1+\tau)^{\f 14}\|\p_2u(\tau)\|_{L^2}^{\f 14} (1+\tau)^{\f 34(\f 23-\varepsilon)} \|\p_2\na u(\tau)\|_{L^2}^{\f 34}
   \|\na^2u(\tau)\|_{H^1} \\
   &\qquad \times \int_0^t (1+t-\tau)^{-m} (1+\tau)^{-\f 34+\f 34\varepsilon} d\tau\\
   &\le E_2^{\f 12}(t)E_{0}^{\f 12}(t)(1+t)^{-\f 23+\varepsilon}
    \le C E(t)(1+t)^{-\f 23+\varepsilon}.
  \end{align*}
  For $L_2$, applying H\"{o}lder's inequality yields, for $m>1$,
  \begin{align*}
  L_2
  &\le C \sup_{0\le\tau\le t} (1+\tau)^{\f 18}\|u(\tau)\|_{L^2}^{\f 14}  (1+\tau)^{\f 14}\|\p_2u(\tau)\|_{L^2}^{\f 14}
                    (1+\tau)^{\f 18-\f 14\varepsilon}\|\p_3u(\tau)\|_{L^2}^{\f 14}\\
                   &\qquad \times  (1+\tau)^{\f 16-\f 14\varepsilon}\|\p_2\p_3u(\tau)\|_{L^2}^{\f 14}\\
        &\qquad\times \|\na\p_2\p_3 u(\tau)\|_{L^2}^{\f 12} \int_0^t (1+t-\tau)^{-m} (1+\tau)^{-\f23+\f 12\varepsilon}
        \|\na\p_1\p_2\p_3 u(\tau)\|_{L^2}^{\f 12} d\tau\\
  &\le CE_2^{\f 12}(t) E_0^{\f 14}(t) \Big(\int_0^t (1+t-\tau)^{-\f 43m} (1+\tau)^{-\f 8 9+\f 23\varepsilon} d\tau\Big)^{\f 34}
        \Big( \int_0^t \|\na\p_1\p_2\p_3 u(\tau)\|_{L^2}^{2} d\tau\Big)^{\f 14}\\
  & \le  CE_2^{\f 12}(t)E_0^{\f 12}(t) (1+t)^{-\f23+\f 12\varepsilon}\le C E(t)(1+t)^{-\f23+\varepsilon}.
  \end{align*}
Therefore,
  \begin{align*}
  \int_{0}^{t} (1+t-\tau)^{-m}\|\p_2\p_3 (u\cdot \na u)(\tau)\|_{L^2{(\R^3)}}\,d\tau
  \le  C E(t) (1+t)^{-\f23+\varepsilon}.
  \end{align*}
Thus, 
  \begin{align*}
   K_{2332}&\le  C E(t) (1+t)^{-\f23+\varepsilon},
  \end{align*}
which, together with (\ref{K2331}), gives
  \begin{align}\label{K233}
   K_{233}&\le  C E(t) (1+t)^{-\f23+\varepsilon}.
  \end{align}
 $K_{232}$ and  $K_{234}$ can be bounded with  similar arguments as  those for $K_{231}$ and $K_{233}$, respectively. Therefore, by (\ref{K231}) and (\ref{K233}), we conclude
  \begin{align*}
   (1+t)^{\f23-\varepsilon}\|\p_2\p_3 u(t)\|_{L^2}
   &\le  C E(t)+ C(\|(\p_3u_0,\p_3b_0)\|_{L_{x_3}^2L_{x_1x_2}^1}
   +\|(\p_2\p_3u_0,\p_2\p_3b_0)\|_{L^2}).
  \end{align*}
Similarly, 
  \begin{align*}
   (1+t)^{\f23-\varepsilon}\|\p_2^2 u(t)\|_{L^2}
   &\le  C E(t)+ C(\|(u_0,b_0)\|_{L_{x_3}^2L_{x_1x_2}^1}
   +\|(\p_2^2u_0,\p_2^2b_0)\|_{L^2}).
  \end{align*}

   \vskip .1in
  ${\bf(4)}$ \,${\bf i=j=3}$.
  \vskip .1in
  Firstly, we have
   \begin{align}\label{K331}
	K_{331}
	&\le C \|e^{-{\widetilde{c}_0}|\xi_h|^2t}\widehat{\p_3^2u}_0 \ \|_{L^2{(\R^3)}}
	+C\|e^{ -{ c_3}t } \ \widehat{ \p_3^2u}_0\|_{L^2{(\R^3)}}\nonumber\\
	&\le C(1+t)^{-\f{1}{2}}(\|\p_3^2u_0\|_{L_{x_3}^2L_{x_1x_2}^1}+\|\p_3^2u_0\|_{L^2}).
   \end{align}
   $K_{233}$ can be bounded as
   \begin{align*}
	K_{333}
	&\le C\int_0^{t-1} \|e^{-{\widetilde{ c}_0}|\xi_h|^2(t-\tau)}\widehat{\p_3^2 M}_1(\tau)\|_{L^2{(\R^3)}}\,d\tau
	+ C\int_{0}^{t} (1+t-\tau)^{-m}\|\p_3^2 M_1(\tau)\|_{L^2{(\R^3)}}\,d\tau\nonumber\\
    &\le C\int_0^{t} (1+t-\tau)^{-\f 12 }\|\p_3^2 M_1(\tau)\|_{L_{x_3}^2L_{x_1x_2}^1}\,d\tau
	+ C\int_{0}^{t} (1+t-\tau)^{-m}\|\p_3^2 M_1(\tau)\|_{L^2{(\R^3)}}\,d\tau\nonumber\\
    &:=K_{3331}+K_{3332}.
   \end{align*}
   We consider the integral
   \begin{align}\label{K3331u}
   \int_0^{t} (1+t-\tau)^{-\f 12 }\|\p_3^2(u\cdot\na u)(\tau)\|_{L_{x_3}^2L_{x_1x_2}^1}\,d\tau.
   \end{align}
   It follows from (\ref{fg}) that
   \begin{align}\label{Bu}
   &\|\p_3^2(u\cdot \na u)\|_{L_{x_3}^2 L_{x_1x_2}^1 }
   =\|\p_3^2u_j\,\p_j u + 2\p_3u_j\, \p_j\p_3u+u_j\,\p_j\p_3^2 u\|_{L_{x_3}^2 L_{x_1x_2}^1 }\nonumber\\
   &\le C(\|\p_3u\|_{L^2}^{\f 12} \|\p_3^2u\|_{L^2}^{\f 12}  \|\p_3^2u_3 \|_{L^2}
    +\|\na_h u\|_{L^2}^{\f 12} \|\p_3\na_h u\|_{L^2}^{\f 12}  \|\p_3^2u_h\|_{L^2}\nonumber\\
  &\quad+\|\p_3u_3\|_{L^2}^{\f 12} \|\p_3^2u_3\|_{L^2}^{\f 12}  \|\p_3^2u\|_{L^2}
    +\|\p_3 u_h\|_{L^2}^{\f 12} \|\p_3^2 u_h\|_{L^2}^{\f 12}  \|\p_3\na_hu\|_{L^2}\nonumber\\
   &\quad +\|u_3\|_{L^2}^{\f 12} \|\p_3u_3\|_{L^2}^{\f 12}  \|\p_3^3u\|_{L^2}
      +\|u_h\|_{L^2}^{\f 12} \|\p_3u_h\|_{L^2}^{\f 12}  \|\p_3^2\na_h u\|_{L^2})\nonumber\\
  &\le C(\|\na_h u\|_{L^2}^{\f 12} \|\p_3\na_h u\|_{L^2}^{\f 12}  \|\p_3^2u\|_{L^2}
    +\|\p_3 u\|_{L^2}^{\f 12} \|\p_3^2 u\|_{L^2}^{\f 12}  \|\p_3\na_hu\|_{L^2}\nonumber\\
   &\quad +\|u_3\|_{L^2}^{\f 12} \|\p_3u_3\|_{L^2}^{\f 12}  \|\p_3^3u\|_{L^2}
      +\|u_h\|_{L^2}^{\f 12} \|\p_3u_h\|_{L^2}^{\f 12}  \|\p_3^2\na_h u\|_{L^2}).
  \end{align}
  Inserting (\ref{Bu}) in (\ref{K3331u}), and using Lemma \ref{lem53}, the first three terms can be bounded by
  \begin{align*}
  &\int_0^t (1+t-\tau)^{-\f 12 }\big(\|\na_h u(\tau)\|_{L^2}^{\f 12} \|\p_3\na_h u(\tau)\|_{L^2}^{\f 12}  \|\p_3^2u(\tau)\|_{L^2}\\
  &+\|\p_3u(\tau)\|_{L^2}^{\f 12} \|\p_3^2u(\tau)\|_{L^2}^{\f 12}  \|\p_3\na_hu (\tau)\|_{L^2}\\
  &\quad+\|u_3(\tau)\|_{L^2}^{\f 12} \|\p_3u_3(\tau)\|_{L^2}^{\f 12}  \|\p_3^3u(\tau)\|_{L^2}\big) d\tau\\
  &\le C E_2(t)\int_0^t (1+t-\tau)^{-\f 12 }\Big((1+\tau)^{-\f{13}{12}+\f {\varepsilon}{2}}d\tau
  +(1+\tau)^{-\f{25}{24}+\f {3\varepsilon}{2}}\Big)d\tau\\
  &\quad+CE_2^{\f 12}(t)E_0^{\f 12}(t)\int_0^t (1+t-\tau)^{-\f 12 }(1+\tau)^{-\f{3}{4}}d\tau \\
  &\le C E (t)\Big((1+t)^{-\f 12 }+(1+t)^{-\f 14 }\Big).
  \end{align*}
 The last term needs more subtle estimates. We resort to H\"{o}lder's inequality and the integrability of $\|\p_3^2\na_h u\|_{L^2}$.
   \begin{align*}
  &\int_0^t (1+t-\tau)^{-\f 12 }\|u_h(\tau)\|_{L^2}^{\f 12} \|\p_3u_h(\tau)\|_{L^2}^{\f 12}  \|\p_3^2\na_h u(\tau)\|_{L^2}d\tau\\
  &\le C E_2^{\f 12}(t)\int_0^t (1+t-\tau)^{-\f 12 }(1+\tau)^{-\f{1}{2}+\f {\varepsilon}{2}}\|\p_3^2\na_h u(\tau)\|_{L^2}d\tau \\
  &\le C E_2^{\f 12}(t)  \Big(\int_0^t (1+t-\tau)^{-1 }(1+\tau)^{-1+\varepsilon}d\tau\Big)^{\f 12}
  \Big(\int_0^t\|\p_3^2\na_h u(\tau)\|_{L^2}^2d\tau\Big)^{\f 12} \\
  &\le C E_2^{\f 12}(t) E_0^{\f 1 2}(t)(1+t)^{-\f 14 }.
   \end{align*}
  Combining all the estimates above, we get
  \begin{align*}
  \int_0^{t} (1+t-\tau)^{-\f 12 }\|\p_3^2(u\cdot\na u)(\tau)\|_{L_{x_3}^2L_{x_1x_2}^1}\,d\tau
  \le C E(t)(1+t)^{-\f 14}.
  \end{align*}
  Thus,
  \begin{align}\label{K3331}
	K_{3331}
	\le C E(t)(1+t)^{-\f 14}.
   \end{align}
  Finally, applying  H\"{o}lder's inequality and Sobolev's inequality, for $m>1$, we infer
  \begin{align*}
  &\int_{0}^{t} (1+t-\tau)^{-m}\|\p_3^2(u\cdot \na u)(\tau)\|_{L^2{(\R^3)}}\,d\tau\\
  &\le C\int_{0}^{t} (1+t-\tau)^{-m}(\|\na^2 u(\tau)\|_{L^4} \|\na u(\tau)\|_{L^{4}}
  + \|u(\tau)\|_{L^{\infty}} \|\na\p_3^2 u(\tau)\|_{L^{2}} ) \,d\tau\nonumber\\
  &\le C\int_{0}^{t} (1+t-\tau)^{-m}
  ( \|\na^2 u(\tau)\|_{L^2}\|\na^3 u(\tau)\|_{L^2}^{\f 34}\|\na u(\tau)\|_{L^2}^{\f 14}\\
  &\qquad \qquad\qquad \qquad + \|\na u(\tau)\|_{L^2}^{\f 12} \|\na^2 u(\tau)\|_{L^2}^{\f 12} \|\na\p_3^2 u(\tau)\|_{L^{2}} ) \,d\tau\nonumber\\
  &\le C E_2^{\f 12}(t) E_0^{\f 12}(t)\Big(\int_{0}^{t} (1+t-\tau)^{-m}(1+\tau)^{-\f 1 4}d\tau
  + \int_{0}^{t} (1+t-\tau)^{-m}(1+\tau)^{-\f {3} 8+\f {\varepsilon}{2}}\, d\tau\Big)\nonumber\\
  &\le C  E(t)(1+t)^{-\f 1 4}.
  \end{align*}
 Thus, 
 \begin{align*}
  K_{3332}\le C   E(t)(1+t)^{-\f 1 4}.
  \end{align*}
which, together with (\ref{K3331}), yields
  \begin{align}\label{K333}
  K_{333}\le C   E(t)(1+t)^{-\f 1 4}.
  \end{align}
 As a consequence of (\ref{K331}) and (\ref{K333}), 
  \begin{align*}
	(1+t)^{\f 14}\|\p_3^2u(t)\|_{L^2}
	\le C E(t)+C(\|(\p_3^2u_0,\p_3^2b_0)\|_{L_{x_3}^2L_{x_1x_2}^1}+\|(\p_3^2u_0,\p_3^2b_0)\|_{L^2}).
  \end{align*}
 Combining all the estimates for the four cases above, we derive the desired estimate (\ref{E23}). This completes the proof of Lemma \ref{lem57}.
\end{proof}

\vskip .2in
Proposition \ref{prop3} then follows from the estimates (\ref{E20}), (\ref{E21}), (\ref{E22}) and (\ref{E23}). This completes the proof of Proposition \ref{prop3}.

\vskip .3in
\section*{\bf Acknowledgments}

Lin was partially supported by the National Natural Science Foundation of China (NNSFC) under Grant 11701049 and the China Postdoctoral Science Foundation under Grant 2017M622989.  Wu was partially supported by the National Science Foundation of the United States under grant DMS 2104682 and the AT\&T Foundation at Oklahoma State University. Zhu is partially supported by Shanghai Sailing Program under Grant 18YF1405500 and NNSFC under Grant 11801175.

\vskip .3in
\bibliographystyle{plain}

\end{document}